\newtheorem{theorem}{Theorem}
\newtheorem{lemma}[theorem]{Lemma}
\newtheorem{corollary}[theorem]{Corollary}
\newtheorem{remark}[theorem]{Remark}
\newtheorem{assumption}[theorem]{Assumption}
\newcommand*{\N}{\ensuremath{\mathbb{N}}}
\newcommand*{\Z}{\ensuremath{\mathbb{Z}}}
\newcommand*{\R}{\ensuremath{\mathbb{R}}}
\newcommand*{\C}{\ensuremath{\mathbb{C}}}
\renewcommand{\i}{\mathrm{i}}
\renewcommand{\phi}{\varphi}
\renewcommand{\rho}{{\varrho}}
\renewcommand{\epsilon}{{\varepsilon}}
\renewcommand{\d}[1]{\,\mathrm{d}#1 \,}
\newcommand{\J}{\mathcal{J}} 
\newcommand{\0}{{0}} 
\newcommand{\I}{{\mathcal{I}}}
\newcommand{\B}{{\mathcal{B}}}
\newcommand{\A}{{\mathcal{A}}}
\renewcommand{\I}{{\mathcal{I}}}
\renewcommand{\S}{\mathcal{S}}
\newcommand{\grad}{\nabla}
\newcommand{\W}{{W_{\hspace*{-1pt}{\Lambda}}}} 
\newcommand{\Wast}{{W_{\hspace*{-1pt}{\Lambda}^\ast}}} 
\newlength{\dhatheight}
\begin{document}

\sloppy

\title{Scattering problems from slightly perturbed periodic surfaces: Part II. High order numerical method}
\author{Ruming Zhang\thanks{Center for Industrial Mathematics, University of Bremen
; \texttt{rzhang@uni-bremen.de}}}
\date{}
\maketitle

\begin{abstract}
In this paper, we develop a high order numerical method for the numerical solutions of scattering problems with slightly perturbed periodic surfaces in two dimensional spaces. Based on the regularity property introduced in Part I, the decaying rate of the incident field could be transferred directly to the  total field for small perturbations. Thus the finite section method could reach a high accuracy rate. With the help of a  modification of the truncated problem, the problem is solved by a finite element method. The convergence of the finite element method is proved and numerical examples have been carried out to show the efficiency of the numerical scheme.
\end{abstract}

\section{Introduction}

Numerical simulations of scattering problems with rough surfaces are always challenging. The unbounded domain always needs to be truncated, and the error caused by the truncation depends greatly on the properties of the total fields. 
 This paper considers a special family of rough surface scattering problems, i.e., when the incident field satisfies certain conditions and the rough surface is a slightly perturbation of a periodic one.

Generally speaking, the slightly perturbed periodic surfaces could be treated as rough surfaces if the periodicities are ignored, thus there are some known method for the theoretical and numerical analysis of these problems. We refer to \cite{Chand1996c,Chand1999,Zhang2003,Chand2006a} for the integral equation method. An alternative way is to apply the variational method, see \cite{Chand2005,Chand2010}. The numerical methods for the rough surface scattering problems are always based on the decaying property of the total field (see \cite{Meier2000,Meier2001} for the numerical solutions of the integral equations and \cite{Chand2010} for the finite section method). Due to the limited decaying rate, the finite section method always converges slowly.

Until recent years, theoretical and numerical analysis based on the Floquet-Bloch transform provide another way to deal with these problems. With the help of this method, a couple of scattering problems with (locally perturbed) periodic background have been studied and efficient numerical methods have been developed. We would like to mention \cite{Coatl2012,Hadda2016} for the scattering problems from locally perturbed periodic media. With a domain transformation technique, the Bloch transform was applied to the scattering problems with locally perturbed periodic surfaces in \cite{Lechl2016}. Based on that, numerical methods have been developed, for 2D cases see \cite{Lechl2016a,Lechl2017} and for 3D cases see \cite{Lechl2016b}. The method has also been extended to globally perturbed problems, see \cite{Zhang2018c,Zhang2018d}. On the other hand, a high order numerical method has also been developed, see \cite{Zhang2017e}. It was also shown that, the smoothness of the Bloch transformed field with respect to the quasi-periodicity parameter depends on the incident field, except for the square-root like singularities brought by the Dirichlet-to-Neumann map.

The paper \cite{Zhang2018e} considers the scattering problems with globally perturbed periodic surfaces. It is proved that, when the incident field satisfies special conditions and the perturbation of the periodic surface is small enough, the regularity of the Bloch transformed field with respect to the quasi-periodicity parameter only depends on the incident field. This result provides a possibility to develop an efficient numerical method when the incident field is smooth enough. The problem is modified into an equivalent one due to the singularity of the Dirichlet-to-Neumann map, and the well-posedness of the new problem comes directly from the equivalence. The next step is to truncate the term defined in the unbounded domain. A high order convergence rate comes from the high regularity of the Bloch transformed field. Then a classical finite element method is adopted, and the convergence is proved for the numerical method.

The rest of this paper is organized as follows. In Section 2, we briefly recall the mathematical modal and well-posedness of the rough surface scattering problems, and the properties of the Bloch transformed problems. The third section recalls the regularity result from \cite{Zhang2018e}. In the fourth section, we show the classic finite element method introduced in \cite{Lechl2017} and its disadvantages. The fifth section shows the modified variational problems, with the help of change of variables. In the sixth section, we apply the finite section method for the truncation of the unbounded term, and estimate the error of the truncation. In the seventh section, we apply the finite element method to the truncation of the modified variational problem. In the last section, the numerical experiments are carried out to show the efficiency of the numerical scheme.

\section{Scattering problems from slightly perturbed periodic surfaces}

\subsection{Mathematical model and well-posedness}

In this section, the mathematical formulation of the scattering problems in two dimensional spaces is presented. For details we refer to \cite{Chand2005,Chand2010}.

\begin{figure}[H]
\centering
\includegraphics[width=15cm]{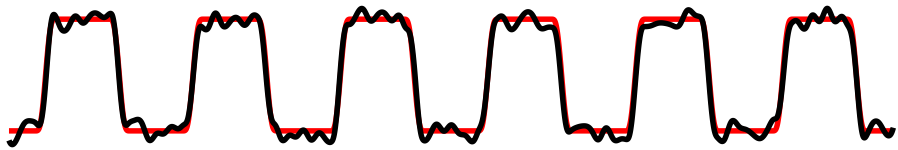}
\caption{Red curve: periodic surface; black curve: slightly perturbed periodic surface.}
\end{figure}

Let $\zeta$ be a bounded function defined in $\R$, and the surface $\Gamma\subset\R^2$ is defined by
\begin{equation*}
\Gamma:=\big\{x\in\R^2:\, x_2=\zeta(x_1)\text{ where }x_1\in\R\big\}.
\end{equation*}
The domain above the surface is defined by
\begin{equation*}
\Omega:=\left\{x\in\R^2:\,x_2>\zeta(x_1)\right\}.
\end{equation*} 
Suppose $H$ is a constant satisfies $H>\max_{t\in\R}\left\{\zeta(t)\right\}$, then $\Gamma_H:=\R\times\{H\}$ is a straight line lies above $\Gamma$. Define the domain above $\Gamma$. Define the domain with finite height:
\begin{equation*}
\Omega_H:=\left\{x\in\R^2:\,\zeta(x_1)<x_2<H\right\}.
\end{equation*}

We consider the following equations in $\Omega_H$:
\begin{eqnarray}
&& \Delta u+k^2 u=0\quad\text{ in }\Omega_H;\\
&& u=0\quad\text{ on }\Gamma;\\
&&\frac{\partial u}{\partial x_d}(\widetilde{x},H)=T^+\left[u\big|_{\Gamma_H}\right]+f\quad\text{ on }\Gamma_H,\label{eq:boundary_condition}
\end{eqnarray}
where $T^+$ is the Dirichlet-to-Neumann map defined by
\begin{equation}\label{eq:DtN}
T^+\phi=\frac{\i}{{2\pi}}\int_{\R} \sqrt{k^2-|{\xi}|^s}e^{\i  x_1\cdot{\xi}}\widehat{\phi}({\xi})\d{\xi}\quad\text{ for }\phi=\frac{1}{2\pi}\int_{\R} e^{\i x_1\cdot{\xi}}\widehat{\phi}({\xi})\d{\xi},
\end{equation}
and $f$ is defined by
\begin{equation}\label{eq:def_f}
f:=\frac{\partial u^i}{\partial x_2}(x_1,H)-T^+\left[u^i|_{\Gamma_H}\right]
\end{equation}
for the incident field $u^i$ that satisfies the Helmholtz equation in $\Omega$.

The weak formulation for the scattering problem is, given any $f$ in the weighted Sobolev space $H_r^{-1/2}(\Gamma_H)$, to find a solution $u\in\widetilde{H}_r^1(\Omega_H)$ such that
\begin{equation}\label{eq:var_origional}
\int_{\Omega_H}\left[\nabla u\cdot\nabla\overline{v}-k^2u\overline{v}\right]\d x-\int_{\Gamma_H}T^+\left[u|_{\Gamma_H}\right]\overline{v}\d s=\int_{\Gamma_H}f\overline{v}\d s,
\end{equation}
for all $v\in\widetilde{H}^1_r(\Omega_H)$ with compact support in $\overline{\Omega_H}$.
\begin{remark}
The tilde in $\widetilde{H}_r^1(\Omega_H)$ shows that the functions in this space belong to $H_r^1(\Omega_H)$ and satisfy homogeneous Dirichlet boundary condition on $\Gamma$. Similar notations are utilized for other spaces, e.g., $H_0^r(\Wast;\widetilde{H}_{\alpha}^s(\Omega^\Lambda_H))$.
\end{remark}

From \cite{Chand2010}, the unique solvability of the variational problem \ref{eq:var_origional} has been proved in weighted Sobolev spaces.
\begin{theorem}\label{th:solv}
If $\Gamma$ is Lipschitz continuous, $f\in H_r^{-1/2}(\Gamma_H)$ for $|r|<1$, then there is a unique solution $u\in\widetilde{H}_r^1(\Omega_H)$ for the variational problem \eqref{eq:var_origional}.
\end{theorem}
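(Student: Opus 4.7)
My plan is to split the argument naturally into two stages: first establish well-posedness in the unweighted space $\widetilde{H}^1(\Omega_H)$ (the case $r=0$), and then bootstrap to arbitrary $|r|<1$ by a weight-perturbation argument. For the unweighted case I would test \eqref{eq:var_origional} against $v=u$, extract the real and imaginary parts of the form, and use the sign properties of the DtN symbol $\sqrt{k^2-|\xi|^2}$ (real for $|\xi|<k$, positive imaginary for $|\xi|>k$) together with a Poincaré-type inequality on the slab $\Omega_H$ to obtain a Gårding inequality. This reduces existence to uniqueness via the Fredholm alternative. Uniqueness in turn follows from the characterisation of $T^+$ as the DtN map of the upward propagating radiation condition: a homogeneous solution must be a purely outgoing Rayleigh-type expansion above $\Gamma_H$, and a Rellich-type identity combined with the homogeneous Dirichlet condition on the Lipschitz boundary $\Gamma$ forces the solution to vanish.

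For general $|r|<1$, I would introduce the weight $w_r(x_1):=(1+x_1^2)^{r/2}$ and write $\tilde u=w_r u$, which is an isomorphism between $\widetilde H^1_r(\Omega_H)$ and $\widetilde H^1(\Omega_H)$. Rewriting \eqref{eq:var_origional} in terms of $\tilde u$ produces the same sesquilinear form plus commutator terms of the shape $[w_r,\nabla]$ in the volume and $[w_r,T^+]$ on the artificial boundary $\Gamma_H$. The derivative commutator is a zeroth-order multiplication operator of size $O(r\langle x_1\rangle^{-1})$, hence compact relative to the principal form; the DtN commutator must be analysed via the Fourier integral representation in \eqref{eq:DtN}, where a symbolic calculation shows $[w_r,T^+]$ has order one less than $T^+$ itself and is bounded from $H^{1/2}(\Gamma_H)$ into $L^2(\Gamma_H)$ with norm $O(|r|)$. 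Together these commutators constitute a compact perturbation of the coercive $r=0$ problem, so the Fredholm alternative still applies.

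The remaining step is uniqueness in the weighted space. Any homogeneous $\widetilde H^1_r$ solution also lies in $\widetilde H^1_0$ by the embedding $\widetilde H^1_r\hookrightarrow\widetilde H^1_0$ that holds for $r<1$ on the slab (via a cut-off and interpolation of the weight), and then uniqueness from the $r=0$ case closes the argument. Thus the Fredholm index zero combined with injectivity gives invertibility for every $|r|<1$.

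The main obstacle I anticipate is the commutator estimate $\|[w_r,T^+]\phi\|_{L^2(\Gamma_H)}\lesssim |r|\,\|\phi\|_{H^{1/2}(\Gamma_H)}$, because $T^+$ is non-local and its symbol has a square-root singularity at $|\xi|=k$. The cleanest way around it is to split the frequency integral in \eqref{eq:DtN} into a neighbourhood of $|\xi|=k$, where the singularity is integrable and contributes a compact remainder, and the complementary region where $\sqrt{k^2-|\xi|^2}-\i|\xi|$ is a smooth symbol of order zero whose commutator with $w_r$ gains a full derivative by the standard $[w_r,\partial_{x_1}]$ identity. This is the quantitatively delicate piece; everything else reduces to Fredholm theory and the Rellich-type uniqueness argument.
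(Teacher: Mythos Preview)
The paper does not actually prove this theorem: the statement is quoted verbatim from \cite{Chand2010} (Chandler--Wilde and Elschner), so there is no in-paper argument to compare against. Your two-stage strategy --- unweighted G{\aa}rding estimate plus Rellich-type uniqueness, then conjugation by the weight $w_r=(1+x_1^2)^{r/2}$ and commutator control --- is broadly the strategy of that reference, and your identification of $[w_r,T^+]$ as the delicate term is on target.

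There is, however, a genuine gap in your uniqueness step for negative weight index. You assert that a homogeneous $\widetilde H^1_r$ solution lies in $\widetilde H^1_0$ ``by the embedding $\widetilde H^1_r\hookrightarrow\widetilde H^1_0$ that holds for $r<1$.'' This embedding is only valid for $r\ge 0$; when $r<0$ the weighted space is strictly \emph{larger} than $\widetilde H^1_0$ (elements may grow like $|x_1|^{|r|}$), so uniqueness cannot be imported from the $r=0$ case in the way you describe. Your proposal also oscillates between two incompatible mechanisms: you note the commutators have norm $O(|r|)$, which would support a Neumann-series (small-perturbation) argument, but then treat them as a compact perturbation and appeal to Fredholm theory plus the flawed embedding. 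In \cite{Chand2010} the matter is settled by the former route: the conjugated problem differs from the $r=0$ problem by an operator of norm $O(|r|)$, so invertibility persists for $|r|$ below an explicit threshold without any separate uniqueness argument, and the full range $|r|<1$ is obtained by combining this with a-priori bounds and the duality between weights $r$ and $-r$. Without one of these devices your argument is incomplete for $-1<r<0$.
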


\subsection{The Bloch transformed problem}

In this subsection, we  apply the Bloch transform to  the scattering problems. Suppose $\Lambda>0$ and the surface $\zeta$ is $\Lambda$-periodic, and let the function $\zeta_p$ be a global perturbation of $\zeta$. Let the surface defined by $\zeta_p$ be denoted by $\Gamma_p$, and the domains $\Omega^p$ and $\Omega^p_H$ are defined in the same way.

 Let $\Lambda^*:=2\pi/\Lambda$, define the periodic cell and dual cell by
\begin{equation*}
\W:=\left(-\frac{\Lambda}{2},\frac{\Lambda}{2}\right];\quad
\Wast:=\left(-\frac{\Lambda^*}{2},\frac{\Lambda^*}{2}\right]=\left(-\frac{\pi}{\Lambda},\frac{\pi}{\Lambda}\right].
\end{equation*}
Now we can define $\Gamma^\Lambda_H$ and $\Omega^\Lambda_H$, which are  $\Gamma_H$ and $\Omega_H$ restricted in one periodic cell $\W\times\R$, i.e.,
\begin{equation*}
\Gamma^\Lambda_H=\Gamma_H\cap\left[\W\times\R\right],\quad \Omega^\Lambda_H=\Omega_H\cap\left[\W\times\R\right].
\end{equation*} 

 Let $\Theta_p$ be a diffeomorphism that maps $\Omega^p_{H_0}$ to $\Omega_{H_0}$ for some $\|\zeta\|_\infty<H_0<H$, and extend $\Theta_p$ by identity in $\R\times[H_0,\infty)$. Thus the support of $\Theta_p-I$ is contained in $\Omega_{H_0}$. Let the transformed total field $u_T:=u\,\circ\,\Theta_p$,  then by direct calculation, $u_T\in\widetilde{H}_r^1(\Omega_H)$   satisfies the following variational problem in the periodic domain $\Omega_H$
\begin{equation}\label{eq:var_T}
\int_{\Omega_H}\left[A_p\nabla u_T\cdot\nabla\overline{v_T}-k^2c_p u_T\overline{v_T}\right]\d { x}-\int_{\Gamma_H}T^+\left[u_T|_{\Gamma_H}\right]\overline{v_T}\d s=\int_{\Gamma_H}f\overline{v_T}\d s,
\end{equation}
for all $v_T:=v\circ \Theta\in\widetilde{H}^1(\Omega_H)$,  where
\begin{eqnarray*}
&& A_p({x}):=\left|\det\grad\Theta_p({ x})\right|\left[\left(\grad\Theta_p({ x})\right)^{-1}\left(\grad\Theta_p({ x})\right)^{-T}\right]\in L^\infty\left(\Omega_H,\R^{2\times 2}\right);\\
&& c_p(x):=\left|\det\grad\Theta_p({ x})\right|\in L^\infty(\Omega_H).
\end{eqnarray*}
Thus the supports of both $A_p-I_2$ and $c_p-1$ are subsets of $\Omega_{H_0}$.

Apply the the Bloch transform to \eqref{eq:var_T}, we arrive at the variational problem for $w=\J_{\Omega_H} u_T$ with test function $z={\J_{\Omega_H} {v_T}}$:
\begin{equation}\label{eq:var_Bloch}
\int_\Wast a_\alpha(w(\alpha,\cdot),z(\alpha,\cdot))\d\alpha+\, b(w,z)=\int_\Wast\int_{\Gamma^\Lambda_H}F(\alpha,x) \overline{z(\alpha,x)}\d{ s(x)}\d\alpha,
\end{equation}
where
\begin{eqnarray*}
 &&a_{\alpha}(w,z):=\int_{\Omega^\Lambda_H}\left[\nabla w\cdot\nabla \overline{z}-k^2 w\overline{z}\right]\d{ x}-\int_{\Gamma^\Lambda_H}T^+_{\alpha}(w)\overline{z}\d s,\\
 &&\begin{aligned}
b(w,z)&=\int_{\Omega_H}\Big[ (A_p-I_2)\grad( \J_{\Omega_H}^{-1} w)\cdot\grad\overline{\left(\J_{\Omega_H}^{-1} {z}\right)}-k^2 (c_p-1)(\J_{\Omega_H}^{-1}w)\cdot\overline{\left(\J_{\Omega_H}^{-1} {z}\right)}\Big]\d x,\\
&=\int_\Wast\int_{\Omega^\Lambda_H}\left[\J_{\Omega_H}\left[(A_p-I_2)\grad( \J_{\Omega_H}^{-1} w)\cdot\grad\overline{z}-k^2\J_{\Omega_H}\left[(c_p-1)( \J_{\Omega_H}^{-1} w)\right]\overline{z}\right]\right]\d x\d\alpha,
\end{aligned}\\
&& F(\alpha,x)=\left(\J_{\Gamma_H} f\right)(\alpha,x).
\end{eqnarray*}
Moreover, the operator $T^+_{\alpha}$ is the well-known ${\alpha}$-quasi-periodic Dirichlet-to-Neumann operator from $H^{1/2}_\alpha(\Gamma_H)$ to $H^{-1/2}_\alpha(\Gamma_H)$ defined by
\begin{equation*}
T_{\alpha}^+\phi=\i\sum_{{ j}\in\Z^{d-1}}\sqrt{k^2-|\Lambda^* j-{\alpha}|^2}\widehat{\phi}({ j})e^{\i(\Lambda^*{ j}-{\alpha})\cdot \widetilde{x}}\quad\text{ for }\phi=\sum_{{ j}\in\Z^{d-1}}\widehat{\phi}({ j})e^{\i(\Lambda^*{ j}-{\alpha})\cdot \widetilde{x}}.
\end{equation*}

The equivalence, well-posedness and regularity results are easily extended from locally perturbed cases (see \cite{Lechl2016,Lechl2017}), and the results have been proved in \cite{Zhang2018c}, and we list these results in the following theorem.

\begin{theorem}[Lemma 7, Theorem 8-10, \cite{Zhang2018c}]\label{th:well-posed}
Assume that $f\in H_r^{-1/2}(\Gamma_H)$ for $r\in(0,1)$ and $\zeta,\,\zeta_p$ are Lipschitz continuous functions.
\begin{enumerate}
\item $u\in \widetilde{H}^1_r(\Omega_H)$ satisfies \eqref{eq:var_origional} if and only if $w=\J_{\Omega_H}u_T\in H_0^r(\Wast;\widetilde{H}^1_\alpha(\Omega^\Lambda_H))$ satisfies \eqref{eq:var_Bloch} with $F(\alpha,\cdot)=\J_{\Omega_H}f$.
\item Given  any $F\in H_0^r(\Wast;H_\alpha^{-1/2}(\Gamma_H^\Lambda))$ for some $r\in[0,1)$,  the variational problem \eqref{eq:var_Bloch} has a unique solution in $H_0^r(\Wast;\widetilde{H}^1_\alpha(\Omega^\Lambda_H))$.
\item If $f\in H_r^{1/2}(\Gamma_H)$ and $\zeta\in C^{2,1}(\R)$, the solution $w\in H_0^r(\Wast;\widetilde{H}^2_\alpha(\Omega^\Lambda_H))$.
\item If $r\in(1/2,1)$, then the solution $w\in H_0^r(\Wast;\widetilde{H}^1_\alpha(\Omega^\Lambda_H))$ equivalently satisfies for all $\alpha\in\Wast$ and $z_\alpha\in\widetilde{H}_\alpha^1(\Omega^\Lambda_H)$ such that
\begin{equation}\label{eq:var_Bloch_continuous}
a_\alpha(w(\alpha,\cdot),z_\alpha)+ b(w(\alpha,\cdot),z_\alpha)=\int_{\Gamma^\Lambda_H}F(\alpha,x)\overline{z_\alpha(x)}\d s(x).
\end{equation}
\end{enumerate}
 
\end{theorem}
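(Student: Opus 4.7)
The plan is to prove the four items in order, exploiting the isometric character of the Bloch transform $\J_{\Omega_H}$ between $H_r^s(\Omega_H)$ and $H_0^r(\Wast;\widetilde{H}_\alpha^s(\Omega_H^\Lambda))$ together with standard quasi-periodic scattering theory on the bounded cell $\Omega_H^\Lambda$. For item~(1), the chain-rule computation already used to derive \eqref{eq:var_T} provides the equivalence with \eqref{eq:var_origional}; to descend to the Bloch picture I would plug in test functions $v_T=\J_{\Omega_H}^{-1}z$ with $z$ ranging over a dense subset of $H_0^0(\Wast;\widetilde{H}_\alpha^1(\Omega_H^\Lambda))$, then apply the Parseval identity for $\J_{\Omega_H}$ and the commutation property that turns $T^+$ into the $\alpha$-fibred family $T_\alpha^+$ on $\Gamma_H^\Lambda$. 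The interior integral splits cleanly into the fibred part $\int_{\Wast}a_\alpha(w,z)\d\alpha$ and the coupling piece $b(w,z)$, producing \eqref{eq:var_Bloch}; the reverse direction is obtained by inverting the same identities.

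For item~(2), I would run a Riesz--Fredholm argument directly on the Bochner space $H_0^r(\Wast;\widetilde{H}_\alpha^1(\Omega_H^\Lambda))$. The fibred part $\int_{\Wast}a_\alpha\d\alpha$ is Fredholm of index zero since each $a_\alpha$ is (coercivity modulo a compact term via Rellich on the bounded cell). The form $b$ is a compact perturbation because its integrand is supported in the bounded layer $\Omega_{H_0}$, so compact $x$-embeddings combined with continuity of $\J_{\Omega_H}^{\pm 1}$ yield compactness on the Bochner space. Uniqueness is then inherited from Theorem~\ref{th:solv}: any homogeneous solution $w$ would, by item~(1), correspond to $u\in\widetilde{H}_r^1(\Omega_H)$ solving \eqref{eq:var_origional} with zero data, hence vanish.

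For item~(3), I would invoke standard elliptic regularity on the bounded $C^{2,1}$ cell $\Omega_H^\Lambda$: the upgraded datum $F\in H_0^r(\Wast;H_\alpha^{1/2}(\Gamma_H^\Lambda))$ gives $w(\alpha,\cdot)\in\widetilde{H}_\alpha^2(\Omega_H^\Lambda)$ fibrewise with bound uniform in $\alpha$, which integrates to the $H_0^r(\Wast;\widetilde{H}_\alpha^2)$ estimate. For item~(4), the Sobolev--Bochner embedding $H_0^r(\Wast;X)\hookrightarrow C(\overline{\Wast};X)$ for $r>1/2$ makes pointwise evaluation of $w(\alpha,\cdot)$ meaningful; testing \eqref{eq:var_Bloch} against $z(\alpha,x)=\chi(\alpha)z_\alpha(x)$ with arbitrary $\chi\in C_c(\Wast)$ and $z_\alpha\in\widetilde{H}_\alpha^1(\Omega_H^\Lambda)$ and applying Lebesgue differentiation produces \eqref{eq:var_Bloch_continuous} for a.e.~$\alpha$, with continuity in $\alpha$ upgrading it to every $\alpha$.

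The principal obstacle is item~(2): unlike $a_\alpha$, the coupling term $b$ is not diagonal in $\alpha$, so the Fredholm machinery has to be carried out globally in $(\alpha,x)$ rather than fibrewise, and one must verify that the compactness afforded by the bounded $x$-support of $A_p-I_2$ and $c_p-1$ transports correctly through the $H_0^r(\Wast;\cdot)$ norm. Similarly, the exceptional $\alpha$ at which $a_\alpha$ alone is only Fredholm (not invertible) have to be absorbed via the global uniqueness supplied by Theorem~\ref{th:solv} in order to exclude a nontrivial kernel on the Bochner level.
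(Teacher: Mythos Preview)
The paper does not supply its own proof of this theorem: it is stated as a quotation of Lemma~7 and Theorems~8--10 of \cite{Zhang2018c}, with the remark that the arguments extend the locally perturbed case treated in \cite{Lechl2016,Lechl2017}. So there is no in-paper proof to compare against; your proposal is in effect an attempt to reconstruct the argument of the cited reference.

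Your treatment of items~(1), (3) and (4) is sound and matches the standard route (Parseval for the Bloch transform plus the diagonalisation $T^+\mapsto T_\alpha^+$; fibrewise elliptic regularity on the bounded cell; the embedding $H_0^r(\Wast;X)\hookrightarrow C(\overline{\Wast};X)$ for $r>1/2$).

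For item~(2), however, there is a genuine gap in the compactness step. You argue that $b$ is a compact perturbation ``because its integrand is supported in the bounded layer $\Omega_{H_0}$''. But $\Omega_{H_0}$ is only bounded in the $x_2$-direction; it is an infinite horizontal strip, and in the globally perturbed setting of this paper the supports of $A_p-I_2$ and $c_p-1$ are not compact. Consequently no Rellich-type embedding is available in physical space, and after Bloch transform the operator associated with $b$ is genuinely nonlocal in $\alpha$, so compactness in the joint variable does not follow from compactness on the bounded cell $\Omega_H^\Lambda$ alone. (Your argument would be correct in the locally perturbed case of \cite{Lechl2016,Lechl2017}, where those coefficients are compactly supported.)

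The cleaner route, and the one implicit in the paper's phrasing ``easily extended'', is to bypass a direct Fredholm argument on the Bochner space altogether: once item~(1) is established, well-posedness of \eqref{eq:var_Bloch} is \emph{equivalent} to well-posedness of \eqref{eq:var_T} (and hence of \eqref{eq:var_origional}) in $\widetilde{H}^1_r(\Omega_H)$, and the latter is exactly Theorem~\ref{th:solv}. Both existence and uniqueness then transfer through the isomorphism $\J_{\Omega_H}$, with no need to analyse the compactness of $b$ separately.
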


\section{Regularity  in Sobolev spaces}

\subsection{High order regularity}
In this section, we conclude some regularity properties of the Bloch transformed scattering problems from locally or globally perturbed periodic surfaces obtained in \cite{Zhang2018e}. The result from \cite{Chand2010}, only for $|r|<1$, the decaying rate of the incident field could be transferred to the total field. However, for some special cases, the result holds for $r\geq 1$ (see \cite{Zhang2018e}). In the following, we will consider these special cases and list the known results obtained in \cite{Zhang2017e,Zhang2018e}.

For convenience, we define the discrete set that depends on $k$ and $\Lambda^*$:
\begin{equation}
\S:=\big\{\alpha_0\in\overline{\Wast}:\,\exists\, j\in\Z \text{ such that }|\Lambda^*j-\alpha_0|=k\big\}.
\end{equation} 
It is a non-empty discrete set that has at most three points. From the analysis in \cite{Zhang2017e}, there are two different kinds of wave numbers that correspond to two kinds of representations of $\S$. Define the  real number $\underline{k}$ by
\begin{equation}
\underline{k}:=\min\{|\Lambda^*j-k|:\,j\in\Z\},
\end{equation}
then $0\leq\underline{k}\leq\Lambda^*/2$ and the points in $\S$ could be represented by $\underline{k}$:
\begin{itemize}
\item Case 1, $\underline{k}=m\Lambda^*/2$ for some $m=0,1$, then $\S=\{\underline{k}+\Lambda^*j:\,j\in\Z\}\cap\overline{\Wast}$;
\item Case 2, $\underline{k}\neq m\Lambda^*/2$ for any $m=0,1$, then $\S=\{\underline{k}+\Lambda^*j,\,-\underline{k}+\Lambda^*j:\,j\in\Z\}\cap\overline{\Wast}$.
\end{itemize}

As was shown in \cite{Zhang2018e}, the points in $\S$ are critical, as the $\alpha$-dependent quasi-periodic Dirichlet-to-Neumann map $T^+_\alpha$ has a square-root like singularity in the neighbourhood of these points.   

In \cite{Zhang2018e}, when the right hand side belongs to a closed subspace of $H_0^n(\Wast;\widetilde{H}^1_\alpha(\Gamma^\Lambda_H))$ for some $n\in\N$, the regularity of the solutions of \eqref{eq:var_Bloch} with respect to $\alpha$ only depends on $r$.

\begin{figure}[H]
\centering
\includegraphics[width=15cm]{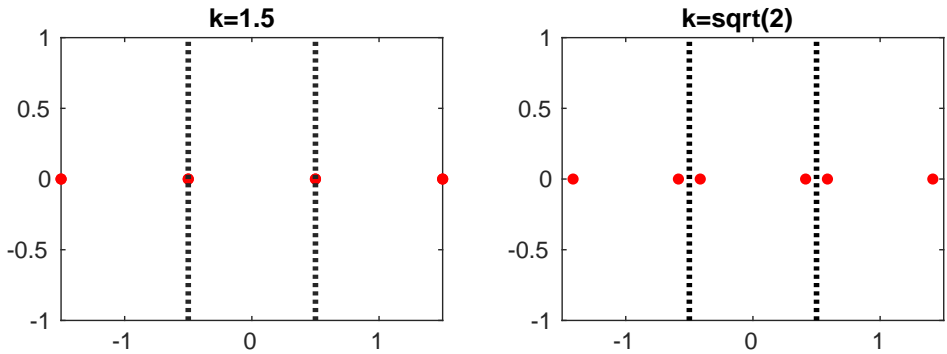}
\caption{Distribution of the set $\S$ for $\Lambda=2\pi$. Left: $k=1.5$; right: $k=\sqrt{2}$. $\Wast$ is the line $\R\times\{0\}$ between the black dotted lines.}
\end{figure}


The first theorem considers Bloch transformed problem \eqref{eq:var_Bloch}, when the perturbation of the periodic is small enough. First, we introduce the space $H_{00}^n(\Wast;\S;\widetilde{H}^1_\alpha(\Omega^\Lambda_H))$ by the closure of the set
\begin{equation}
\left\{\phi\in C^\infty(\Wast\times\Omega^\Lambda_H):\,\exists\delta>0,s.t.,\phi(\alpha,\cdot)=0\text{ for }\alpha\in \bigcup_{\alpha_0\in\S}B(\alpha_0,\delta)\cap\Wast\right\}
\end{equation}
with respect to the $H_0^n(\Wast;H^s_\alpha(\Omega^\Lambda_H))$-norm, where $B(\alpha_0,\delta)=(\alpha_0-\delta,\alpha_0+\delta)$. 

\begin{theorem}[Theorem 24, \cite{Zhang2018e}]
Suppose $\zeta,\,\zeta_p$ are Lipschitz continuous functions.  $\zeta$ is $\Lambda$-periodic and $\zeta_p$ is a slight perturbation of $\zeta$.  If $F\in H_{00}^n(\Wast;\S;H^{-1/2}_\alpha(\Gamma^\Lambda_H))$ for some $n\in\N$, then $w\in H_0^n(\Wast;\widetilde{H}^{1}_\alpha(\Omega^\Lambda_H))$.
\label{th:well-posed_high_order}
\end{theorem}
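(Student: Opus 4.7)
The plan combines two ingredients: (i) the real-analytic dependence of $T^+_\alpha$ on $\alpha \in \overline{\Wast}\setminus\S$, whose only square-root branch points lie in $\S$, and (ii) the smallness of the perturbation operator coming from $b$. Existence and uniqueness of $w$ are already guaranteed by Theorem \ref{th:well-posed}, so only the higher $\alpha$-regularity $w \in H_0^n(\Wast;\widetilde{H}^1_\alpha(\Omega^\Lambda_H))$ needs to be proved. By the definition of $H_{00}^n(\Wast;\S;\cdot)$, a density argument plus continuous dependence reduces the task to smooth $F$ supported in $\overline{\Wast}\setminus\bigcup_{\alpha_0\in\S}B(\alpha_0,\delta)$ for some fixed $\delta>0$.

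I would first handle the purely periodic case $\Theta_p = \mathrm{Id}$, in which $b \equiv 0$ and \eqref{eq:var_Bloch} decouples pointwise to
\begin{equation*}
a_\alpha(w(\alpha,\cdot),z_\alpha) = \int_{\Gamma^\Lambda_H} F(\alpha,x)\overline{z_\alpha(x)}\d{s(x)}, \qquad z_\alpha \in \widetilde{H}^1_\alpha(\Omega^\Lambda_H).
\end{equation*}
After conjugating by $e^{-\i\alpha x_1}$ to transfer trial and test spaces to the $\alpha$-independent periodic space, $a_\alpha$ becomes a $C^\infty$-valued function of $\alpha$ on any compact subset of $\overline{\Wast}\setminus\S$, together with a uniform Lax--Milgram inf--sup bound there. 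Differentiating the variational identity $n$ times in $\alpha$ yields a triangular system for $\partial_\alpha^k w$ whose principal part is this uniformly invertible operator, giving the estimate $\|w\|_{H^n_0(\Wast;\widetilde{H}^1_\alpha)} \le C\|F\|_{H^n_0(\Wast;H^{-1/2}_\alpha)}$.

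For the perturbed problem I would rewrite \eqref{eq:var_Bloch} as $(\mathcal{A}+\mathcal{B})w = F$ on $H_0^n(\Wast;\widetilde{H}^1_\alpha)$, with $\mathcal{A}$ the fiberwise operator induced by $a_\alpha$ and $\mathcal{B}$ the perturbation from $b$. Because $A_p-I_2$ and $c_p-1$ are compactly supported in $\Omega_{H_0}$, the Bloch image $\J_{\Omega_H}[(A_p-I_2)\grad\J_{\Omega_H}^{-1}w]$ acts as a convolution-like operator in $\alpha$ with smooth kernel, so $\mathcal{B}$ maps $H_0^n$ into itself with operator norm controlled by $\|A_p-I_2\|_\infty + \|c_p-1\|_\infty$. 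The slight-perturbation hypothesis makes this norm arbitrarily small, so the Neumann series $w = (\mathrm{Id}+\mathcal{A}^{-1}\mathcal{B})^{-1}\mathcal{A}^{-1}F$ converges in $H_0^n(\Wast;\widetilde{H}^1_\alpha)$, and the uniqueness in Theorem \ref{th:well-posed} identifies the sum with the actual solution.

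The principal obstacle is that, although $F\in H_{00}^n$ vanishes near $\S$, the iterate $\mathcal{B}w$ generally does not, so $\mathcal{A}^{-1}$ must be applied to right-hand sides outside $H_{00}^n$ at each Neumann step. Making the argument go through requires quantifying precisely how $\mathcal{B}$ smooths in $\alpha$ via the compact physical-space supports of $A_p-I_2$ and $c_p-1$, so that this gain of $\alpha$-regularity dominates the square-root singularity of $\mathcal{A}^{-1}$ near $\S$. The bookkeeping that makes this precise, involving the phase factors $e^{-\i\alpha x_1}$ and the interplay between the fiberwise norms and the direct-integral norm on $H_0^n(\Wast;\widetilde{H}^1_\alpha)$, is the most delicate part of the proof.
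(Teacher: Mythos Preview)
Your outline contains a genuine gap at exactly the point you flag as ``the most delicate part''. You write that $A_p-I_2$ and $c_p-1$ are ``compactly supported in $\Omega_{H_0}$'' and infer that $\mathcal{B}$ acts as an $\alpha$-convolution with smooth kernel. But in this paper $\zeta_p$ is a \emph{global} perturbation of $\zeta$; the set $\Omega_{H_0}$ is the horizontally unbounded strip $\{\zeta(x_1)<x_2<H_0\}$, so $A_p-I_2$ and $c_p-1$ have no horizontal decay whatsoever. The Bloch transform converts horizontal decay into $\alpha$-smoothness, not vertical confinement, so your mechanism for having $\mathcal{B}$ regain regularity lost to the square-root singularity of $\mathcal{A}^{-1}$ simply is not available. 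The operator $\mathcal{B}$ is bounded on $H_0^n(\Wast;\widetilde H^1_\alpha(\Omega^\Lambda_H))$ (via the isomorphism $\J_{\Omega_H}$ with the weighted spaces $H^1_n(\Omega_H)$), but it does \emph{not} improve $\alpha$-regularity, and $\mathcal{A}^{-1}$ applied to a generic element of $H_0^n$ falls out of $H_0^n$. Your Neumann iteration $(\mathcal{A}^{-1}\mathcal{B})^k\mathcal{A}^{-1}F$ therefore cannot be controlled in $H_0^n$ after the first step.

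The device actually used in \cite{Zhang2018e}, and recalled here around \eqref{eq:equivalence}, bypasses the singularity rather than trying to beat it. One introduces the smoothed Dirichlet-to-Neumann map $T^\epsilon_\alpha$ and the corresponding operator $\A^\epsilon$; by construction $\A^\epsilon$ depends smoothly on $\alpha$ and is bounded (with bounded inverse, for small $\B_p$, via a Neumann series) on $H_0^n(\Wast;\widetilde H^1_\alpha(\Omega^\Lambda_H))$ for every $n$. The crucial step is then the identity $w=(\A+\B_p)^{-1}G=(\A^\epsilon+\B_p)^{-1}G$ for $G\in H_{00}^n(\Wast;\S;\widetilde H^1_\alpha(\Omega^\Lambda_H))$ and $\epsilon$ small enough: because the data vanish near $\S$, replacing $T^+_\alpha$ by $T^\epsilon_\alpha$ does not change the solution, and the regularity now follows directly from the mapping properties of $(\A^\epsilon+\B_p)^{-1}$. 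In short, the paper moves the singularity out of the operator rather than asking the perturbation to cancel it; your proposal attempts the latter and, for global perturbations, that cancellation does not occur.
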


 For simplicity, we adopt notations from \cite{Zhang2018e}. From Riesz representation theorem, there are operators $\A,\,\B_p\in\mathcal{L}(L^2(\Wast;\widetilde{H}^1_\alpha(\Omega^\Lambda_H)))$ that satisfies 
\begin{equation}
\left<\A w,z\right>=\int_\Wast a_\alpha(w(\alpha,\cdot),z(\alpha,\cdot))\d\alpha;\quad\left<\B_p w,z\right>=b(w,z)
\end{equation}
for any $w,z\in L^2(\Wast;\widetilde{H}^1_\alpha(\Omega^\Lambda_H))$, where $\left<\cdot,\cdot\right>$ is the inner product in the space $L^2(\Wast;\widetilde{H}^1_\alpha(\Omega^\Lambda_H))$. Use Riesz  representation theorem again, there is a $G\in H_{00}^n(\Wast;\S;\widetilde{H}^1_\alpha(\Omega^\Lambda_H))$ such that
\begin{equation}
\int_\Wast\int_{\Gamma^\Lambda_H}F(\alpha,x)\overline{z(\alpha,x)}\d x\d \alpha=\left<G,z\right>.
\end{equation}
 Theorem 24, \cite{Zhang2018e}, that $\A+\B_p$ is invertible in $H_{00}^n(\Wast;\S;\widetilde{H}^1_\alpha(\Omega^\Lambda_H))$, and for any $G\in H_{00}^n(\Wast;\S;\widetilde{H}^1_\alpha(\Omega^\Lambda_H))$, $(\A+\B_p)^{-1}G\in H_0^n(\Wast;\widetilde{H}^1_\alpha(\Omega^\Lambda_H))$.
 
 For a small enough $\epsilon>0$, let $\mathcal{X}_\epsilon$ be a smooth cut-off function satisfies
 \begin{equation}
 \mathcal{X}_\epsilon(t)=\begin{cases}
 1,\quad |t|<\epsilon/2;\\
 0,\quad|t|>1;\\
 \text{smooth}, \quad\text{otherwise}.
 \end{cases}
 \end{equation}
Define the modified Dirichlet-to-Neumann map $T_\alpha^\epsilon$ by
\begin{equation}
T_{\alpha}^\epsilon\phi=\i\sum_{{ j}\in\Z^{d-1}}\mathcal{X}_\epsilon\left(k-|\Lambda^*j-\alpha|\right)\sqrt{k^2-|\Lambda^* j-{\alpha}|^2}\widehat{\phi}({ j})e^{\i(\Lambda^*{ j}-{\alpha})\cdot \widetilde{x}}\,\,\text{ for }\phi=\sum_{{ j}\in\Z^{d-1}}\widehat{\phi}({ j})e^{\i(\Lambda^*{ j}-{\alpha})\cdot \widetilde{x}}.
\end{equation}
Replace the term $T^+_\alpha$ in $a_\alpha(w(\alpha,\cdot),z(\alpha,\cdot))$ by $T^\epsilon_\alpha$, then there is an operator $\A^\epsilon\in\mathcal{L}(L^2(\Wast;\widetilde{H}^1_\alpha(\Omega^\Lambda_H)))$ such that
\begin{equation}
\begin{aligned}
\left<\A^\epsilon w,z\right>=\int_\Wast &\left[\int_{\Omega^\Lambda_H}\left(\nabla w(\alpha,\cdot)\cdot\nabla \overline{z(\alpha,\cdot)}-k^2 w(\alpha,\cdot)\overline{z(\alpha,\cdot)}\right)\d{ x}\right.\\
&\qquad\qquad\qquad\qquad\qquad\quad\left.-\int_{\Gamma^\Lambda_H}T^\epsilon_\alpha(w(\alpha,\cdot))\overline{z(\alpha,\cdot)}\d s\right]\d\alpha.
\end{aligned}
\end{equation} 
Thus $\A^\epsilon$ is also a bounded linear operator in $\mathcal{L}(H_0^n(\Wast;\widetilde{H}^1_\alpha(\Omega^\Lambda_H)))$ for any $n\in\N$. 
From \cite{Zhang2018e} it was proved that, for small enough $\epsilon>0$, 
\begin{equation}\label{eq:equivalence}
w=\left(\A+\B_p\right)^{-1}G=\left(\A^\epsilon+\B_p\right)^{-1}G.
\end{equation}

From Theorem \ref{th:well-posed}, the original problem \eqref{eq:var_origional} and the Bloch transformed problem \eqref{eq:var_Bloch} are equivalent. Thus we can get the following result.

\begin{theorem}[Theorem 25, \cite{Zhang2018e}]
Suppose $\zeta,\,\zeta_p$ are Lipschitz continuous functions and $\zeta$ is $\Lambda$-periodic. $\zeta_p$ is a slight perturbation of $\zeta$. Given an incident field $u^i\in H_n^1(\Omega_H^p)$ for some $n\in\N$. If  $f$, which is defined by \eqref{eq:def_f}, satisfies that $\left(\J_{\Lambda_H}f\right)(\alpha,\cdot)=0$ for $\alpha$ in a neighbourhood of $\S$, then there is a unique solution $u\in \widetilde{H}_n^1(\Omega_H^p)$ for the variational problem \eqref{eq:var_origional}.
\label{th:decay}
\end{theorem}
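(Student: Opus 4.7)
The plan is to chain together three previously established facts: (i) the Bloch transform $\J_{\Omega_H}$ is an isomorphism between the weighted Sobolev scale on $\Omega_H$ and the $\alpha$-Sobolev scale $H_0^n(\Wast;\widetilde{H}_\alpha^1(\Omega^\Lambda_H))$; (ii) Theorem \ref{th:well-posed_high_order} produces $H_0^n$-regularity of the Bloch-transformed solution whenever the transformed right-hand side vanishes near the critical set $\S$; and (iii) Theorem \ref{th:well-posed}(1) allows us to translate between the original variational problem and its Bloch-transformed counterpart. A final compactly supported change of variables brings the regularity from $\Omega_H$ back to the perturbed domain $\Omega_H^p$.

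Concretely, I would first show that $F := \J_{\Gamma_H} f$ lies in $H_{00}^n(\Wast;\S;H_\alpha^{-1/2}(\Gamma_H^\Lambda))$. Since $u^i \in H_n^1(\Omega_H^p)$ and $\Theta_p$ differs from the identity only on the compact set $\Omega_{H_0}$, composition with $\Theta_p$ preserves the weighted $H^1$-norm outside a bounded strip, so a weighted trace estimate yields $u^i|_{\Gamma_H}\in H_n^{1/2}(\Gamma_H)$ and $\partial_{x_2} u^i|_{\Gamma_H}\in H_n^{-1/2}(\Gamma_H)$. The Dirichlet-to-Neumann operator $T^+$ from \eqref{eq:DtN} is bounded on the weighted scale, so $f\in H_n^{-1/2}(\Gamma_H)$ by \eqref{eq:def_f}. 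Since $\J_{\Gamma_H}$ is an isomorphism from $H_n^{-1/2}(\Gamma_H)$ onto $H_0^n(\Wast;H_\alpha^{-1/2}(\Gamma_H^\Lambda))$, we obtain $F\in H_0^n(\Wast;H_\alpha^{-1/2}(\Gamma_H^\Lambda))$. The extra hypothesis that $F(\alpha,\cdot)=0$ on a neighbourhood of $\S$ then places $F$ inside the closed subspace $H_{00}^n(\Wast;\S;H_\alpha^{-1/2}(\Gamma_H^\Lambda))$ by the very definition of this space.

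With this in hand, Theorem \ref{th:well-posed_high_order} gives a unique solution $w\in H_0^n(\Wast;\widetilde{H}_\alpha^1(\Omega^\Lambda_H))$ of \eqref{eq:var_Bloch}. Applying $\J_{\Omega_H}^{-1}$, which is an isomorphism between the weighted and the $\alpha$-Sobolev scales, yields $u_T\in\widetilde{H}_n^1(\Omega_H)$. Setting $u:= u_T\circ\Theta_p^{-1}$, the compact support of $\Theta_p-I$ in $\Omega_{H_0}$ implies that composition preserves the weight $n$, so $u\in\widetilde{H}_n^1(\Omega_H^p)$. By Theorem \ref{th:well-posed}(1), this $u$ satisfies \eqref{eq:var_origional}, and uniqueness follows from Theorem \ref{th:solv} applied with any $r\in(0,1)$, since $\widetilde{H}_n^1\subset \widetilde{H}_r^1$.

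The main obstacle lies in the first paragraph of the argument: the fact that the chain trace $\to$ $T^+$ $\to$ Bloch transform preserves the integer polynomial weight $n$ is stated loosely, and each constituent must be checked carefully. The trace and Bloch-isomorphism parts follow from \cite{Lechl2016,Zhang2018c}, but the boundedness of $T^+$ on $H_n^{1/2}(\Gamma_H)\to H_n^{-1/2}(\Gamma_H)$ for $n\ge 1$ is not covered by Theorem \ref{th:solv} and needs a Parseval/commutator argument in Fourier variables exploiting that the symbol $\sqrt{k^2-|\xi|^2}$ is smooth outside $|\xi|=k$; once this is in place, everything else is a formal concatenation of the isomorphisms and Theorem \ref{th:well-posed_high_order}.
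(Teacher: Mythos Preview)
Your proposal is correct and follows the same approach the paper indicates: the paper gives no detailed proof here but simply notes, just before the statement, that the equivalence in Theorem~\ref{th:well-posed} combined with Theorem~\ref{th:well-posed_high_order} yields the result (the theorem itself is quoted from \cite{Zhang2018e}). Your write-up fills in precisely the details this one-line justification omits---verifying $F\in H_{00}^n(\Wast;\S;H_\alpha^{-1/2}(\Gamma_H^\Lambda))$, invoking the Bloch isomorphism, and transporting regularity back through $\Theta_p$---so there is nothing to correct.
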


For a special case, i.e., when $\zeta$ is a constant function, the period $\Lambda$ could be chosen to be any positive number. Let $\J_{\Omega_H}(\Lambda)$ be the Bloch transform defined with the period $\Lambda$, we have the following result.

\begin{corollary}[Corollary 26, \cite{Zhang2018e}]
Suppose there is an $h_0\in\R$ such that $\|\zeta_p-h_0\|_{W^{1,\infty}(\R)}$ is small enough. Given an incident field $u^i\in H_n^1(\Omega_H^p)$ and define $f$ by \eqref{eq:boundary_condition}. Suppose there is a $\Lambda>0$ such that  $\left(\J_{\Gamma_H}(\Lambda)f\right)\in H_{00}^n(\Wast;\S;H^{-1/2}_\alpha(\Gamma^\Lambda_H))$, then there is a unique solution $u\in \widetilde{H}_n^1(\Omega_H^p)$ for the variational problem \eqref{eq:var_origional}.
\end{corollary}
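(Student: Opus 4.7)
The plan is to deduce the corollary from Theorem \ref{th:decay} by exploiting the fact that a constant function is trivially $\Lambda$-periodic for \emph{every} $\Lambda>0$. This lets us treat $\zeta \equiv h_0$ as the unperturbed periodic surface and $\zeta_p$ as its slight perturbation, while being free to pick the period that makes the hypothesis on the Bloch transform of $f$ hold.

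More precisely, first I would fix a $\Lambda>0$ supplied by the hypothesis, namely one for which $\J_{\Gamma_H}(\Lambda) f \in H_{00}^n(\Wast;\S;H_\alpha^{-1/2}(\Gamma_H^\Lambda))$. With respect to this $\Lambda$, the constant function $\zeta \equiv h_0$ is Lipschitz continuous and $\Lambda$-periodic, so all the constructions of Section~2 and Section~3 (the cells $\W$, $\Wast$, the restricted geometry $\Omega_H^\Lambda$, $\Gamma_H^\Lambda$, the Bloch transform $\J_{\Omega_H}$, the operators $T_\alpha^+$, $T_\alpha^\epsilon$, $\A$, $\A^\epsilon$, $\B_p$, the cut-off set $\S$, and the equivalence \eqref{eq:equivalence}) are well-defined. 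The hypothesis $\|\zeta_p - h_0\|_{W^{1,\infty}(\R)}$ being small enough is exactly the statement that $\zeta_p$ is a slight perturbation of the $\Lambda$-periodic surface $\zeta \equiv h_0$, matching the smallness assumption required in Theorem~\ref{th:well-posed_high_order} and hence in Theorem~\ref{th:decay}.

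Next, with these identifications in place, the assumptions of Theorem~\ref{th:decay} are satisfied: $\zeta$ and $\zeta_p$ are Lipschitz continuous with $\zeta$ periodic and $\zeta_p$ a slight perturbation; $u^i \in H_n^1(\Omega_H^p)$; and the Bloch transform $\J_{\Gamma_H}(\Lambda) f$ lies in $H_{00}^n(\Wast;\S;H_\alpha^{-1/2}(\Gamma_H^\Lambda))$, which is equivalent to saying $(\J_{\Gamma_H} f)(\alpha,\cdot) = 0$ in a neighbourhood of $\S$. Applying Theorem~\ref{th:decay} directly yields the existence of a unique $u \in \widetilde{H}_n^1(\Omega_H^p)$ solving \eqref{eq:var_origional}.

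There is no serious obstacle here; the only thing to verify, and the point of the corollary, is that the freedom to pick $\Lambda$ is genuine when the background is flat. In Theorem~\ref{th:decay} the period is imposed by the geometry, whereas here we leverage the fact that for $\zeta \equiv h_0$ the choice of $\Lambda$ is an analytical device rather than a geometric constraint, so the Bloch-transform hypothesis on $f$ can be satisfied by tuning $\Lambda$ to the spectrum of $f$ relative to the critical set $\S$. Once that degree of freedom is acknowledged, the corollary is a direct specialization of Theorem~\ref{th:decay}.
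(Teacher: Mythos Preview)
Your approach is exactly the paper's: the sentence preceding the corollary already says that when $\zeta$ is constant, the period $\Lambda$ can be chosen arbitrarily, and the corollary is then the specialization of Theorem~\ref{th:decay} (equivalently, Theorem~\ref{th:well-posed_high_order} combined with the equivalence in Theorem~\ref{th:well-posed}) to $\zeta\equiv h_0$.

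One small correction: you write that membership of $\J_{\Gamma_H}(\Lambda)f$ in $H_{00}^n(\Wast;\S;H_\alpha^{-1/2}(\Gamma_H^\Lambda))$ is \emph{equivalent} to $(\J_{\Gamma_H}f)(\alpha,\cdot)=0$ in a neighbourhood of $\S$. That is not quite right: $H_{00}^n$ is the \emph{closure} of such compactly-supported-away-from-$\S$ functions, so elements of $H_{00}^n$ need not themselves vanish near $\S$. The hypothesis of the corollary is therefore weaker than the hypothesis of Theorem~\ref{th:decay} as stated. The clean route is to invoke Theorem~\ref{th:well-posed_high_order} (whose hypothesis is precisely $F\in H_{00}^n$) together with the equivalence of Theorem~\ref{th:well-posed}, rather than Theorem~\ref{th:decay} directly. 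This does not affect the substance of your argument, which is otherwise correct.
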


\subsection{Continuous properties of solutions}

Suppose $W\subset\R^2$ is any bounded domain and $S(W)$ is a Sobolev space of functions defined in $W$, where $S(W)=H^m(W)$ with $m\in\N$ is a fixed integer. Let $\I\subset\R$ be any finite interval. Let the space $H^n(\I;S(W))$ be defined as
\begin{equation}
H^n(\I;S(W)):=\left\{\phi\in\mathcal{D}'(\I\times W):\, \int_\I\left\|\frac{\partial^\ell }{\partial\alpha^\ell}\phi(\alpha,\cdot)\right\|_{S(W)}\d\alpha\right\}.
\end{equation} 

\begin{remark}
In this paper, the space $H_0^n(\I;S(W))$ the subsection that functions are periodic with respect to the first variable, the norm is the same as that of $H^n(\I;S(W))$. However, for spaces $H_0^n(\I;H^s_{\alpha}(W))$ or $H_0^n(\I;H^s_{g(t)}(W))$, the norms may be defined in different ways.
\end{remark}

Then we can define the $C^n(\I;S(W))$ norm for any $n\in\N$ by
\begin{equation}
\|\phi\|_{C^n(\I;S(W))}:=\sum_{j=0}^n\left[\sup_{\alpha\in\I}\left\|\frac{\partial^j \phi(\alpha,\cdot)}{\partial\alpha^j}\right\|_{S(W)}\right].
\end{equation}
Let the H\"{o}lder coefficient with respect to $\alpha$ be
\begin{equation}
|\phi|_{C^{0,\gamma}(\I;S(W))}:=\sup_{\alpha_1\neq\alpha_2}\frac{\|\phi(\alpha_1,\cdot)-\phi(\alpha_1,\cdot)\|_{S(W)}}{|\alpha_1-\alpha_2|^\gamma}.
\end{equation}
Then define the  $C^{n,\gamma}(\I;S(W))$ norm  by
\begin{equation}
\|\phi\|_{C^{n,\gamma}(\I;S(W))}:=\|\phi\|_{C^n(\I;S(W))}+\left|\frac{\partial^n}{\partial\alpha^n}\phi(\alpha,\cdot)\right|_{C^{0,\gamma}(\I;S(W))}
\end{equation}

The well-known Sobolev embedding theorem could easily be extended to the space $H_0^n(\I;S(W))$. First we recall a classical  Minkowski integral inequality, see [\cite{Hardy1988}, Theorem 202].

\begin{lemma}\label{th:mink}
Suppose $(S_1,\mu_1)$ and $(S_2,\mu_2)$ are two measure spaces and $F:\,S_1\times S_2\rightarrow \R$ is measuable. Then the following inequality holds for any $p\geq 1$.
\begin{equation}
\left[\int_{S_2}\left|\int_{S_1}F(y,z)\d \mu_1(y)\right|^p\d\mu_2(z)\right]^{1/p}\leq\int_{S_1}\left(\int_{S_2}|F(y,z)|^p\d \mu_2(z)\right)^{1/p}\d\mu_1(y).
\end{equation}
\end{lemma}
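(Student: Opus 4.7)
The plan is to prove this standard Minkowski integral inequality either by duality or by a direct Hölder-type manipulation. I would favour the duality approach because it is cleanest, but I will sketch both options.

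\emph{Duality approach.} Let $q$ be the conjugate exponent of $p$, i.e., $1/p+1/q=1$, and set
\begin{equation*}
G(z):=\int_{S_1}F(y,z)\d \mu_1(y).
\end{equation*}
By the standard duality characterization of the $L^p(\mu_2)$ norm, it suffices to show that for every $h\in L^q(\mu_2)$ with $\|h\|_{L^q(\mu_2)}=1$,
\begin{equation*}
\left|\int_{S_2} G(z)\,h(z)\d \mu_2(z)\right|\leq \int_{S_1}\|F(y,\cdot)\|_{L^p(\mu_2)}\d \mu_1(y).
\end{equation*}
First I would invoke Fubini's theorem (which is legitimate once $F$ is replaced by $|F|$ by monotone convergence, or after a suitable truncation/approximation by simple functions to guarantee integrability) to swap the order of integration, giving
\begin{equation*}
\int_{S_2}G(z)\,h(z)\d \mu_2(z)=\int_{S_1}\left[\int_{S_2}F(y,z)\,h(z)\d \mu_2(z)\right]\d \mu_1(y).
\end{equation*}
Then Hölder's inequality in the inner integral bounds each bracket by $\|F(y,\cdot)\|_{L^p(\mu_2)}\|h\|_{L^q(\mu_2)}=\|F(y,\cdot)\|_{L^p(\mu_2)}$, and taking the supremum over admissible $h$ yields the claim.

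\emph{Direct approach (backup).} Assuming the right-hand side is finite and replacing $F$ by $|F|$ (and truncating so all quantities are finite), set $G(z)=\int_{S_1}|F(y,z)|\d\mu_1(y)$ and write $G(z)^{p}=G(z)^{p-1}\int_{S_1}|F(y,z)|\d\mu_1(y)$. Integrating in $z$, applying Fubini, and then Hölder in the $z$-variable with exponents $q,p$ (using $(p-1)q=p$) gives
\begin{equation*}
\|G\|_{L^p(\mu_2)}^{p}\leq \|G\|_{L^p(\mu_2)}^{p/q}\int_{S_1}\|F(y,\cdot)\|_{L^p(\mu_2)}\d\mu_1(y),
\end{equation*}
and dividing by $\|G\|_{L^p(\mu_2)}^{p/q}$ (valid after the usual truncation argument to ensure this quantity is positive and finite) yields the inequality.

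\emph{Main obstacle.} The only genuinely delicate point is justifying the applications of Fubini's theorem and, in the direct approach, the division step: one must first reduce to nonnegative measurable $F$ and, if necessary, truncate $F$ to $F_n:=\min(|F|,n)\mathbf{1}_{E_n}$ with $\mu_1(E_n),\mu_2(E_n)<\infty$ to ensure all integrals are finite, then pass to the limit by monotone convergence. The $p=1$ case is trivial (by Fubini), and for $1<p<\infty$ either argument works verbatim after this reduction. Since the result is exactly Theorem~202 of \cite{Hardy1988}, in the paper itself a one-line citation suffices.
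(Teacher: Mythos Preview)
Your proposal is correct: both the duality argument and the direct H\"older manipulation are standard and valid proofs of the Minkowski integral inequality, and your remarks about the truncation/monotone-convergence step needed to justify Fubini and the division are accurate. The paper, however, does not prove this lemma at all; it simply states it with a citation to Theorem~202 of \cite{Hardy1988}, exactly as you anticipated in your final sentence.
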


 We describe the result for one dimensional subspace $\I$.

\begin{lemma}\label{th:embedding}
For any $n\in\N_+$ and $m\in\N$, the following continuous embedding holds.
\begin{equation}
H^n(\I;S(W))\subset C^{n-1,1/2}(\I;H^m(W)).
\end{equation}
\end{lemma}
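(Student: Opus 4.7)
The plan is to reduce the statement to the scalar one–dimensional Sobolev embedding $H^1(\I)\subset C^{0,1/2}(\I)$, carried out with values in the Hilbert space $S(W)=H^m(W)$, where Lemma \ref{th:mink} plays the role of the tool that lets us commute the $S(W)$–norm with the $\alpha$–integration.

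\textbf{Step 1: Hölder bound for the top derivative.}  Given $\phi\in H^n(\I;S(W))$, the weak derivative $\partial_\alpha^n\phi$ belongs to $L^2(\I;S(W))$, and the (vector–valued) fundamental theorem of calculus gives
\begin{equation*}
\partial_\alpha^{n-1}\phi(\alpha_2,\cdot)-\partial_\alpha^{n-1}\phi(\alpha_1,\cdot)=\int_{\alpha_1}^{\alpha_2}\partial_\alpha^n\phi(\tau,\cdot)\,d\tau
\end{equation*}
for a.e.\ $\alpha_1<\alpha_2$ in $\I$. Taking the $S(W)=H^m(W)$ norm, applying Lemma \ref{th:mink} with $p=1$ (measures: Lebesgue on $(\alpha_1,\alpha_2)$ and the Lebesgue/Sobolev measure producing the $H^m(W)$–norm), and then Cauchy--Schwarz in $\tau$ yields
\begin{equation*}
\|\partial_\alpha^{n-1}\phi(\alpha_2,\cdot)-\partial_\alpha^{n-1}\phi(\alpha_1,\cdot)\|_{H^m(W)}\leq |\alpha_2-\alpha_1|^{1/2}\,\|\partial_\alpha^n\phi\|_{L^2(\I;H^m(W))}.
\end{equation*}
This is exactly the Hölder–$1/2$ bound for $\partial_\alpha^{n-1}\phi$.

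\textbf{Step 2: Continuity and sup–bound for lower derivatives.}  The inequality of Step 1 shows that $\partial_\alpha^{n-1}\phi$ admits a representative in $C^{0,1/2}(\I;H^m(W))$; iterating the same Bochner fundamental–theorem argument for $j=n-2,\ldots,0$ (each time the integrand $\partial_\alpha^{j+1}\phi$ is already continuous into $H^m(W)$, hence in $L^2$), every $\partial_\alpha^j\phi$ has a continuous representative. To bound the sup norm, I would use the pointwise identity
\begin{equation*}
\|\partial_\alpha^j\phi(\alpha,\cdot)\|_{H^m(W)}\leq \|\partial_\alpha^j\phi(\beta,\cdot)\|_{H^m(W)}+\int_{\I}\|\partial_\alpha^{j+1}\phi(\tau,\cdot)\|_{H^m(W)}\,d\tau,
\end{equation*}
average in $\beta\in\I$ and apply Cauchy--Schwarz, obtaining $\sup_\alpha\|\partial_\alpha^j\phi(\alpha,\cdot)\|_{H^m(W)}\leq C_{|\I|}\,\|\phi\|_{H^n(\I;H^m(W))}$ for every $0\leq j\leq n-1$. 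Summing over $j$ together with the Hölder bound of Step~1 yields the claimed embedding.

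\textbf{Main obstacle.}  The one genuinely delicate point is the Bochner framework: one must check that elements of $H^n(\I;S(W))$ really are absolutely continuous (in the vector–valued sense) into $S(W)$, so that the fundamental–theorem identity above holds and $\partial_\alpha^n\phi$ is Bochner integrable. Because $S(W)=H^m(W)$ is a separable Hilbert space and $\I$ is a finite interval, this follows from the standard Pettis–measurability/absolute–continuity theory for Hilbert–space–valued Sobolev functions; once it is in place, the remaining work is just the Minkowski + Cauchy--Schwarz calculation shown above, which is essentially the scalar argument lifted pointwise to the $H^m(W)$–norm.
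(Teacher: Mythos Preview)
Your proposal is correct and follows essentially the same route as the paper: the fundamental theorem of calculus in the $\alpha$-variable, followed by the Minkowski integral inequality (Lemma~\ref{th:mink}) to pull the $H^m(W)$-norm inside the $\alpha$-integral, and then Cauchy--Schwarz to extract the factor $|\alpha_2-\alpha_1|^{1/2}$. The only cosmetic difference is that the paper works first with smooth $\phi\in C^\infty(\I\times W)$ and passes to the limit by density, whereas you invoke the Bochner/absolute-continuity framework directly; your Step~2 (the sup-bound for the lower derivatives) is in fact more explicit than what the paper writes down.
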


\begin{proof}
We only need the proof for $n=1$. First let $m=0$. For any $\phi\in C^\infty(\I\times\W)$ and periodic with respect to $\alpha$, let $\I=(a_0,A_1)$ then $\phi(a_0,\cdot)=\phi(A_1,\cdot)$ in $S(W)$. Then
\begin{equation}
\phi(\alpha,x)=\phi(a_0,x)+\int_{a_0}^\alpha \frac{\partial\phi(\alpha,x)}{\partial\alpha}\d\alpha.
\end{equation}
For $\alpha_1<\alpha_2$ belong to $\I$,
\begin{equation}
\phi(\alpha_2,x)-\phi(\alpha_1,x)=\int_{\alpha_1}^{\alpha_2}\frac{\partial\phi(\alpha,x)}{\partial\alpha}\d\alpha.
\end{equation}
Then from Minkowski integral inequality described in Lemma \ref{th:mink},
\begin{equation}
\begin{aligned}
\left\|\phi(\alpha_2,\cdot)-\phi(\alpha_1,\cdot)\right\|_{L^2(S(W))}&=\left(\int_W \left|\phi(\alpha_2,x)-\phi(\alpha_1,x)\right|^2\d x\right)^{1/2}\\
&=\left(\int_W\left|\int_{\alpha_1}^{\alpha_2}\frac{\partial\phi(\alpha,x)}{\partial\alpha} \d\alpha\right|^2\d x\right)^{1/2}\\
&\leq \int_{\alpha_1}^{\alpha_2}\left(\int_W\left|\frac{\partial\phi(\alpha,x)}{\partial\alpha}\right|^2\d x\right)^{1/2}\d\alpha.
\end{aligned}
\end{equation}
Use the Cauchy-Schwartz inequality,
\begin{equation}
\begin{aligned}
\left\|\phi(\alpha_2,\cdot)-\phi(\alpha_1,\cdot)\right\|_{L^2(S(W))}&\leq \left(\int_{\alpha_1}^{\alpha_2}1\d\alpha\right)^{1/2}\left(\int_{\alpha_1}^{\alpha_2}\int_W\left|\frac{\partial\phi(\alpha,x)}{\partial\alpha}\right|^2\d x\d\alpha\right)^{1/2}\\
&\leq |\alpha_2-\alpha_1|^{1/2}\left(\int_{\I}\int_W\left|\frac{\partial\phi(\alpha,x)}{\partial\alpha}\right|^2\d x\d\alpha\right)^{1/2}\\
&\leq |\alpha_2-\alpha_1|^{1/2}\|\phi\|_{H^1(\I;L^2(W))}.
\end{aligned}
\end{equation}
From the density of $C^\infty(\I\times W)$ in $H^1(\I;L^2(W))$, it is easy to obtain the inequality for $\phi\in H^1(\I;L^2(W))$. The case that $m>0$ could be investigated in the same way.  The proof is finished.
\end{proof}

\begin{remark}
More generalized Sobolev embedding results could also be extended, the basic idea is similar to the proof here, i.e., to apply the H\"{o}lder inequality and Minkowski integral inequality. 
\end{remark}

From Sobolev embedding theorem, the continuity of the solution is easily obtained.

\begin{corollary}
When $\zeta_p$ is a small enough perturbation of $\zeta$ and $F\in H_{00}^{n+1}(\Wast;\S;H^{-1/2}_\alpha(\Gamma^\Lambda_H))$ for some $n\in\N$, then the unique solution $w\in C_0^{n,1/2}(\Wast;\widetilde{H}^1_\alpha(\Omega^\Lambda_H))$.
\end{corollary}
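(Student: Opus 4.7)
The plan is to combine the high-order regularity result of Theorem~\ref{th:well-posed_high_order} with the Sobolev embedding of Lemma~\ref{th:embedding}. Since $F\in H_{00}^{n+1}(\Wast;\S;H^{-1/2}_\alpha(\Gamma^\Lambda_H))$, Theorem~\ref{th:well-posed_high_order} (applied with index $n+1$ in place of $n$) yields that the unique solution satisfies
\begin{equation*}
w\in H_0^{n+1}(\Wast;\widetilde{H}^1_\alpha(\Omega^\Lambda_H)).
\end{equation*}
The target regularity $C_0^{n,1/2}$ then differs from the source regularity by exactly one derivative in $\alpha$ plus a gain of H\"older exponent $1/2$, which is precisely the statement of Lemma~\ref{th:embedding}. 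Hence it only remains to justify that the embedding lemma applies in the $\alpha$-dependent setting.

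First I would reduce to the fixed-space setting addressed by Lemma~\ref{th:embedding}. For any $w(\alpha,\cdot)\in \widetilde{H}^1_\alpha(\Omega^\Lambda_H)$ one sets $v(\alpha,x):=e^{-\i\alpha x_1}w(\alpha,x)$; then $v(\alpha,\cdot)$ is $\Lambda$-periodic in $x_1$, lies in the $\alpha$-independent space $\widetilde{H}^1(\Omega^\Lambda_H)$ of periodic functions, and the corresponding norms are equivalent uniformly in $\alpha\in\overline{\Wast}$ since $\Wast$ is bounded. Moreover, differentiating in $\alpha$ produces only polynomial-in-$x_1$ pre-factors that remain bounded on the bounded cell $\Omega^\Lambda_H$, so $v\in H^{n+1}(\Wast;\widetilde H^1(\Omega^\Lambda_H))$ iff $w\in H_0^{n+1}(\Wast;\widetilde H^1_\alpha(\Omega^\Lambda_H))$, with the analogous equivalence for the $C^{n,1/2}$ norms. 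Applying Lemma~\ref{th:embedding} to $v$ with $S(W)=\widetilde H^1(\Omega^\Lambda_H)$ and $\I=\Wast$ yields $v\in C^{n,1/2}(\Wast;\widetilde H^1(\Omega^\Lambda_H))$, and transforming back gives $w\in C^{n,1/2}(\Wast;\widetilde H^1_\alpha(\Omega^\Lambda_H))$.

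Finally, the subscript $0$ in $C_0^{n,1/2}$ encodes that $w$ together with its $\alpha$-derivatives up to order $n$ is $\Lambda^*$-periodic on the closed cell $\overline{\Wast}$. This is inherited from the corresponding periodicity of $F\in H_{00}^{n+1}$: Theorem~\ref{th:well-posed_high_order} already produces $w$ in the space $H_0^{n+1}$, whose derivatives through order $n$ extend periodically to the endpoints of $\Wast$ by the trace part of Lemma~\ref{th:embedding}, and these pointwise traces agree because they are the $L^2(W)$-limits of the periodic $H^{n+1}$ profile from either side.

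The main obstacle is the book-keeping around the $\alpha$-dependent space $\widetilde H^1_\alpha(\Omega^\Lambda_H)$, which is not directly of the form covered by Lemma~\ref{th:embedding}; the exponential conjugation above converts it into a fixed Hilbert space valued setting and makes Lemma~\ref{th:embedding} directly applicable. Once that reduction is in place, everything else is an immediate consequence of Theorem~\ref{th:well-posed_high_order} and Lemma~\ref{th:embedding}.
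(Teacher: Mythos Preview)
Your proposal is correct and follows the same approach as the paper, which simply states that the corollary is ``easily obtained'' from the Sobolev embedding of Lemma~\ref{th:embedding} applied to the $H_0^{n+1}$ regularity furnished by Theorem~\ref{th:well-posed_high_order}. You go further than the paper by explicitly addressing the mismatch between the $\alpha$-independent target space $S(W)=H^m(W)$ in Lemma~\ref{th:embedding} and the $\alpha$-dependent space $\widetilde H^1_\alpha(\Omega^\Lambda_H)$ needed here; your exponential conjugation $v(\alpha,x)=e^{-\i\alpha x_1}w(\alpha,x)$ is the standard and correct way to resolve this, and the paper simply takes this step for granted.
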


\section{Classical numerical method and its disadvantages}

In this section, we recall the numerical scheme introduced in \cite{Lechl2017} by Lechleiter \& Zhang and show that this method fails for higher regularity  problems. 

\subsection{Interpolation of the quasi-periodic parameter}

Let the interval $\I=(A_0,A_1)$ be divided uniformly into $N$ subintervals. Let the grid points be
\begin{equation}
\alpha_N^{(j)}=A_0+\frac{A_1-A_0}{N}j\quad\text{for }j=1,2,\dots,N.
\end{equation}
The basis $\left\{\psi_{N}^{(j)}\right\}_{j=1}^{N}$ are defined as 
\begin{equation}
\psi_{N}^{(j)}=\frac{1}{N}\sum_{\ell=-N/2+1}^{N/2}\exp\left(\i\ell\left[t-\alpha_N^{(j)}\right]\frac{2\pi}{A_1-A_0}\right),\quad j=1,2,\dots,N.
\end{equation}
It is obvious that the basic functions satisfies the following properties.
\begin{itemize}
\item $\phi_{2N}^{j}=\delta_{j,j'}$, where $\delta_{j,j}=1$, otherwise $\delta_{j,j'}=0$.
\item The functions $\left\{\psi_{2N}^{(j)}\right\}_{j=1}^{2N}$ are orthogonal. Moreover,
\begin{equation}
\int_\Wast \psi_{N}^{(j)}(t)\overline{\psi_{N}^{(j')}(t)}\d t=\frac{A_1-A_0}{N}\delta_{j,j'}.
\end{equation}
\end{itemize}

We consider the interpolation of a function $\phi\in C_0^{n,\gamma}(\I;S(W))$ where $0<\gamma<1$ and $m\in\N$ with the basis $\left\{\psi_{N}^{(j)}\right\}_{j=1}^{N}$. Then the interpolation of $\phi$ is represented by the basis in the form of: 
\begin{equation}
\phi_N(\alpha,x):=\sum_{j=-N/2+1}^{N/2}\psi_N^{(j)}(\alpha)\phi\left(\alpha_N^{(j)},x\right).
\end{equation}
To investigate the error estimation of the interpolation, we have to consider the Fourier coefficients first.

\begin{lemma}\label{th:decay_fourier}
Suppose $\phi\in C_0^{n,\gamma}(\I;S(W))$. Let $v$ have the Fourier series with respect to $\alpha$ 
\begin{equation}
\phi(\alpha,x)=\sum_{j\in\Z}\widehat{\phi}_j(x)\exp\left(\i j\alpha\frac{2\pi}{A_1-A_0}\right),
\end{equation}
where $\widehat{\phi}_j\in S(W)$. Moreover, for any $j\in \Z\setminus\{0\}$,
\begin{equation}
\|\widehat{\phi}_j\|_{S(W)}\leq C\left|\frac{A_1-A_0}{2\pi j}\right|^{n+\gamma}\left\|\phi\right\|_{C_0^{n,\gamma}(\I;L^2(W))}.
\end{equation}
\end{lemma}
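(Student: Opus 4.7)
The plan is to adapt the classical scalar argument for decay of Fourier coefficients of Hölder functions to the Banach-space–valued setting, and the two ingredients are integration by parts (giving $|j|^{-n}$) combined with the half-period translation trick (giving the extra $|j|^{-\gamma}$).

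First I would write the Fourier coefficient as
\begin{equation*}
\widehat{\phi}_j(x) = \frac{1}{A_1-A_0}\int_{A_0}^{A_1}\phi(\alpha,x)\exp\!\left(-\i j\alpha\frac{2\pi}{A_1-A_0}\right)\d\alpha.
\end{equation*}
Since $\phi\in C_0^{n,\gamma}(\I;S(W))$ is periodic in $\alpha$ (together with its derivatives up to order $n$, which is precisely what the subscript $0$ encodes), I can integrate by parts $n$ times in $\alpha$ with vanishing boundary terms, picking up a factor of $(2\pi\i j/(A_1-A_0))^{-n}$ and replacing $\phi$ by $\partial^n_\alpha\phi$. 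For $j\neq 0$ this already gives the correct power of $|j|$ up to $|j|^{-n}$.

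The second step is the translation trick. Setting $\tau_j:=(A_1-A_0)/(2j)$, the exponential satisfies $\exp(-\i j\tau_j\cdot 2\pi/(A_1-A_0))=-1$. Substituting $\alpha\mapsto\alpha+\tau_j$ in the post-integration-by-parts formula and using periodicity, I rewrite the coefficient once more and average the two expressions to obtain
\begin{equation*}
\widehat{\phi}_j(x) = \frac{1}{2(A_1-A_0)}\!\left(\frac{A_1-A_0}{2\pi\i j}\right)^{\!n}\!\!\int_{A_0}^{A_1}\!\bigl[\partial^n_\alpha\phi(\alpha,x)-\partial^n_\alpha\phi(\alpha+\tau_j,x)\bigr]e^{-\i j\alpha\cdot 2\pi/(A_1-A_0)}\d\alpha.
\end{equation*}
Taking the $S(W)$-norm inside the integral via Minkowski's integral inequality (Lemma \ref{th:mink}) and then applying the Hölder bound
\begin{equation*}
\|\partial^n_\alpha\phi(\alpha,\cdot)-\partial^n_\alpha\phi(\alpha+\tau_j,\cdot)\|_{S(W)}\leq |\tau_j|^\gamma\,|\partial^n_\alpha\phi|_{C^{0,\gamma}(\I;S(W))}\leq |\tau_j|^\gamma\,\|\phi\|_{C_0^{n,\gamma}(\I;S(W))},
\end{equation*}
I obtain
\begin{equation*}
\|\widehat{\phi}_j\|_{S(W)}\leq \frac{1}{2}\left|\frac{A_1-A_0}{2\pi j}\right|^n\!\left|\frac{A_1-A_0}{2j}\right|^\gamma\!\|\phi\|_{C_0^{n,\gamma}(\I;S(W))} = C\left|\frac{A_1-A_0}{2\pi j}\right|^{n+\gamma}\!\|\phi\|_{C_0^{n,\gamma}(\I;S(W))},
\end{equation*}
with $C=\pi^\gamma/2$, as required.

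I do not expect any serious obstacle: the only subtlety is making sure the boundary terms in the $n$ integrations by parts really vanish, which relies on the periodicity built into the subscript $0$ of $C_0^{n,\gamma}$, and that the Bochner-type integrals (with values in $S(W)$) are handled correctly by Minkowski's integral inequality rather than a naive pointwise estimate. A minor cosmetic point is that the statement writes the right-hand side in terms of $\|\phi\|_{C_0^{n,\gamma}(\I;L^2(W))}$; this should read $S(W)$, and my argument yields precisely the $S(W)$-bound.
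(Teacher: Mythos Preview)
Your proposal is correct and follows essentially the same route as the paper: integrate by parts $n$ times using periodicity, apply the half-period translation trick to produce a difference of $\partial^n_\alpha\phi$, then use Minkowski's integral inequality (Lemma~\ref{th:mink}) together with the H\"older seminorm to gain the extra factor $|j|^{-\gamma}$. The paper treats the case $S(W)=L^2(W)$ explicitly and then remarks that $m\geq 1$ is analogous, whereas you work directly in $S(W)$; your observation that the right-hand side in the statement should carry the $C_0^{n,\gamma}(\I;S(W))$-norm rather than the $L^2(W)$-norm is also apt.
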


\begin{proof}
First consider $m=0$. As $\phi\in C_0^{n,\gamma}(\I;L^2(W))$,
\begin{equation}
\|\phi\|_{C_0^{n,\gamma}(\I;L^2(W))}=\sum_{j=0}^n\left[\sup_{\alpha\in\I}\left\|\frac{\partial^j \phi(\alpha,\cdot)}{\partial\alpha^j}\right\|_{L^2(W)}\right]+\sup_{\alpha_1\neq\alpha_2}\frac{\|\partial^n_\alpha\phi(\alpha_1,\cdot)-\partial^n_\alpha\phi(\alpha_1,\cdot)\|_{L^2(W)}}{|\alpha_1-\alpha_2|^\gamma}<\infty.
\end{equation}
Then by integration by parts, for $j\neq 0$,
\begin{equation}
\begin{aligned}
\widehat{\phi}_j(x)&=\frac{1}{A_1-A_0}\int_\I \phi(\alpha,x)e^{-\i j\alpha\frac{2\pi}{A_1-A_0}}\d \alpha\\
&=\frac{(A_1-A_0)^{n-1}}{(2\i\pi j)^n}\int_{\I}\frac{\partial^n}{\partial\alpha^n}\phi(\alpha,x)e^{-\i j\alpha\frac{2\pi}{A_1-A_0}}\d\alpha.
\end{aligned}
\end{equation}
From the periodicity, 
\begin{equation}
\begin{aligned}
\widehat{\phi}_j(x)&=\frac{(A_1-A_0)^{n-1}}{(2\i\pi j)^n}\int_\I \frac{\partial^n}{\partial\alpha^n}\phi\left(\alpha+\frac{A_1-A_0}{2j},x\right)e^{-\i j\left(\alpha+\frac{A_1-A_0}{2j}\right)\frac{2\pi}{A_1-A_0}}\d \alpha\\
&=-\frac{(A_1-A_0)^{n-1}}{(2\i\pi j)^n}\int_\I \frac{\partial^n}{\partial\alpha^n}\phi\left(\alpha+\frac{A_1-A_0}{2j},x\right)e^{-\i j\alpha\frac{2\pi}{A_1-A_0}}\d \alpha.
\end{aligned}
\end{equation}
Take the average of these two expressions,
\begin{equation}
\widehat{\phi}_j (x)=\frac{(A_1-A_0)^{n-1}}{2(2\i\pi j)^n}\int_\I\left[\frac{\partial^n}{\partial\alpha^n}\phi(\alpha,x)-\frac{\partial^n}{\partial\alpha^n}\phi\left(\alpha+\frac{A_1-A_0}{2j},x\right)\right]e^{-\i j\alpha\frac{2\pi}{A_1-A_0}}\d \alpha.
\end{equation}
Then by the Minkowski inequality in Lemma \ref{th:mink},
\begin{equation}
\begin{aligned}
&\left\|\widehat{\phi}_j\right\|_{L^2(W)}\\
=&\frac{|A_1-A_0|^{n-1}}{2(2\pi j)^{n}}\left[\int_W\left|\int_\I\left[\frac{\partial^n}{\partial\alpha^n}\phi(\alpha,x)-\frac{\partial^n}{\partial\alpha^n}\phi\left(\alpha+\frac{A_1-A_0}{2j},x\right)\right]e^{-\i j\alpha\frac{2\pi}{A_1-A_0}}\d \alpha\right|^2\d x\right]^{1/2}\\
\leq &\frac{|A_1-A_0|^{n-1}}{2(2\pi j)^{n}}\int_\I\left(\int_W\left\|\frac{\partial^n}{\partial\alpha^n}\phi(\alpha,x)-\frac{\partial^n}{\partial\alpha^n}\phi\left(\alpha+\frac{A_1-A_0}{2j},x\right)\right\|^2\d x\right)^{1/2} \d\alpha\\
\leq &\frac{|A_1-A_0|^{n-1}}{2(2\pi j)^{n}}\int_\I\left\|\frac{\partial^n}{\partial\alpha^n}\phi(\alpha,\cdot)-\frac{\partial^n}{\partial\alpha^n}\phi\left(\alpha+\frac{A_1-A_0}{2j},\cdot\right)\right\|_{L^2(W)} \d\alpha\\
\leq &\frac{|A_1-A_0|^{n-1}}{2(2\pi j)^{n}}\left|\frac{A_1-A_0}{2j}\right|^\gamma\left\|\partial^n_\alpha\phi\right\|_{C_0^{0,\gamma}(\I;L^2(W))}
\\\leq &C\left|\frac{A_1-A_0}{2\pi j}\right|^{n+\gamma}\left\|\phi\right\|_{C_0^{n,\gamma}(\I;L^2(W))}.
\end{aligned}
\end{equation}
The case that $m\geq 1$ could be proved in the same way. The proof is finished.
\end{proof}

From similar proof of Theorem 27 in \cite{Zhang2017e}, the error estimate of the interpolation of $\phi$ is estimated.

\begin{theorem}\label{th:interp_qper}
For $\phi\in C_0^{n,\gamma}(\I;S(W))$ for some $n\in\N$ and $\gamma\in (0,1)$, the difference between $\phi$ and $\phi_N$ is bounded by
\begin{equation}
\|\phi-\phi_N\|_{L^2(\I;H^m(W))}\leq CN^{-n+\gamma-1/2}\|\phi\|_{C_0^{n,\gamma}(\I;H^m(W))},
\end{equation}
where $C$ is a positive constant that does not depend on $N$.
\end{theorem}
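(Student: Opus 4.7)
The plan is to follow the classical Fourier-based analysis of trigonometric interpolation, as in the proof of Theorem 27 of \cite{Zhang2017e}, with the scalar coefficients replaced by $H^m(W)$-valued ones. First I would expand $\phi$ in the Fourier series in $\alpha$ already used in Lemma \ref{th:decay_fourier},
\begin{equation*}
\phi(\alpha,x)=\sum_{j\in\Z}\widehat{\phi}_j(x)\exp\left(\i j\alpha\tfrac{2\pi}{A_1-A_0}\right),
\end{equation*}
and observe that, by the very construction of the basis $\{\psi_N^{(j)}\}$ from the discrete Fourier modes $\ell=-N/2+1,\dots,N/2$, the interpolant $\phi_N$ is a trigonometric polynomial of degree $\leq N/2$ whose Fourier coefficients are given by the standard aliasing formula $\widehat{(\phi_N)}_\ell=\sum_{m\in\Z}\widehat{\phi}_{\ell+mN}$ for $|\ell|\leq N/2$.

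Subtracting, $\phi-\phi_N$ has Fourier coefficients $-\sum_{m\neq 0}\widehat{\phi}_{\ell+mN}$ on the low modes $|\ell|\leq N/2$ and $\widehat{\phi}_\ell$ on the high modes $|\ell|>N/2$. Applying Parseval in $L^2(\I)$ fiberwise and treating $\widehat{\phi}_j$ as an element of the Hilbert space $H^m(W)$,
\begin{equation*}
\|\phi-\phi_N\|_{L^2(\I;H^m(W))}^2 \;\leq\; C\sum_{|\ell|\leq N/2}\Bigl\|\sum_{m\neq 0}\widehat{\phi}_{\ell+mN}\Bigr\|_{H^m(W)}^2 \;+\; C\sum_{|\ell|> N/2}\|\widehat{\phi}_\ell\|_{H^m(W)}^2.
\end{equation*}

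The key input is Lemma \ref{th:decay_fourier}, which with $S(W)=H^m(W)$ gives $\|\widehat{\phi}_j\|_{H^m(W)}\leq C|j|^{-(n+\gamma)}\|\phi\|_{C_0^{n,\gamma}(\I;H^m(W))}$ for $j\neq 0$. The tail sum is then dominated by $\sum_{|\ell|>N/2}|\ell|^{-2(n+\gamma)}\leq C N^{1-2(n+\gamma)}$. For the aliasing sum, $|\ell+mN|\geq |m|N/2$ whenever $|\ell|\leq N/2$ and $m\neq 0$, so the triangle inequality inside each $\|\cdot\|_{H^m(W)}$ followed by the geometric estimate $\sum_{m\neq 0}|m|^{-(n+\gamma)}<\infty$ bounds each inner norm by $CN^{-(n+\gamma)}$, and summing the $N$ low-mode contributions again yields $O(N^{1-2(n+\gamma)})$. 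Taking square roots produces the claimed rate.

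I do not expect any genuinely new obstacle: the argument is a textbook alias-plus-tail split once the Fourier decay is in hand. The only points requiring attention are (i) the requirement $n+\gamma>1/2$ so that the geometric series and the tail series both converge (consistent with the exponent in the statement), and (ii) the Bochner-valued formulation, which is handled exactly as in Lemma \ref{th:decay_fourier} by invoking Minkowski's integral inequality (Lemma \ref{th:mink}) to pull $\|\cdot\|_{H^m(W)}$ through the $\alpha$-integrals defining the Fourier coefficients, so that the scalar estimates apply without change.
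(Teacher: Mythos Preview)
Your argument is correct and, in fact, more careful than the paper's own proof. The paper asserts that the interpolant $\phi_N$ coincides with the $L^2$-orthogonal projection of $\phi$ onto the trigonometric polynomials of degree $\le N/2$, i.e.\ with the truncated Fourier series $\sum_{-N/2<j\le N/2}\widehat{\phi}_j\,e^{\i j\alpha\,2\pi/(A_1-A_0)}$, and then bounds only the tail $\sum_{|j|>N/2}\|\widehat{\phi}_j\|^2$ via Lemma~\ref{th:decay_fourier}. But $\phi_N$ was defined as the \emph{interpolant} at the uniform nodes $\alpha_N^{(j)}$, and this differs from the truncated series precisely by the aliasing contributions $\widehat{(\phi_N)}_\ell=\sum_{m\in\Z}\widehat{\phi}_{\ell+mN}$ that you write down; the paper's identification is not correct as stated. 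Your tail-plus-alias split, with both pieces controlled through the decay estimate of Lemma~\ref{th:decay_fourier}, is the standard rigorous route and recovers the same rate $N^{-n-\gamma+1/2}$ (note the sign: the exponent in the displayed inequality of the theorem should read $-n-\gamma+1/2$, as both your computation and the paper's final line show).

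One small sharpening of your remark~(i): the tail series $\sum_{|\ell|>N/2}|\ell|^{-2(n+\gamma)}$ needs only $n+\gamma>1/2$, but the aliasing bound you use, $\sum_{m\neq 0}|m|^{-(n+\gamma)}<\infty$, requires $n+\gamma>1$. This is automatic for $n\ge 1$, which is the only regime actually used in the paper; for $n=0$ and $\gamma\in(1/2,1)$ one would handle the aliasing sum slightly differently (e.g.\ a weighted Cauchy--Schwarz in $m$) to obtain the same rate.
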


\begin{proof}
First consider $m=0$. From the definition, $\phi_N$ is the projection of $v$ into the subspace ${\rm span}\left\{c_n(x)e^{\i n\frac{2\pi}{A_1-A_0}}:\,n=-N/2+1,\dots,N/2,\,c_n\in L^2(W)\right\}$, then
\begin{equation}
\phi_N(\alpha,x)=\sum_{j=-N/2+1}^{N/2}\widehat{\phi}_j(x)e^{\i j\alpha\frac{2\pi}{A_1-A_0}}.
\end{equation}
Thus the error between $\phi$ and $\phi_N$ is bounded by
\begin{equation}
\begin{aligned}
\|\phi-\phi_N\|^2_{L^2(\I;L^2(W))}&=\int_\I\int_W|\phi-\phi_N|^2\d x\d \alpha\\
&=\int_\I\int_W\left|\sum_{j\in\Z\setminus[-N/2+1,N/2]}\widehat{\phi}_j(x)e^{\i j\alpha\frac{2\pi}{A_1-A_0}}\right|^2\d x\d \alpha\\
&\leq (A_1-A_0)\sum_{j\in\Z\setminus[-N/2+1,N/2]}\left\|\widehat{\phi}_j\right\|^2_{L^2(W)}.
\end{aligned}
\end{equation}
With the results in Lemma \ref{th:decay_fourier}, 
\begin{equation}
\begin{aligned}
\|\phi-\phi_N\|^2_{L^2(\I;L^2(W))}&\leq (A_1-A_0)\sum_{j\in\Z\setminus[-N/2+1,N/2]}\left|\frac{A_1-A_0}{2\pi j}\right|^{2n+2\gamma}\left\|\phi\right\|^2_{C_0^{n,\gamma}(\I;L^2(W))}\\
&\leq C\left|\frac{A_1-A_0}{2\pi j}\right|^{2n+2\gamma-1}\left\|\phi\right\|^2_{C_0^{n,\gamma}(\I;L^2(W))}.
\end{aligned}
\end{equation}
The case that $m=0$ is proved. The case that $m\in\N$ could be proved in the similar way, thus is omitted here. The proof is finished.

\end{proof}

\subsection{Error estimation}

Let $\I=\Wast$ and $W=\Omega^\Lambda_H$. For the domain $\Omega^\Lambda_H$, let $\mathcal{M}_h$ be a family of regular and quasi-uniform meshes with the mesh width $h\leq h_0$ for some small enough $h_0>0$. We can easily construct the periodic basis functions $\left\{\phi_M^{(\ell)}\right\}_{\ell=1}^M$ that are piecewise linear, globally continuous and vanishes on $\Gamma^\Lambda$. Let $V_h:={\rm span}\left\{\phi_M^{(1)},\dots,\phi_M^{(M)}\right\}$, then $\widetilde{V}_h\subset \widetilde{H}^1_0(\Omega^\Lambda_H)$. A classical error estimate shows that for $v\in H^2(\Omega^\Lambda_H)$ (see \cite{Saute2007}),
\begin{equation}\label{eq:error_fem}
\inf_{v_h\in V_h}\left\|v-v_h\right\|_{H^1(\Omega^\Lambda_H)}\leq Ch\|w\|_{H^2(\Omega^\Lambda_H)}.
\end{equation}

With these basic functions, we can define the finite element space $\widetilde{X}_{N,h}$ by
\begin{equation}
\widetilde{X}_{N,h}=\left\{v_{N,h}(\alpha,x)=e^{-\i\alpha x_1}\sum_{j=1}^N\sum_{\ell=1}^M v_{N,h}^{(j,\ell)}\psi_N^{(j)}(\alpha)\phi_M^{(\ell)}(x):\,v_{N,h}^{(j,\ell)}\in\C\right\}\subset L^2(\Wast;\widetilde{H}^1_\alpha(\Omega^\Lambda_H)).
\end{equation}
Together with the result in Theorem \ref{th:interp_qper} and the classical error estimate \eqref{eq:error_fem}, we could obtain the approximation of function $\phi\in C_0^{n,\gamma}(\I;H^m(S))$, i.e.,
\begin{equation}\label{eq:approx}
\inf_{\phi_{N,h}\in\widetilde{X}_{N,h}}\left\|\phi-\phi_{N,h}\right\|_{L^2(\Wast;\widetilde{H}^1_\alpha(\Omega^\Lambda_H))}\leq C(N^{-n-\gamma+1/2}+h)\|\phi\|_{C_0^{n,\gamma}(\Wast;H^2_\alpha(\Omega^\Lambda_H))}.
\end{equation}

Then we seek for a finite element solution $w_{N,h}\in\widetilde{X}_{N,h}$ to the problem
\begin{equation}\label{eq:var_Bloch_old}
\int_\Wast a_\alpha(w_{N,h}(\alpha,\cdot),z_{N,h}(\alpha,\cdot))\d\alpha+b(w_{N,h},v_{N,h})=\int_\Wast\int_{\Gamma^\alpha_H}F(\alpha,\cdot)\overline{v_{N,h}(\alpha,\cdot)}\d s\d\alpha
\end{equation}
for any $v_{N,h}\in \widetilde{X}_{N,h}$. We do not go to details of the representation of the finite dimensional problem, for this is very complicated but not important for this paper. Following the proof of Theorem 9 in \cite{Lechl2017}, we can prove the high order convergence for the finite element method.

\begin{corollary}\label{th:convergence_high_old}
Suppose $\zeta,\zeta_p\in C^{2,1}(\R)$ and $\left\|\zeta_p-\zeta\right\|_{W^{1,\infty}(\R)}$ is small enough. For any $F\in H_{00}^{n+1}(\Wast;\S;H^{1/2}_\alpha(\Gamma^\Lambda_H))$ with $n\in\N$, there is a unique solution $w_{N,h}$ to the finite dimensional problem \eqref{eq:var_Bloch_old} when $N\geq N_0$ is large enough and $0<h<h_0$ is small enough. Moreover, the solution satisfies
\begin{equation}
\|w_{N,h}-w\|_{L^2(\Wast;H^\ell(\Omega^\Lambda_H))}\leq Ch^{1-\ell}(N^{-r}+h)\|F\|_{C_0^{n,1/2}(\Wast;H^{1/2}_\alpha(\Gamma^\Lambda_H))},\quad\ell=0,1.
\end{equation}
\end{corollary}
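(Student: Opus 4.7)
The plan is to combine the approximation estimate \eqref{eq:approx} with a Schatz-type Galerkin argument for the sesquilinear form $a_\alpha + b$, essentially following the structure of the proof of Theorem~9 in \cite{Lechl2017}.

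First, I would record the regularity of the exact solution $w$. Since $F \in H_{00}^{n+1}(\Wast;\S;H^{1/2}_\alpha(\Gamma^\Lambda_H))$ and $\zeta,\zeta_p \in C^{2,1}(\R)$, Theorem \ref{th:well-posed_high_order} combined with Theorem \ref{th:well-posed}(3) yields $w \in H_0^{n+1}(\Wast;\widetilde{H}^2_\alpha(\Omega^\Lambda_H))$. Applying the embedding Lemma \ref{th:embedding} then gives $w \in C_0^{n,1/2}(\Wast;\widetilde{H}^2_\alpha(\Omega^\Lambda_H))$, with its norm controlled by $\|F\|_{C_0^{n,1/2}(\Wast;H^{1/2}_\alpha(\Gamma^\Lambda_H))}$ through continuous dependence of the solution on the data. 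Inserting this regularity into \eqref{eq:approx} produces a quasi-best interpolant $\widetilde{w}_{N,h} \in \widetilde{X}_{N,h}$ satisfying
\begin{equation*}
\|w - \widetilde{w}_{N,h}\|_{L^2(\Wast;\widetilde{H}^1_\alpha(\Omega^\Lambda_H))} \leq C(N^{-r} + h)\|F\|_{C_0^{n,1/2}(\Wast;H^{1/2}_\alpha(\Gamma^\Lambda_H))}.
\end{equation*}

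The most delicate step is establishing existence, uniqueness, and quasi-optimality of the Galerkin solution $w_{N,h}$. The form $a_\alpha + b$ is not coercive, but, as in \cite{Lechl2017}, it splits as a coercive part (the shifted principal term plus the real part of $T_\alpha^+$) plus a compact perturbation, yielding a G{\aa}rding-type inequality on $L^2(\Wast;\widetilde{H}^1_\alpha(\Omega^\Lambda_H))$. Combined with the continuous uniqueness from Theorem \ref{th:well-posed} and the fact that the best-approximation error in $\widetilde{X}_{N,h}$ tends to zero as $N\to\infty$ and $h\to 0$, a standard Schatz contradiction argument yields a discrete inf-sup condition uniform in $N$ and $h$ for $N \geq N_0$ and $h \leq h_0$. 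This delivers unique solvability of \eqref{eq:var_Bloch_old} together with the quasi-optimal bound $\|w - w_{N,h}\|_{L^2(\Wast;H^1(\Omega^\Lambda_H))} \leq C \inf_{v_{N,h} \in \widetilde{X}_{N,h}} \|w - v_{N,h}\|_{L^2(\Wast;H^1(\Omega^\Lambda_H))}$, which combined with the interpolation estimate handles the $\ell = 1$ case.

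Finally, the sharper $\ell = 0$ bound is obtained via the Aubin--Nitsche trick: introduce the adjoint variational problem with right-hand side $w - w_{N,h}$, use the $H^2$-regularity of its solution (Theorem \ref{th:well-posed}(3)) to gain one additional power of $h$ when interpolating the adjoint solution into $\widetilde{X}_{N,h}$, and conclude via Galerkin orthogonality. The main obstacle is the Schatz step: the nonlocal, $\alpha$-dependent operator $T_\alpha^+$ has square-root singularities near $\S$, so the compact perturbation and duality arguments must be handled with care. This is precisely where the assumption $F \in H_{00}^{n+1}(\Wast;\S;H^{1/2}_\alpha(\Gamma^\Lambda_H))$ -- forcing $F$ to vanish in a neighbourhood of the singular quasi-periodicities -- together with the equivalence \eqref{eq:equivalence} that replaces $T_\alpha^+$ by the smooth truncated operator $T_\alpha^\epsilon$, becomes essential for the argument to close uniformly in $\alpha \in \Wast$.
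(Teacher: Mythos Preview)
Your proposal is correct and follows essentially the same route as the paper: establish $w\in C_0^{n,1/2}(\Wast;\widetilde H^2_\alpha(\Omega^\Lambda_H))$ via Theorem~\ref{th:well-posed_high_order}, Theorem~\ref{th:well-posed}(3) and Lemma~\ref{th:embedding}, feed this into \eqref{eq:approx}, and then invoke the Schatz/Aubin--Nitsche machinery of Theorem~9 in \cite{Lechl2017}. One small clarification: the G{\aa}rding/Schatz argument from \cite{Lechl2017} already works for the original $T_\alpha^+$ uniformly in $\alpha$ (boundedness and the coercive-plus-compact splitting are not affected by the square-root behaviour near $\S$), so the hypothesis $F\in H_{00}^{n+1}(\Wast;\S;H^{1/2}_\alpha(\Gamma^\Lambda_H))$ and the equivalence \eqref{eq:equivalence} are used only to obtain the high-order regularity of $w$, not to close the discrete inf-sup condition.
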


\begin{proof}
As $\zeta,\zeta_p\in C^{2,1}(\R)$ and $\zeta_p$ is a small enough perturbation of $\zeta$, for any $F\in H_{00}^{n+1}(\Wast;\S;H^{1/2}_\alpha(\Gamma^\Lambda_H))$, the unique solution $w\in H_0^{n+1}(\Wast;\widetilde{H}^2_\alpha(\Omega^\Lambda_H))$. From Lemma \ref{th:embedding}, $w\in C_0^{n,1/2}(\Wast;\widetilde{H}^2_\alpha(\Omega^\Lambda_H))$. Thus the inequality \eqref{eq:approx} becomes
\begin{equation}
\inf_{w_{N,h}\in \widetilde{X}_{N,h}}\left\|w-w_{N,h}\right\|_{L^2(\Wast;\widetilde{H}^1(\Omega^\Lambda_H))}\leq C(N^{-n}+h)\|w\|_{H^2(\Omega^\Lambda_H)}.
\end{equation}
Then the proof is carried out following the proof of Theorem 9, \cite{Lechl2017}. The proof is finished.
\end{proof}

\subsection{Comments on the numerical method}\label{sec:exp}

It seems that the numerical method introduced in this section is good enough, especially for large $n$'s. However, when applying this classical finite element method to the numerical scheme, the decaying rate of the relative errors is not as fast as expected. As far as we can see, there are two possible reasons for this phenomenon.   

The first reason lies in the approximation of the Dirichlet-to-Neumann map. As the Dirichlet-to-Neumann map is defined as an infinite series, an approximation by finite series is always necessary for numerical schemes. Suppose $M\in\N$ is a large enough positive integer, let the finite approximation be
\begin{equation}
T_\alpha^M\phi:=\i\sum_{j=-M}^M\sqrt{k^2-|\Lambda^*j-\alpha|^2}\widehat{\phi}(\alpha,j)e^{\i(\Lambda^*j-\alpha)x_1}\,\text{ for }\phi=\sum_{j=-M}^M \widehat{\phi}(\alpha,j)e^{\i(\Lambda^*j-\alpha)x_1}.
\end{equation}
From the $\Lambda^*$-periodicity of $\phi(\alpha,\cdot)$, we have
\begin{equation}
\sum_{j\in\Z} \widehat{\phi}(-\Lambda^*/2,j)e^{\i\Lambda^*(j+1/2)x_1}=\sum_{j\in\Z} \widehat{\phi}(\Lambda^*/2,j)e^{\i\Lambda^*(j-1/2)x_1},
\end{equation}
thus
\begin{equation}
\widehat{\phi}(-\Lambda^*/2,j-1)=\widehat{\phi}(\Lambda^*/2,j).
\end{equation}
Then 
\begin{equation}
\begin{aligned}
\left(T^M_\alpha\phi\right)(\Lambda^*/2,x)&=\i\sum_{j=-M}^M\sqrt{k^2-|\Lambda^*(j-1/2)|^2}\,\widehat{\phi}(\Lambda^*/2,j)e^{\i\Lambda^*(j-1/2)x_1}\\
&=\i\sum_{j=-M}^M\sqrt{k^2-|\Lambda^*(j-1/2)|^2}\,\widehat{\phi}(-\Lambda^*/2,j-1)e^{\i\Lambda^*(j-1/2)x_1}\\
&=\i\sum_{j=-M-1}^{M-1}\sqrt{k^2-|\Lambda^*(j+1/2)|^2}\,\widehat{\phi}(-\Lambda,j)e^{\i(\Lambda^*(j+1/2)x_1}.
\end{aligned}
\end{equation}
Thus when $\phi$ is $\Lambda^*-$periodic with respect to $\alpha$, the function $T_\alpha\phi$ fails to be periodic. Thus the periodicity of the solution $w$ with respect to $\alpha$ is not guaranteed, the error estimate in Theorem \ref{th:convergence_high_old} fails.

Another problems may occur in the neighbourhood of the points in $\S$. From \eqref{eq:equivalence}, the solution $w=(\A+\B_p)^{-1}G=(\A^\epsilon+\B_p)^{-1}G$. To guarantee the equivalence, from the requirement of the perturbation theory, the parameter $\epsilon>0$ should be sufficiently small. In this case, the derivative of the cutoff function $\mathcal{X}_\epsilon$ could reach $O(\epsilon^{-1})$, which becomes very large if $\epsilon\rightarrow 0^+$. Thus large oscillations are expect when $\alpha$ lies in the neighborhood of any point in $\S$. Although the solution $w$ may reach a high regularity with respect to $\alpha$, the numerical scheme may result in a bad behavior of relative errors due to the oscillation. A numerical technique is highly demanded for the discretization of functions with large oscillations.

 With these two disadvantages, we apply the method in \cite{Zhang2017e} to modify the original variational problem \eqref{eq:var_Bloch} in the following section.

\section{Modified variational problem}

Due the two possible problems we have explained in the last section, we modify the variational problem \eqref{eq:var_Bloch} in order to deal with the high oscillation problems. We adopt the method introduced in \cite{Zhang2017e} to replace the quasi-periodicity parameter $\alpha$ by a monotonic function. 

For simplicity, we redefine the interval by $\Wast:=(-\underline{k},\Lambda^*-\underline{k}]$, then $\S$ has the following representations 
\begin{itemize}
\item Case 1, $\underline{k}=m\Lambda^*/2$ for some $m=0,1$, $\S=\left\{-\underline{k},\Lambda^*-\underline{k}\right\}$;
\item Case 2, $\underline{k}\neq m\Lambda^*/2$ for any $m=0,1$, $\S=\left\{-\underline{k},\underline{k},\Lambda^*-\underline{k}\right\}$.
\end{itemize}
From the periodicity of $\alpha$, the inverse Bloch transform still has the representation as in Theorem \ref{th:Bloch_property} with the replaced $\Wast$.

Let the $g$ be defined in $\Wast$ and it satisfies the conditions in the following assumption.
\begin{assumption}\label{asp:g}
For any $n\in\N$, suppose $g$ satisfies the following conditions:
\begin{itemize}
\item $g$ is monotonically increasing.
\item $g\in C^{n+1}(\overline{\Wast})$ and $g\in C^\infty(\Wast\setminus\S)$. Thus $g'\in C^{n}(\overline{\Wast})\cap W^{n+1,\infty}(\overline{\Wast})$.
\item $g'(t)>0$ for $t\in\Wast\setminus\S$. Thus the inverse function of $g$ exists in $\Wast\setminus\S$.
\item For any $\alpha_0\in\S$, $g(\alpha_0)=\alpha_0$. Moreover, there is a small enough $\epsilon>0$ such that
\begin{eqnarray}
g(t)=\alpha_0+O\left(|t-\alpha_0|^{n+2}\right)\quad\text{ as }t\rightarrow\alpha_0.
\end{eqnarray}
\end{itemize} 
\end{assumption}
If $g$ satisfies Assumption \ref{asp:g}, $g'\in C^n(\overline{\Wast})$ and $g'(t)=\alpha_0+O\left(|t-\alpha_0|^{n+1}\right)$ as $t\rightarrow\alpha_0$. For example, for $k=1,1.2$, the function $g$ have the graphs in Figure \ref{fig:g}. For the construction of function $g$ with any natural number $n$, we refer to the Appendix.

 \begin{figure}[htb]
\centering
\begin{tabular}{c c}
\includegraphics[width=0.4\textwidth]{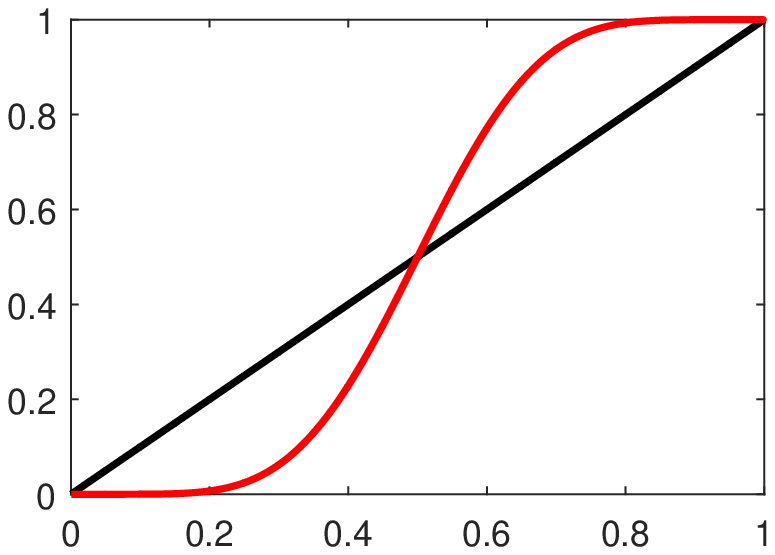} 
& \includegraphics[width=0.4\textwidth]{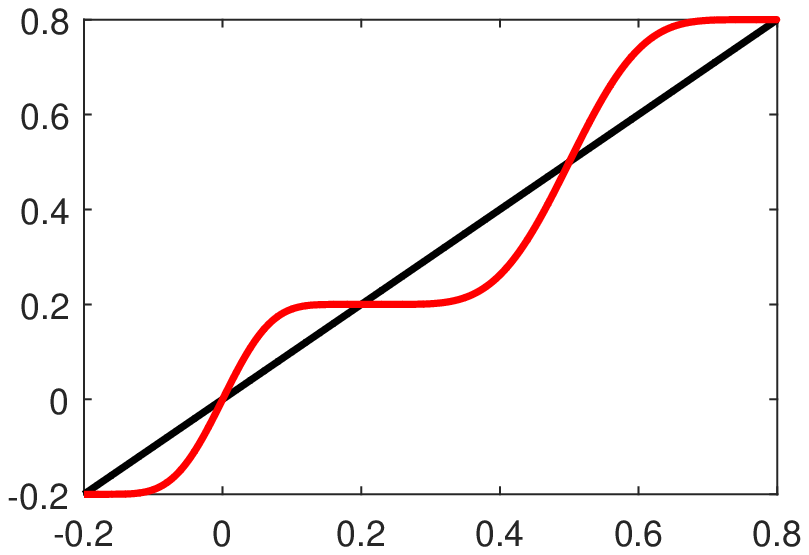}\\[-0cm]
(a) & (b)  
\end{tabular}%
\caption{Examples of function $g$, with $\Lambda=2\pi$, i.e., $\Lambda^*=1$. (a): $k=1$; (b): $k=1.2$.}
\label{fig:g}
\end{figure}
 
 Define the operator $\mathcal{T}$ by
\begin{equation}
\mathcal{T}\phi(t,\cdot)=\phi(g(t),\cdot)g'(t).
\end{equation}
Then we can define the space of the image of $\mathcal{T}$ with domain $L^2(\Wast;H^s_\alpha(\Omega^\Lambda_H))$ by
\begin{equation}
L^2(\Wast;H_{g(t)}^s(\Omega^\Lambda_H)):=\left\{\psi=\mathcal{T}\phi:\,\phi\in L^2(\Wast;H^s_\alpha(\Omega^\Lambda_H))\right\}
\end{equation}
equipped with the norm
\begin{equation}
\|\psi\|^2_{L^2(\Wast;H^s_{g(t)}(\Omega^\Lambda_H))}:=\int_\Wast \|\psi(t,\cdot)\|^2_{H^s_{g(t)}(\Omega^\Lambda_H)}\left[g'(t)\right]^{-1}\d t.
\end{equation}
It is easy to check that
\begin{equation}
\|\mathcal{T}\phi\|_{L^2(\Wast;H^s_{g(t)}(\Omega^\Lambda_H))}=\|\phi\|_{L^2(\Wast;H^s_{\alpha}(\Omega^\Lambda_H))},
\end{equation}
which means that the two function spaces are isometrically isomorphic and $\mathcal{T}$ is an isomporphism with norm equals to $1$. The space $L^2(\Wast;H^s_{g(t)}(\Gamma^\Lambda_H))$ is defined in the same way. The inner product of $L^2(\Wast;{H}^s_{g(t)}(\Omega^\Lambda_H))$ is defined by
\begin{equation}
\left<\phi,\psi\right>_{L^2(\Wast;{H}^s_{g(t)}(\Omega^\Lambda_H))}=\int_\Wast \left<\phi(t,\cdot),\psi(t,\cdot)\right>_{H^s_{g(t)}(\Omega^\Lambda_H)}\left[g'(t)\right]^{-1}\d t.
\end{equation} 
In this section, all the inner product denoted by $\left<\cdot,\cdot\right>$ is  with respect to $L^2(\Wast;H^s_{g(t)}(\Omega^\Lambda_H))$. We can extend the definition of the $L^2-$space to the $H^r-$spaces for $r\in\R$.

                                                                                                                                                                                                                                                                                                                                                                                                                                                                                                                                                                                                                                                                                                                                                                                                                                                                                                                                                                                                                                                                                                                                                                                                                                                                                                                                                                                                                                                                                                                                                                                                                                                                                                                                                                                                                                                                                                                                                                                                                                                                                                                                                                                                                                                                                                                                                                                                                                                                                                                                                                                                                                                                                                                                                                                                                        Let $\alpha:=g(t)$ and $\widetilde{w}(t,\cdot):=\mathcal{T}w\in L^2(\Wast;\S;\widetilde{H}^1_{g(t)}(\Omega^\Lambda_H))$. Then $\widetilde{w}(t,\cdot)$ satisfies the variational problem for any smooth test function $\widetilde{z}(t,\cdot)=\mathcal{T}z$ defined in $\Wast\times\Omega^\Lambda_H$:
\begin{equation}\label{eq:var_Bloch_modi}
\widetilde{a}(\widetilde{w},\widetilde{z})+\widetilde{b}(\widetilde{w},\widetilde{z})=\left<\widetilde{F},\widetilde{z}\right>_{L^2(\Wast;H^{-1/2}_{g(t)}(\Gamma^\Lambda_H))}\quad\text{ where }\widetilde{F}(t,\cdot)=\mathcal{T}F=F(g(t),\cdot)g'(t),
\end{equation}
or equivalently
\begin{equation}\label{eq:var_Bloch_modi_g}
\widetilde{a}(\widetilde{w},\widetilde{z})+\widetilde{b}(\widetilde{w},\widetilde{z})=\left<\widetilde{G},z\right>\quad\text{ where }\widetilde{G}(t,\cdot)=G(g(t),\cdot)g'(t),
\end{equation}
where
\begin{eqnarray}
&&\widetilde{a}(\widetilde{w},\widetilde{z})=\int_\Wast a_{g(t)}(\widetilde{w}(t,\cdot),\widetilde{z}(t,\cdot))\left[g'(t)\right]^{-1}\d t;\\
&&\begin{aligned} \widetilde{b}(\widetilde{w},\widetilde{z})&=\int_{\Omega_H}\left[ (A_p-I_2)\grad\left( \widetilde{\J}_{\Omega_H}^{-1} \widetilde{w}\right)\cdot\grad\overline{\left(\widetilde{\J}_{\Omega_H}^{-1} {\widetilde{z}}\right)}-k^2 (c_p-1)\left(\widetilde{\J}_{\Omega_H}^{-1}\widetilde{w}\right)\cdot\overline{\left(\widetilde{\J}_{\Omega_H}^{-1} {\widetilde{z}}\right)}\right]\d x\\
&=\int_\Wast\int_{\Omega^\Lambda_H}\left[\widetilde{\J}_{\Omega_H}\left[(A_p-I_2)\grad\left(\widetilde{\J}_{\Omega_H}^{-1}\widetilde{w}\right)\right]\cdot\grad\overline{\widetilde{z}}-k^2\widetilde{\J}_{\Omega_H}\left[(c_p-1)\left(\widetilde{\J}^{-1}_{\Omega_H}\widetilde{w}\right)\right]\overline{\widetilde{z}}\right]\d x\d t,
\end{aligned}
\end{eqnarray}
and $\left<\cdot,\cdot\right>$ is the inner product defined in $L^2(\Wast;\widetilde{H}^1_\alpha(\Omega^\Lambda_H))$. The modified  Bloch transforms $\widetilde{\J}_{\Omega_H}^{-1}$ and its inverse $\widetilde{\J}_{\Omega_H}^{-1}$ is defined as
\begin{eqnarray}
&&\left(\widetilde{\J}_{\Omega_H}\phi\right)(t,\cdot)=\left(\J_{\Omega_H}\phi\right)(g(t),\cdot);\\
&&\left(\widetilde{\J}_{\Omega_H}^{-1}\widetilde{\psi}\right)\left(x+\left(\begin{matrix}
\Lambda j\\0
\end{matrix}
\right)\right)=
C_\Lambda\int_\Wast \widetilde{\psi}(t,x)e^{\i g(t)\Lambda j}\d t.
\end{eqnarray}

The property of $\widetilde{\J}_{\Omega_H,N}^{-1}$ could be easily obtained as a corollary of Theorem \ref{th:Bloch_property}, by simply changing variables $\alpha=g(t)$.
\begin{corollary}\label{th:prop_modi_bloch}
The operator $\widetilde{\J}_{\Omega_H}^{-1}$ is bounded from $H^{n'}_{0}(\Wast;H^s_{g(t)}(\Omega^\Lambda_H))$ to $H_{n'}^s(\Omega_H)$  for any $n'\in\N$ such that $n'\leq n$ and $s\in\R$. 
\end{corollary}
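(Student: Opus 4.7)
The plan is to factor $\widetilde{\J}_{\Omega_H}^{-1}$ as a composition of the substitution operator $\mathcal{T}^{-1}$ and the standard inverse Bloch transform $\J_{\Omega_H}^{-1}$, and then invoke Theorem \ref{th:Bloch_property} together with the isometry properties of $\mathcal{T}$.

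First I would establish the identity $\widetilde{\J}_{\Omega_H}^{-1} = \J_{\Omega_H}^{-1}\circ\mathcal{T}^{-1}$ directly from the definitions. Write an arbitrary $\widetilde{\psi}$ in the source space as $\widetilde{\psi}=\mathcal{T}\psi$ with $\psi\in L^2(\Wast;H^s_\alpha(\Omega^\Lambda_H))$, so that $\widetilde{\psi}(t,\cdot)=\psi(g(t),\cdot)\,g'(t)$. Substituting into the defining integral yields
\begin{equation*}
\bigl(\widetilde{\J}_{\Omega_H}^{-1}\widetilde{\psi}\bigr)\!\left(x+\left(\begin{matrix}\Lambda j\\0\end{matrix}\right)\right) = C_\Lambda\int_\Wast \psi(g(t),x)\,g'(t)\,e^{\i g(t)\Lambda j}\d{t},
\end{equation*}
and since $g$ is a monotonically increasing $C^{n+1}$-bijection of $\Wast$ onto itself, the change of variables $\alpha=g(t)$ converts this into $C_\Lambda\int_\Wast \psi(\alpha,x)\,e^{\i\alpha\Lambda j}\d{\alpha} = \bigl(\J_{\Omega_H}^{-1}\psi\bigr)(x+(\Lambda j,0)^\T)$. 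Thus $\widetilde{\J}_{\Omega_H}^{-1}\circ\mathcal{T}=\J_{\Omega_H}^{-1}$.

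Second, I would control the mapping properties of $\mathcal{T}^{-1}$. For $n'=0$ this is already recorded, since by construction of the target space $\mathcal{T}:L^2(\Wast;H^s_\alpha(\Omega^\Lambda_H))\to L^2(\Wast;H^s_{g(t)}(\Omega^\Lambda_H))$ is an isometric isomorphism. For $n'\geq 1$, iterated $t$-differentiation of $(\mathcal{T}\psi)(t,\cdot)=\psi(g(t),\cdot)g'(t)$ produces, via the Faà di Bruno formula, a sum of terms in which each $\alpha$-derivative $(\partial_\alpha^k\psi)\circ g$ (for $k\leq n'$) is multiplied by a polynomial in $g',g'',\ldots,g^{(n'+1)}$. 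Because $g\in C^{n+1}(\overline{\Wast})$ and $n'\leq n$, these coefficients are uniformly bounded on $\Wast$, and the analogous argument applied on $\Wast\setminus\S$ using the inverse $g^{-1}$ together with the higher-order vanishing of $g'$ at $\S$ from Assumption \ref{asp:g} shows that no singularity is introduced. Consequently $\mathcal{T}:H^{n'}_0(\Wast;H^s_\alpha(\Omega^\Lambda_H))\to H^{n'}_0(\Wast;H^s_{g(t)}(\Omega^\Lambda_H))$ is a topological isomorphism, so $\mathcal{T}^{-1}$ is bounded between these spaces.

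The last step is simply to combine these two ingredients with Theorem \ref{th:Bloch_property}, which gives boundedness of $\J_{\Omega_H}^{-1}:H^{n'}_0(\Wast;H^s_\alpha(\Omega^\Lambda_H))\to H^s_{n'}(\Omega_H)$. Writing $\widetilde{\J}_{\Omega_H}^{-1}=\J_{\Omega_H}^{-1}\circ\mathcal{T}^{-1}$ expresses the desired operator as a composition of bounded maps, which yields the claimed estimate. The main delicate point is the second step: one must verify that the vanishing of $g'$ on $\S$ does not spoil the isomorphism, and this is precisely why Assumption \ref{asp:g} requires $g(t)-\alpha_0=O(|t-\alpha_0|^{n+2})$ (hence $g'(t)=O(|t-\alpha_0|^{n+1})$) near every $\alpha_0\in\S$ — exactly the vanishing order needed so that the $n'\leq n$ derivatives of $\mathcal{T}\psi$ remain square-integrable after the change of variables.
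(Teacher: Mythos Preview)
Your approach is essentially identical to the paper's: the paper simply remarks that the corollary follows from Theorem \ref{th:Bloch_property} ``by simply changing variables $\alpha=g(t)$'', and your factorization $\widetilde{\J}_{\Omega_H}^{-1}=\J_{\Omega_H}^{-1}\circ\mathcal{T}^{-1}$ together with the substitution $\alpha=g(t)$ is exactly that. Your second step is in fact superfluous, since the paper defines the higher-order spaces $H^{n'}_0(\Wast;H^s_{g(t)}(\Omega^\Lambda_H))$ by transporting the norm through $\mathcal{T}$ (extending the $L^2$-isometry), so $\mathcal{T}^{-1}$ is bounded by definition and no Fa\`a di Bruno argument is needed.
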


The equivalence between \eqref{eq:var_Bloch} and \eqref{eq:var_Bloch_modi_g} is easily proved.
\begin{lemma}
Suppose $\zeta,\,\zeta_p$ are Lipschitz continuous functions. $w\in H_0^{n+1}(\Wast;\widetilde{H}^1_\alpha(\Omega^\Lambda_H))$ is the solution of \eqref{eq:var_Bloch} given any $F\in H_0^{n+1}(\Wast;H^{-1/2}_\alpha(\Gamma^\Lambda_H))$ for some $n\in\N$ if and only if $\mathcal{T}w\in H_0^{n+1}(\Wast;\widetilde{H}^1_{g(t)}(\Omega^\Lambda_H))$ is the solution of \eqref{eq:var_Bloch_modi_g} with  $\widetilde{F}=\mathcal{T}F\in H_0^{n+1}(\Wast;\widetilde{H}^1_{g(t)}(\Omega^\Lambda_H))$.
\end{lemma}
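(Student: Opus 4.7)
The plan is to verify the equivalence by performing the change of variables $\alpha=g(t)$ term by term in the variational identity \eqref{eq:var_Bloch}, together with an isomorphism argument on the function spaces. The main input is Assumption~\ref{asp:g}, which guarantees that $g'\in W^{n+1,\infty}(\overline{\Wast})$ with zeros of order exactly $n+1$ at the points of $\S$, so that $\mathcal{T}\colon H_0^{n+1}(\Wast;\widetilde{H}^1_\alpha(\Omega^\Lambda_H)) \to H_0^{n+1}(\Wast;\widetilde{H}^1_{g(t)}(\Omega^\Lambda_H))$ is a bounded bijection with bounded inverse; the $L^2$ case was already established in the text above, and the extension to $H^{n+1}$ follows by repeatedly applying the chain and Leibniz rules, using that the multiplier $g'$ and all derivatives appearing remain essentially bounded. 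An analogous isomorphism holds for $F$ and $\widetilde F=\mathcal{T}F$ at the $H^{-1/2}_\alpha\to H^{-1/2}_{g(t)}$ level on $\Gamma^\Lambda_H$.

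The key computational step is the identity $\widetilde{\J}_{\Omega_H}^{-1}\,\mathcal{T}\phi=\J_{\Omega_H}^{-1}\phi$. Indeed, from the definitions,
\begin{equation*}
\left(\widetilde{\J}_{\Omega_H}^{-1}\mathcal{T}\phi\right)\!\left(x+\!\begin{pmatrix}\Lambda j\\0\end{pmatrix}\right)
=C_\Lambda\int_\Wast \phi(g(t),x)\,g'(t)\,e^{\i g(t)\Lambda j}\d t
=C_\Lambda\int_\Wast \phi(\alpha,x)\,e^{\i\alpha\Lambda j}\d\alpha,
\end{equation*}
after the substitution $\alpha=g(t)$, which is exactly $(\J_{\Omega_H}^{-1}\phi)(x+(\Lambda j,0)^\T)$. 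Consequently the perturbation term $b(w,z)=\widetilde b(\mathcal{T}w,\mathcal{T}z)$ holds verbatim, since both expressions use the same physical-space function $\J_{\Omega_H}^{-1}w$.

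For the remaining two terms, I would apply the change of variables $\alpha=g(t)$ directly. Using sesquilinearity of $a_\alpha$ and the definition $\widetilde w=\mathcal{T}w=w(g(t),\cdot)g'(t)$, one gets
\begin{equation*}
\int_\Wast a_\alpha(w(\alpha,\cdot),z(\alpha,\cdot))\d\alpha
=\int_\Wast a_{g(t)}\!\left(\tfrac{\widetilde w(t,\cdot)}{g'(t)},\tfrac{\widetilde z(t,\cdot)}{g'(t)}\right)g'(t)\d t
=\widetilde a(\widetilde w,\widetilde z),
\end{equation*}
and identically for the right-hand side,
\begin{equation*}
\int_\Wast\!\!\int_{\Gamma^\Lambda_H}F(\alpha,x)\overline{z(\alpha,x)}\d s\d\alpha
=\int_\Wast\!\!\int_{\Gamma^\Lambda_H}\widetilde F(t,x)\,\overline{\widetilde z(t,x)}\,[g'(t)]^{-1}\d s\d t
=\left<\widetilde F,\widetilde z\right>_{L^2(\Wast;H^{-1/2}_{g(t)}(\Gamma^\Lambda_H))},
\end{equation*}
which coincides with $\langle\widetilde G,\widetilde z\rangle$ after the Riesz identification.

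Finally, since $\mathcal{T}$ is surjective, the test functions $\widetilde z$ range over the full space $L^2(\Wast;\widetilde H^1_{g(t)}(\Omega^\Lambda_H))$ precisely as $z$ ranges over $L^2(\Wast;\widetilde H^1_\alpha(\Omega^\Lambda_H))$, so the quantifier in the weak formulation transfers without loss. The reverse implication uses $\mathcal{T}^{-1}$ in the same way. I expect the only delicate point to be the regularity step, i.e.\ showing that $\mathcal{T}$ preserves the $H_0^{n+1}$-regularity in the $t$-variable: all $k\le n+1$ derivatives of $\mathcal{T}w$ involve, via the chain rule, the factors $g^{(j)}(t)$ for $j\le n+2$ multiplied by $(\partial_\alpha^\ell w)(g(t))$, and Assumption~\ref{asp:g} is calibrated exactly so that these products lie in $L^2(\Wast;\widetilde H^1_{g(t)}(\Omega^\Lambda_H))$ whenever $w\in H_0^{n+1}(\Wast;\widetilde H^1_\alpha(\Omega^\Lambda_H))$.
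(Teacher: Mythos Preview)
Your proof is correct and follows the natural change-of-variables argument $\alpha=g(t)$; the paper itself does not give a proof of this lemma, stating only that the equivalence ``is easily proved,'' so your computation is in fact more detailed than what the paper supplies. The term-by-term transformation you carry out (the identity $\widetilde{\J}_{\Omega_H}^{-1}\mathcal{T}\phi=\J_{\Omega_H}^{-1}\phi$, the sesquilinearity manipulation for $\widetilde a$, and the pairing on the right-hand side) is exactly the intended argument.

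One remark on what you flag as the ``delicate point'': the $H_0^{n+1}$-regularity of $\mathcal{T}w$ is less subtle than you suggest, because in this paper the target spaces $H_0^{n+1}(\Wast;\widetilde H^1_{g(t)}(\Omega^\Lambda_H))$ are \emph{defined} as the image of $\mathcal{T}$ equipped with the pushforward norm (the paper writes ``We can extend the definition of the $L^2$-space to the $H^r$-spaces for $r\in\R$'' after establishing that $\mathcal{T}$ is an isometry at the $L^2$ level). So the isomorphism property is essentially tautological here, and you do not need the chain-rule/Leibniz argument controlling products of $g^{(j)}$ with $\partial_\alpha^\ell w$; that estimate would only be needed if the target space carried an intrinsic norm independent of $\mathcal{T}$.
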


Thus we could obtain the following result as a corollary of Theorem \ref{th:well-posed}.

\begin{theorem}
Suppose $\zeta,\,\zeta_p$ are Lipschitz continuous functions. Then given any $\widetilde{F}\in H_0^{n+1}(\Wast;H^{-1/2}_{g(t)}(\Gamma^\Lambda_H))$, the problem \eqref{eq:var_Bloch_modi_g} is uniquely solvable in $H_0^{n+1}(\Wast;H^{-1/2}_{g(t)}(\Gamma^\Lambda_H))$. Moreover, if $\zeta,\zeta_p\in C^{2,1}(\R)$ and $\widetilde{F}\in H_0^{n+1}(\Wast;H^{1/2}_{g(t)}(\Gamma^\Lambda_H))$, the solution belongs to the space $H_0^{n+1}(\Wast;\widetilde{H}^{2}_{g(t)}(\Gamma^\Lambda_H))$.
\end{theorem}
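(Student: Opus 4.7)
The plan is to prove both claims by pulling the modified problem back to the original Bloch-transformed problem through the substitution $\alpha = g(t)$, and then invoking the well-posedness and regularity results already established in Theorems \ref{th:well-posed} and \ref{th:well-posed_high_order}. The equivalence lemma immediately preceding the theorem is the bridge: $\widetilde{w}$ solves \eqref{eq:var_Bloch_modi_g} with data $\widetilde{F}$ if and only if $w = \mathcal{T}^{-1}\widetilde{w}$ solves \eqref{eq:var_Bloch} with $F = \mathcal{T}^{-1}\widetilde{F}$. What remains is to track how the operator $\mathcal{T}$ interacts with the higher $t$-regularity and with the set $\S$.

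The first technical step is to verify that $\mathcal{T}$ restricts to a bounded isomorphism between $H_0^{n+1}(\Wast; H^s_\alpha(\Omega^\Lambda_H))$ and $H_0^{n+1}(\Wast; H^s_{g(t)}(\Omega^\Lambda_H))$, and analogously between the corresponding boundary spaces on $\Gamma^\Lambda_H$. The $L^2$ case is built into the definition of the target space (the weight $[g'(t)]^{-1}$ exactly compensates for the Jacobian of $\alpha = g(t)$). For higher regularity in $t$, one differentiates $\mathcal{T}\phi(t) = \phi(g(t)) g'(t)$ via Fa\`a di Bruno's formula and the product rule; Assumption \ref{asp:g} bounds all coefficients that appear and, crucially, makes the combined weights vanish to order $n+1$ at each $\alpha_0 \in \S$.

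Armed with this isomorphism, the argument proceeds as follows. Given $\widetilde{F} \in H_0^{n+1}(\Wast; H^{-1/2}_{g(t)}(\Gamma^\Lambda_H))$, the pullback $F = \mathcal{T}^{-1}\widetilde{F}$ inherits, from the order-$(n{+}1)$ vanishing of $g'$ at $\S$, the property that it lies in $H_{00}^{n+1}(\Wast; \S; H^{-1/2}_\alpha(\Gamma^\Lambda_H))$. Theorem \ref{th:well-posed_high_order} then yields a unique $w \in H_0^{n+1}(\Wast; \widetilde{H}^1_\alpha(\Omega^\Lambda_H))$, and $\widetilde{w} := \mathcal{T} w$ is the required solution of \eqref{eq:var_Bloch_modi_g}; uniqueness transfers from uniqueness of $w$. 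For the second assertion, exactly the same reduction together with part 3 of Theorem \ref{th:well-posed} upgrades $w$ from $\widetilde{H}^1_\alpha$ to $\widetilde{H}^2_\alpha$ as soon as $\widetilde{F} \in H_0^{n+1}(\Wast; H^{1/2}_{g(t)}(\Gamma^\Lambda_H))$ and $\zeta, \zeta_p \in C^{2,1}(\R)$; pulling back through $\mathcal{T}$ delivers $\widetilde{w} \in H_0^{n+1}(\Wast; \widetilde{H}^2_{g(t)}(\Omega^\Lambda_H))$.

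The main obstacle is the rigorous justification that $\mathcal{T}^{-1}$ really maps $H_0^{n+1}(\Wast; H^s_{g(t)})$ into $H_{00}^{n+1}(\Wast; \S; H^s_\alpha)$; the division by $g'$ in $\mathcal{T}^{-1}$ is singular precisely at $\S$, so only data $\widetilde{F}$ whose derivatives in $t$ decay fast enough at $\S$ have pullbacks in the correct vanishing subspace. The proof of this step relies essentially on the careful construction of $g$ via Assumption \ref{asp:g}: the order-$(n{+}1)$ vanishing of $g'$ at $\S$ exactly balances the singularity of the inverse change of variables. An approximation argument using smooth functions compactly supported in $\Wast \setminus \S$ (which are dense in both source and target spaces) together with the Fa\`a di Bruno bound from the previous step closes the loop.
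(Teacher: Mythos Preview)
Your proposal takes a substantially longer route than the paper, and in doing so introduces hypotheses and claims that are neither present in the statement nor correct as written.

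The paper treats this theorem as an immediate corollary of Theorem~\ref{th:well-posed} together with the equivalence lemma stated just above: $\widetilde{w}$ solves \eqref{eq:var_Bloch_modi_g} with data $\widetilde{F}=\mathcal{T}F$ if and only if $w=\mathcal{T}^{-1}\widetilde{w}$ solves \eqref{eq:var_Bloch} with data $F$, and Theorem~\ref{th:well-posed} supplies existence, uniqueness, and the $\widetilde{H}^2$ upgrade for the latter. No passage through the vanishing subspace $H_{00}^{n+1}(\Wast;\S;\cdot)$ and no invocation of Theorem~\ref{th:well-posed_high_order} is needed.

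Your argument, by contrast, rests on Theorem~\ref{th:well-posed_high_order}, which requires that $\zeta_p$ be a \emph{slight} perturbation of $\zeta$. The theorem you are proving assumes only that $\zeta,\zeta_p$ are Lipschitz; the smallness hypothesis is absent. So the reduction you propose establishes a strictly weaker result than the one stated.

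Moreover, the central technical claim you flag as the ``main obstacle'' is in fact false in general. You assert that $\mathcal{T}^{-1}$ carries $H_0^{n+1}(\Wast;H^s_{g(t)}(\Gamma^\Lambda_H))$ into $H_{00}^{n+1}(\Wast;\S;H^s_\alpha(\Gamma^\Lambda_H))$. But $(\mathcal{T}^{-1}\widetilde{F})(\alpha,\cdot)=\widetilde{F}(g^{-1}(\alpha),\cdot)\,/\,g'(g^{-1}(\alpha))$ involves \emph{division} by $g'$, which vanishes to order $n{+}1$ at each point of $\S$; the resulting $F$ is typically unbounded near $\S$ (though still in $L^2$ in $\alpha$ by the change of variables), hence certainly not approximable in the $H_0^{n+1}$ norm by functions supported away from $\S$. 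The order-$(n{+}1)$ vanishing of $g'$ works against you here, not for you. The correct reading is that $\mathcal{T}$ is, by construction of the target spaces, an isomorphism from $H_0^{n+1}(\Wast;H^s_\alpha)$ onto $H_0^{n+1}(\Wast;H^s_{g(t)})$---no vanishing subspace enters---and then the theorem is nothing more than the pullback of Theorem~\ref{th:well-posed} through this isomorphism.
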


When $F\in H_{00}^{n+1}(\Wast;\S;H^{-1/2}_\alpha(\Gamma^\Lambda_H))$ and the perturbation $\zeta_p-\zeta$ is small enough, the solution of \eqref{eq:var_Bloch_modi_g} also belongs to a H\"{o}lder space with respect to $\alpha$.

\begin{theorem}\label{th:well_posed_continuous}
Suppose $\zeta,\zeta_p$ satisfy the conditions of Theorem \ref{th:well-posed_high_order}. Given $\widetilde{F}=\mathcal{T}F$ for any $F\in H_{00}^{n+1}(\Wast;\S;H^{-1/2}_\alpha(\Gamma^\Lambda_H))$ where $n\in\N$, then the unique solution $\widetilde{w}\in C_0^{n,1/2}(\Wast;\widetilde{H}^1_{g(t)}(\Omega^\Lambda_H))$.
\end{theorem}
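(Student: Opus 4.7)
The plan is to transfer the high regularity of the unmodified solution $w$ obtained from Theorem~\ref{th:well-posed_high_order} through the change of variables $\alpha=g(t)$, relying on the smoothness of $g$ from Assumption~\ref{asp:g} to control the resulting composition. First I would invoke Theorem~\ref{th:well-posed_high_order}: since $F\in H_{00}^{n+1}(\Wast;\S;H^{-1/2}_\alpha(\Gamma^\Lambda_H))$ and $\zeta_p-\zeta$ is small enough in $W^{1,\infty}$, there is a unique $w\in H_0^{n+1}(\Wast;\widetilde{H}^1_\alpha(\Omega^\Lambda_H))$ solving \eqref{eq:var_Bloch}, and applying the Sobolev embedding of Lemma~\ref{th:embedding} (with $n+1$ in place of $n$) upgrades this to $w\in C_0^{n,1/2}(\Wast;\widetilde{H}^1_\alpha(\Omega^\Lambda_H))$. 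The equivalence lemma stated immediately before the theorem then identifies the solution of \eqref{eq:var_Bloch_modi_g} as $\widetilde{w}=\mathcal{T}w$, i.e.\ $\widetilde{w}(t,\cdot)=w(g(t),\cdot)\,g'(t)$.

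Next, I would differentiate this pointwise identity in $t$ and compute $\partial_t^{j}\widetilde{w}$ for $j\leq n$ via the Leibniz rule combined with Fa\`a di Bruno's formula, writing each derivative as a finite sum of terms of the form $(\partial_\alpha^{m}w)(g(t),\cdot)$ multiplied by products $g^{(i_1)}(t)\cdots g^{(i_\ell)}(t)$ of total order bounded by $j+1$, with $m\leq j\leq n$. Assumption~\ref{asp:g} guarantees $g\in C^{n+1}(\overline{\Wast})$, hence every $g^{(i)}$ with $i\leq n+1$ is bounded and, for $i\leq n$, at least Lipschitz. Combined with the uniform $C^{0,1/2}$ control on $\partial_\alpha^{m}w$ for $m\leq n$ coming from the previous step, this shows that each $\partial_t^{j}\widetilde{w}$, $j\leq n$, is continuous and bounded in $\widetilde{H}^1(\Omega^\Lambda_H)$ uniformly in $t$.

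For the H\"older bound on the top derivative, the leading piece of $\partial_t^{n}\widetilde{w}$ has the form $(\partial_\alpha^{n}w)(g(t),\cdot)\,[g'(t)]^{n+1}$; the composition of a $C^{0,1/2}$ function with a Lipschitz one remains $C^{0,1/2}$, and multiplication by a bounded Lipschitz function preserves $C^{0,1/2}$, so this piece is H\"older-$1/2$ in $t$, while all strictly lower-order summands involve a derivative of $w$ of order strictly less than $n$ (hence Lipschitz in $\alpha$) and are therefore smoother. The $g(t)$-quasi-periodicity of $\widetilde{w}(t,\cdot)$ is inherited from that of $w(g(t),\cdot)$, and $t$-periodicity across $\partial\Wast$ is enforced by the condition $g(\alpha_0)=\alpha_0$ at both endpoints $\alpha_0\in\S\cap\partial\Wast$ from Assumption~\ref{asp:g}. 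The hard part I anticipate is the combinatorial bookkeeping in the Fa\`a di Bruno expansion and the careful verification that the exponent $1/2$ is exactly retained in the top-order product; this ultimately reduces to the same product/Minkowski-type estimate used at the end of the proof of Lemma~\ref{th:embedding}.
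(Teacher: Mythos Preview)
Your proposal is correct and follows essentially the same route as the paper: invoke Theorem~\ref{th:well-posed_high_order} to get $w\in H_0^{n+1}(\Wast;\widetilde{H}^1_\alpha(\Omega^\Lambda_H))$, apply the Sobolev embedding of Lemma~\ref{th:embedding} to obtain $w\in C_0^{n,1/2}$, then transfer this regularity to $\widetilde w=\mathcal{T}w$ via the smoothness of $g$ from Assumption~\ref{asp:g}. The paper compresses the entire composition step into the single line ``As $g\in C^{n+1}_0(\overline{\Wast})$, the composition $w(g(t),\cdot)\in C_0^{n,1/2}(\Wast;\widetilde{H}^1_{g(t)}(\Omega^\Lambda_H))$'', whereas you unpack it with Leibniz and Fa\`a di Bruno and check the H\"older exponent explicitly; your version is a fleshed-out form of the paper's argument rather than a different one. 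One minor remark: the retention of the $1/2$ exponent at top order follows directly from the elementary facts that composition with a Lipschitz map preserves $C^{0,1/2}$ and multiplication by a bounded Lipschitz factor preserves $C^{0,1/2}$, so the Minkowski-type estimate from Lemma~\ref{th:embedding} is not actually needed there.
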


\begin{proof}
Let $w\in H_0^{n+1}(\Wast;\widetilde{H}^1_\alpha(\Omega^\Lambda_H))$ be the unique solution of \eqref{eq:var_Bloch} for some $F\in H_{00}^r(\Wast;\S;H^{-1/2}_\alpha(\Gamma^\Lambda_H))$. Then the unique solution $w\in H_0^{n+1}(\Wast;\widetilde{H}^1_\alpha(\Omega^\Lambda_H))$. From Sobolev embedding proved in Lemma \ref{th:embedding}, $H_0^{n+1}(\Wast;\widetilde{H}^1_\alpha(\Omega^\Lambda_H))\subset C_0^{n,1/2}(\Wast;\widetilde{H}^1_\alpha(\Omega^\Lambda_H))$, thus $w\in C_0^{n,1/2}(\Wast;\widetilde{H}^1_\alpha(\Omega^\Lambda_H))$. As $g\in C^{n+1}_0(\overline{\Wast})$, the composition $w(g(t),\cdot)\in C_0^{n,1/2}(\Wast;\widetilde{H}^1_{g(t)}(\Omega^\Lambda_H))$. 
  Moreover, there is a constant $C>0$ that does not depend on $F$ such that
\begin{equation}
\|\widetilde{w}\|_{ C_0^{n,1/2}(\Wast;\widetilde{H}^1_\alpha(\Omega^\Lambda_H))}\leq C\|F\|_{H_{00}^{n+1}(\Wast;\S;H^{-1/2}_\alpha(\Gamma^\Lambda_H))}.
\end{equation} 
The proof is finished.

\end{proof}

We conclude the above results as follows.

\begin{theorem}\label{th:final_well_posed}
Suppose the following conditions are satisfied:
\begin{itemize}
\item $\zeta,\,\zeta_p\in C^{2,1}(\R)$ and $\|\zeta_p-\zeta\|_{W^{1,\infty}(\R)}$ is small enough.
\item $F\in H_{00}^{n+1}(\Wast;\S;H^{1/2}_\alpha(\Gamma^\Lambda_H))$ and $\widetilde{F}=\widetilde{T}F$.
\item $g$ satisfies Assumption \ref{asp:g} for $n\in\N$.
\end{itemize}
Then there is a unique solution $\widetilde{w}\in H_0^{n+1}(\Wast;\widetilde{H}^2_{g(t)}(\Omega^\Lambda_H))$. Moreover, the unique solution $\widetilde{w}\in C_0^{n,1/2}(\Wast;\widetilde{H}^2_\alpha(\Omega^\Lambda_H))$. 
\end{theorem}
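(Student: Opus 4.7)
The plan is to reduce the statement to the already established well-posedness and regularity results for the untransformed Bloch problem \eqref{eq:var_Bloch}, transport the conclusion to the modified problem via the isomorphism $\mathcal{T}$, and finish with the Sobolev embedding of Lemma \ref{th:embedding}.

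First I would apply the regularity results to the variational problem \eqref{eq:var_Bloch}. Since $F\in H_{00}^{n+1}(\Wast;\S;H^{1/2}_\alpha(\Gamma^\Lambda_H))\subset H_{00}^{n+1}(\Wast;\S;H^{-1/2}_\alpha(\Gamma^\Lambda_H))$ and the perturbation $\|\zeta_p-\zeta\|_{W^{1,\infty}(\R)}$ is small, Theorem \ref{th:well-posed_high_order} yields a unique $w\in H_0^{n+1}(\Wast;\widetilde{H}^1_\alpha(\Omega^\Lambda_H))$. Then, using $\zeta,\zeta_p\in C^{2,1}(\R)$ together with the fact that the data is actually in $H^{1/2}_\alpha$, item 3 of Theorem \ref{th:well-posed} (applied on each $\alpha$-slice and combined with the existing $\alpha$-regularity coming from Theorem \ref{th:well-posed_high_order}) upgrades the spatial regularity to $w\in H_0^{n+1}(\Wast;\widetilde{H}^2_\alpha(\Omega^\Lambda_H))$.

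Second, I would transport this to $\widetilde{w}=\mathcal{T}w$. The equivalence lemma preceding Theorem \ref{th:well_posed_continuous} guarantees that $\widetilde{w}$ is the unique solution of \eqref{eq:var_Bloch_modi_g} and already establishes the $\widetilde{H}^1_{g(t)}$ version of the claim. To obtain the $\widetilde{H}^2_{g(t)}$ statement, I would differentiate the identity $\widetilde{w}(t,\cdot)=w(g(t),\cdot)g'(t)$ up to order $n+1$ in $t$ by the Faà di Bruno / Leibniz formula: each term is a product of derivatives of $w$ composed with $g$ and of derivatives of $g$. Since Assumption \ref{asp:g} delivers $g\in C^{n+1}(\overline{\Wast})$ and $g'\in W^{n+1,\infty}(\overline{\Wast})$, the spatial $\widetilde{H}^2$-norm of every such term can be controlled by $\|w\|_{H_0^{n+1}(\Wast;\widetilde{H}^2_\alpha)}$ multiplied by bounded factors involving $g$, and integration in $t$ against the weight $[g'(t)]^{-1}$ returns exactly the norm of $L^2(\Wast;\widetilde{H}^2_{g(t)})$. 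The critical point is the behavior at $\alpha_0\in\S$: the prescribed vanishing $g(t)=\alpha_0+O(|t-\alpha_0|^{n+2})$, hence $g'(t)=O(|t-\alpha_0|^{n+1})$, is precisely calibrated so that the weight $[g'(t)]^{-1}$ does not destroy integrability when combined with the fact that $F$, and hence $w$, vanishes in a neighbourhood of $\S$.

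Finally, the H\"older statement $\widetilde{w}\in C_0^{n,1/2}(\Wast;\widetilde{H}^2)$ is a direct application of Lemma \ref{th:embedding} with $S(W)=\widetilde{H}^2(\Omega^\Lambda_H)$, which gives the embedding $H^{n+1}\subset C^{n,1/2}$ in the $t$-variable; the equivalence of the $H^m_{g(t)}$ and $H^m_\alpha$ spaces on compact subintervals of $\Wast\setminus\S$ (where $g'$ is bounded away from $0$) then promotes the conclusion to $\widetilde{H}^2_\alpha$ away from the critical points, and the vanishing of $\widetilde{w}$ near $\S$ patches the rest. The main obstacle I anticipate is precisely this bookkeeping at the points of $\S$, but the exponent $n+2$ built into Assumption \ref{asp:g} is designed exactly to match the order $n+1$ of differentiation appearing in Faà di Bruno, so the estimate closes.
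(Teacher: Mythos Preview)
Your overall architecture---reduce to the untransformed Bloch problem, invoke Theorems \ref{th:well-posed} and \ref{th:well-posed_high_order}, then push forward via $\mathcal{T}$ and finish with Lemma \ref{th:embedding}---is exactly the structure the paper uses; Theorem \ref{th:final_well_posed} is presented there as a summary of the preceding equivalence lemma and two theorems, with no independent proof.

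There is, however, one concrete gap in your execution. You write that ``$F$, and hence $w$, vanishes in a neighbourhood of $\S$.'' Neither assertion is justified. Membership of $F$ in $H_{00}^{n+1}(\Wast;\S;\cdot)$ means only that $F$ is a \emph{limit} of functions vanishing near $\S$; it does not force $F$ itself to vanish on any open set. And even when the data do vanish near $\S$, the solution $w=(\A+\B_p)^{-1}G$ is not expected to inherit this: the perturbation operator $\B_p$ involves $\J_{\Omega_H}^{-1}$ and is nonlocal in $\alpha$, so it couples different quasi-periodicity parameters. Theorem \ref{th:well-posed_high_order} delivers only $w\in H_0^{n+1}$, not $w\in H_{00}^{n+1}$. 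Since you rely on this vanishing to tame the weight $[g'(t)]^{-1}$ in the Fa\`a di Bruno bookkeeping, that step does not close as written.

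The paper sidesteps this entirely. The weighted spaces $H_0^{n+1}(\Wast;\widetilde H^s_{g(t)}(\Omega^\Lambda_H))$ are introduced precisely so that $\mathcal{T}$ is an isomorphism by construction; thus $\widetilde w\in H_0^{n+1}(\Wast;\widetilde H^2_{g(t)})$ follows tautologically from $w\in H_0^{n+1}(\Wast;\widetilde H^2_\alpha)$, and no Fa\`a di Bruno expansion is needed. For the H\"older statement the paper (see the proof of Theorem \ref{th:well_posed_continuous}) applies Lemma \ref{th:embedding} to $w$ rather than to $\widetilde w$, obtaining $w\in C_0^{n,1/2}(\Wast;\widetilde H^2_\alpha)$, and only then composes with $g\in C^{n+1}(\overline{\Wast})$ and multiplies by $g'\in C^n\cap W^{n+1,\infty}$; this elementary chain rule preserves $C^{n,1/2}$-regularity in $t$ and requires no information whatsoever about the behaviour of $w$ at the points of $\S$. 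If you reorganise your third and fourth paragraphs along these lines, the patching argument at the critical points becomes unnecessary and the proof goes through cleanly.
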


We could return to the two problems we have mentioned in Section \ref{sec:exp}. The first is the truncated Dirichlet-to-Neumann map. Define $T^M_{g(t)}$ by 
\begin{equation}
T^M_{g(t)}\widetilde{\phi}:=\i\sum_{j=-M}^M\sqrt{k^2-|\Lambda^*j-g(t)|^2}\widetilde{\phi}(g(t),j)e^{\i(\Lambda^*j-g(t))x_1}g'(t)
\end{equation}
for $\widetilde{\phi}=\phi(g(t),\cdot)g'(t)$. As $g'(-\underline{k},\Lambda^*-\underline{k})=0$, no matter which positive integer $M$ is chosen, for any $\widetilde{\phi}$, 
\begin{equation}
T^M_{g(t)}\widetilde{\phi}(t,\cdot)\Big|_{t=-\underline{k}}=T^M_{g(t)}\widetilde{\phi}(t,\cdot)\Big|_{t=\Lambda^*-\underline{k}}.
\end{equation}
Thus the periodicity of $T^M_{g(t)}\phi$ is guaranteed. 

The second problem is the high oscillation of the derivative of $w(\alpha,\cdot)$ for $\alpha$ in the neighbourhood of $\S$. 
\begin{equation}
\frac{\partial\widetilde{w}(t,x)}{\partial t}=\frac{\partial w(g(t),\cdot)}{\partial_\alpha}\left[g'(t)\right]^2+w(g(t),\cdot)g''(t).
\end{equation}
From $g(t)=\alpha_0+O(|t-\alpha_0|^{n+2})$, $g'(t)=O(|t-\alpha_0|^{n+1})$ at the points of $\S$. As $g'(t)$ tends to $0$ as $t\rightarrow \alpha_0$ for any $\alpha_0\in\S$, $\frac{\partial\widetilde{w}(t,\cdot)}{\partial t}$ also tends to $0$. This implies that the value of the derivative will not be very large in the neighbourhood of $\alpha_0$, thus it is reasonable that the oscillation will not be too large. So it is possible that the numerical method based on the modified problems will possess a good performance.

\section{The finite section method}

For numerical simulations in unbounded domains, a cutoff technique is always highly demanded. For rough surface scattering problem, the finite section method (see \cite{Chand2010}) was discussed. The convergence of the finite section method depends on the decay index $r$ for the incident field. In this section, we apply the finite section method to the original variational problem \eqref{eq:var_Bloch} and develop higher convergence rate for the special cases, then with the help of the change of variables, to extend the finite section method to \eqref{eq:var_Bloch_modi_g}.

For any $L\in\N$ large enough, define the finite domain by
\begin{equation}
\Omega^{L,\Lambda}_H:=\bigcup_{j=-L+1}^L\Big[\Omega^\Lambda_H+\left(\begin{smallmatrix}
\Lambda j\\0
\end{smallmatrix}
\right)\Big].
\end{equation}
Define the cutoffed functions
\begin{equation}
A_p^L(x):=\begin{cases}
A_p(x),\quad\text{ if }x\in\Omega^{L,\Lambda}_H; \\
I_2,\quad\text{ otherwise;}
\end{cases},\quad
c_p^L(x):=\begin{cases}
c_p(x),\quad\text{ if }x\in\Omega^{L,\Lambda}_H; \\
1,\quad\text{ otherwise.}
\end{cases}
\end{equation}
Thus both of the functions $A_p^L-I_2$ and $c_p^L-1$ are  compactly supported in the bounded domain $\Omega^{L,\Lambda}_H$.  Replace $A_p$ and $c_p$ by $A_p^L$ and $c_p^L$ in \eqref{eq:var_Bloch}, the only term to be changed is $b(w,z)$, denoted by $b^L(w,z)$. In this case, this term becomes
\begin{equation}
\begin{aligned}
b^L(w,z)&=\int_{\Omega_H}\Big[ (A_p^L-I_2)\grad( \J_{\Omega_H}^{-1} w)\cdot\grad\overline{\left(\J_{\Omega_H}^{-1} \overline{z}\right)}-k^2 (c_p^L-1)(\J_{\Omega_H}^{-1}w)\cdot\overline{\left(\J_{\Omega_H}^{-1} \overline{z}\right)}\Big]\d x\\
&=\int_{\Omega_H^{L,\Lambda}}\Big[ (A_p-I_2)\grad( \J_{\Omega_H}^{-1} w)\cdot\grad\overline{\left(\J_{\Omega_H}^{-1} \overline{z}\right)}-k^2 (c_p-1)(\J_{\Omega_H}^{-1}w)\cdot\overline{\left(\J_{\Omega_H}^{-1} \overline{z}\right)}\Big]\d x.
\end{aligned}
\end{equation}
Apply the property of the Bloch transform, i.e., it commutes with the gradient operator and $\J_{\Omega_H}^{-1}=\J_{\Omega_H}^*$, 
\begin{equation}
\begin{aligned}
b^L(w,z)=&\int_\Wast\int_{\Omega^\Lambda_H}\J_{\Omega_H}\left((A_p^L-I_2)\grad\left(\J_{\Omega_H}^{-1}w\right)\right)\cdot\overline{\grad z}\d x\d\alpha\\
&-k^2 \int_\Wast\int_{\Omega^\Lambda_H}\J_{\Omega_H}\left((c_p^L-1)\left(\J_{\Omega_H}^{-1}w\right)\right)\cdot\overline{z}\d x\d\alpha.
\end{aligned}
\end{equation}
From similar analysis of $b(w,z)$ in \cite{Zhang2018e}, $b^L(w,z)$ is a bounded linear operator in  $H_0^n(\Wast;\widetilde{H}^1_\alpha(\Omega^\Lambda_H))\times H_0^{-n}(\Wast;\widetilde{H}^1_\alpha(\Omega^\Lambda_H))$. The estimate between $b$ and $b^L$ is described in the following lemma. 

\begin{lemma}\label{th:error_cut}
When $L$ is large enough, for any $w\in H_0^n(\Wast;\widetilde{H}^1_\alpha(\Omega^\Lambda_H))$ and $z\in L^2(\Wast;\widetilde{H}^1_\alpha(\Omega^\Lambda_H))$, there is a constant $C>0$ that does not depend on $L$ such that
\begin{equation}
\left|b^L(w,z)-b(w,z)\right|\leq C|L\Lambda|^{-n}\|w\|_{H_0^n(\Wast;\widetilde{H}^1_\alpha(\Omega^\Lambda_H))} \|z\|_{L^2(\Wast;\widetilde{H}^1_\alpha(\Omega^\Lambda_H))}.
\end{equation}
\end{lemma}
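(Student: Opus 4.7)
The plan is to isolate the integration region where $b - b^L$ is supported, apply Cauchy--Schwarz, and exploit the fact that the higher regularity of $w$ with respect to $\alpha$ translates, via the inverse Bloch transform, into polynomial decay of $\J_{\Omega_H}^{-1} w$ in the $x_1$-direction.

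First I would write the difference explicitly. Since $A_p^L$ coincides with $A_p$ on $\Omega^{L,\Lambda}_H$ and equals $I_2$ elsewhere, and likewise for $c_p^L$, the contributions on $\Omega^{L,\Lambda}_H$ cancel, and outside $\Omega_{H_0}$ both integrands vanish. Thus
\begin{equation*}
b(w,z) - b^L(w,z) = \int_{\Omega_H \setminus \Omega^{L,\Lambda}_H}\Big[(A_p-I_2)\grad(\J_{\Omega_H}^{-1} w)\cdot\grad\overline{\J_{\Omega_H}^{-1} z} - k^2(c_p-1)(\J_{\Omega_H}^{-1} w)\,\overline{\J_{\Omega_H}^{-1} z}\Big]\d x.
\end{equation*}
Using the $L^\infty$ bounds on $A_p - I_2$ and $c_p - 1$ together with Cauchy--Schwarz, the modulus is controlled by a constant times
\begin{equation*}
\|\J_{\Omega_H}^{-1} w\|_{H^1(\Omega_H\setminus\Omega^{L,\Lambda}_H)} \cdot \|\J_{\Omega_H}^{-1} z\|_{H^1(\Omega_H\setminus\Omega^{L,\Lambda}_H)}.
\end{equation*}

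Next I would use the mapping property of the inverse Bloch transform (the unmodified analog of Corollary \ref{th:prop_modi_bloch}): $\J_{\Omega_H}^{-1}$ maps $H_0^n(\Wast;\widetilde{H}^1_\alpha(\Omega^\Lambda_H))$ continuously into the weighted Sobolev space $H^1_n(\Omega_H)$ whose norm carries the weight $(1+x_1^2)^{n/2}$. On the complement $\Omega_H \setminus \Omega^{L,\Lambda}_H$ the inverse weight satisfies $(1+x_1^2)^{-n} \le C|L\Lambda|^{-2n}$, hence
\begin{equation*}
\|\J_{\Omega_H}^{-1} w\|_{H^1(\Omega_H\setminus\Omega^{L,\Lambda}_H)} \le C|L\Lambda|^{-n}\|\J_{\Omega_H}^{-1} w\|_{H^1_n(\Omega_H)} \le C|L\Lambda|^{-n}\|w\|_{H_0^n(\Wast;\widetilde{H}^1_\alpha(\Omega^\Lambda_H))}.
\end{equation*}
For the test function $z$ only $L^2$-regularity in $\alpha$ is assumed, so the usual boundedness of the inverse Bloch transform gives $\|\J_{\Omega_H}^{-1} z\|_{H^1(\Omega_H)} \le C\|z\|_{L^2(\Wast;\widetilde{H}^1_\alpha(\Omega^\Lambda_H))}$, and I would trivially dominate the restricted norm by this one. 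Multiplying the two estimates yields the claim.

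The only nonroutine ingredient is the decay statement for $\J_{\Omega_H}^{-1} w$, which is precisely the reason why higher-order regularity in $\alpha$ is valuable: it is the mechanism by which the finite section error acquires the rate $|L\Lambda|^{-n}$ rather than only $|L\Lambda|^{-1}$ as in the classical rough-surface analysis. Once this is invoked, the rest is Cauchy--Schwarz and the fact that the supports of $A_p - A_p^L$ and $c_p - c_p^L$ are automatically contained in $\Omega_H\setminus\Omega^{L,\Lambda}_H$. No subtleties arise near the top boundary $\Gamma_H$ because the coefficients $A_p - I_2$ and $c_p - 1$ are already supported strictly below $\Gamma_H$.
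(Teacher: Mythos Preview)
Your argument is correct and follows essentially the same route as the paper: both localize the difference to the tail region $\Omega_H\setminus\Omega^{L,\Lambda}_H$, invoke the $L^\infty$ bounds on $A_p-I_2$ and $c_p-1$, and then use the isomorphism property of the Bloch transform (Theorem~\ref{th:Bloch_property}) to convert $H_0^n$-regularity in $\alpha$ into polynomial decay of $\J_{\Omega_H}^{-1}w$ and extract the factor $|L\Lambda|^{-n}$ via the weight $(1+x_1^2)^{-n}$ on the tail. The only cosmetic difference is that the paper first argues for smooth $w,z$ and then passes to the limit by density, whereas you work directly in the Sobolev spaces; both are fine.
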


\begin{proof}
First, let $w,z\in C^\infty(\Wast\times\Omega^\Lambda_H)$ that are periodic in $\alpha$ and $\alpha$-quasi-periodic in $x$ and vanishes on $\Wast\times\Gamma^\Lambda$. Then $w\in H_0^n(\Wast;L^2(\Omega^\Lambda_H))$, thus $\J_{\Omega_H}^{-1}w\in H_r^0(\Omega_H)$, i.e.,
\begin{equation}
\left\|\J_{\Omega_H}^{-1}w\right\|^2_{H_r^0(\Omega_H)}=\int_{\Omega_H}(1+|x|^2)^n\left|\J_{\Omega_H}^{-1}w\right|^2\d x<\infty.
\end{equation}
The difference between $b^L\cdot,\cdot)$ and $b(\cdot,\cdot)$ has the following representation
\begin{equation}
b^L(w,z)-b(w,z)=\int_{\Omega_H\setminus\Omega^{L,\Lambda}_H}\Big[\J_{\Omega_H}\left ((A_p-I_2)\grad( \J_{\Omega_H}^{-1} w)\right)\cdot\grad\overline{z}-k^2 \J_{\Omega_H}\left ((c_p-1)(\J_{\Omega_H}^{-1}w)\right)\cdot\overline{z}\Big]\d x.
\end{equation}
Take the second term as an example.
\begin{equation}
\begin{aligned}
\left\| (c_p-1)(\J_{\Omega_H}^{-1}w)\right\|^2_{L^2(\Omega_H\setminus\Omega^{L,\Lambda}_H)}\leq& C\left\|\J_{\Omega_H}^{-1}w\right\|^2_{L^2(\Omega_H\setminus\Omega^{L,\Lambda}_H)}=C\int_{\Omega_H\setminus\Omega^{L,\Lambda}_H}\left|\J_{\Omega_H}^{-1}w\right|^2\d x\\
\leq & C\sup_{x\in \Omega_H\setminus\Omega^{L,\Lambda}_H}(1+|x|^2)^{-n} \int_{\Omega_H\setminus\Omega^{L,\Lambda}_H}(1+|x|^2)^n\left|\J_{\Omega_H}^{-1}w\right|^2\d x\\
\leq & C|L\Lambda|^{-2n}\left\| \J_{\Omega_H}^{-1}w\right\|^2_{H_r^0(\Omega_H)}\leq C|N\Lambda|^{-2n}\|w\|^2_{H_0^n(\Wast;L^2(\Omega^\Lambda_H))}.
\end{aligned}
\end{equation}
Thus
\begin{equation}
\left|\int_{\Omega_H\setminus\Omega^{L,\Lambda}_H}\J_{\Omega_H}\left ((c_p-1)(\J_{\Omega_H}^{-1}w)\right)\cdot\overline{z}\d x\right|\leq C|L\Lambda|^{-n}\|w\|_{H_0^n(\Wast;L^2(\Omega^\Lambda_H))}\|z\|_{L^2(\Wast;L^2(\Omega^\Lambda_H))}.
\end{equation}
From similar argument, we can also estimate the first term, i.e.,
\begin{equation}
\left|\J_{\Omega_H}\left ((A_p-I_2)\grad( \J_{\Omega_H}^{-1} w)\right)\cdot\grad\overline{z}\right|\leq C|L\Lambda|^{-n}\|w\|_{H_0^n(\Wast;L^2(\Omega^\Lambda_H))}\|z\|_{L^2(\Wast;L^2(\Omega^\Lambda_H))}.
\end{equation}
Thus 
\begin{equation}
\left|b^L(w,z)-b(w,z)\right|\leq C|L\Lambda|^{-n}\|w\|_{H_0^n(\Wast;L^2(\Omega^\Lambda_H))}\|z\|_{L^2(\Wast;L^2(\Omega^\Lambda_H))}.
\end{equation}
From the denseness of $C^\infty(\Wast\times\Omega^\Lambda_H)$, the result could be easily extended to $w\in H_0^n(\Wast;\widetilde{H}^1_\alpha(\Omega^\Lambda_H))$ and $z\in L^2(\Wast;\widetilde{H}^1_\alpha(\Omega^\Lambda_H))$. The proof is finished.
\end{proof}

 We introduce the new variational problem, i.e., to find a $w^L\in H_0^n(\Wast;\widetilde{H}^1_\alpha(\Omega^\Lambda_H))$ such that
\begin{equation}\label{eq:var_Bloch_cutoff}
\int_\Wast a_\alpha(w^L(\alpha,\cdot),z(\alpha,\cdot))\d\alpha+b^L(w^L,z)=\int_\Wast\int_{\Gamma^\Lambda_H}F(\alpha,x)\overline{z(\alpha,x)}\d s\d\alpha.
\end{equation} 

\begin{theorem}\label{th:cutoff_problem_error}
Suppose $\zeta,\zeta_p$ satisfy conditions in Theorem \ref{th:well-posed_high_order}. When $L\in\N$ is large enough, given an $F\in H_{00}^n(\Wast;\S;H^{-1/2}_\alpha(\Gamma^\Lambda_H))$, there is a unique $w^L\in H_0^n(\Wast;\widetilde{H}^1_\alpha(\Omega^\Lambda_H))$ satisfies \eqref{eq:var_Bloch_cutoff}.
Moreover, there is a $C>0$ that does not depend on $L$ such that.
\begin{equation}
\|w^L-w\|_{L^2(\Wast;\widetilde{H}^1_\alpha(\Omega^\Lambda_H))}\leq C|L\Lambda|^{-n}\|w\|_{H_0^n(\Wast;\widetilde{H}^1_\alpha(\Omega^\Lambda_H))}
\end{equation}
\end{theorem}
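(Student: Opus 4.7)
The plan is to extract $w^L$ from a companion scattering problem, derive a variational equation for the error $w^L-w$, and then invoke Lemma~\ref{th:error_cut}.

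First I would observe that replacing $(A_p,c_p)$ by $(A_p^L,c_p^L)$ in~\eqref{eq:var_Bloch} corresponds to the scattering problem from the cutoff surface $\zeta_p^L$ which equals $\zeta_p$ on $|x_1|\le L\Lambda$ and equals $\zeta$ elsewhere, viewed through a diffeomorphism $\Theta_p^L$ that agrees with $\Theta_p$ on $\Omega_H^{L,\Lambda}$ and with the identity outside. Since $\|\zeta_p^L-\zeta\|_{W^{1,\infty}(\R)}\le\|\zeta_p-\zeta\|_{W^{1,\infty}(\R)}$ and the Lipschitz constants are uniformly controlled in $L$, the smallness hypothesis of Theorem~\ref{th:well-posed_high_order} transfers verbatim. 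This gives, for $F\in H_{00}^n(\Wast;\S;H_\alpha^{-1/2}(\Gamma^\Lambda_H))$ and $L$ large enough, a unique $w^L\in H_0^n(\Wast;\widetilde{H}^1_\alpha(\Omega^\Lambda_H))$ solving~\eqref{eq:var_Bloch_cutoff}.

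Next I would derive the equation for the error. Subtracting~\eqref{eq:var_Bloch} from~\eqref{eq:var_Bloch_cutoff} and regrouping the $b$-terms as $b^L(w^L,z)-b(w,z)=b^L(w^L-w,z)+(b^L-b)(w,z)$, we obtain, for every test function $z\in L^2(\Wast;\widetilde{H}^1_\alpha(\Omega^\Lambda_H))$,
\begin{equation*}
\int_\Wast a_\alpha\bigl(w^L-w,z(\alpha,\cdot)\bigr)\d\alpha + b^L(w^L-w,z)=(b-b^L)(w,z).
\end{equation*}
Hence $w^L-w$ solves the cutoff variational problem with right-hand side the functional $R(z):=(b-b^L)(w,z)$. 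Since $w\in H_0^n(\Wast;\widetilde{H}^1_\alpha(\Omega^\Lambda_H))$ by Theorem~\ref{th:well-posed_high_order}, Lemma~\ref{th:error_cut} gives the functional bound
\begin{equation*}
|R(z)|\le C|L\Lambda|^{-n}\|w\|_{H_0^n(\Wast;\widetilde{H}^1_\alpha(\Omega^\Lambda_H))}\|z\|_{L^2(\Wast;\widetilde{H}^1_\alpha(\Omega^\Lambda_H))}.
\end{equation*}

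Finally I would use the $L^2$-level well-posedness of the cutoff variational form, which follows from Theorem~\ref{th:well-posed}(2) applied with $r=0$ to the surface $\zeta_p^L$: the operator $\A+\B_p^L$ is boundedly invertible on the Hilbert space $L^2(\Wast;\widetilde{H}^1_\alpha(\Omega^\Lambda_H))$. Combining this with the bound on $R$ yields the claimed estimate $\|w^L-w\|_{L^2(\Wast;\widetilde{H}^1_\alpha(\Omega^\Lambda_H))}\le C|L\Lambda|^{-n}\|w\|_{H_0^n(\Wast;\widetilde{H}^1_\alpha(\Omega^\Lambda_H))}$. The main obstacle I anticipate is ensuring that the $L^2$-inverse norm of $\A+\B_p^L$ is independent of $L$; I would handle this by tracking the constants in the proofs of Theorems~\ref{th:well-posed} and~\ref{th:well-posed_high_order} and verifying that they depend only on the (uniformly bounded) Lipschitz constant of $\zeta_p^L$ and on $\|\zeta_p^L-\zeta\|_{W^{1,\infty}(\R)}$, both of which are controlled uniformly in $L$ by the corresponding quantities for $\zeta_p$.
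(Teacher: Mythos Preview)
Your proposal is correct and follows essentially the same strategy as the paper: establish well-posedness of the cutoff problem via the smallness of the perturbation (the paper argues directly through $\|A_p^L-I_2\|_\infty\le\|A_p-I_2\|_\infty$ and $\|c_p^L-1\|_\infty\le\|c_p-1\|_\infty$ rather than through a cutoff surface $\zeta_p^L$, but the content is the same), subtract the two variational problems to get an equation for $w^L-w$ with right-hand side $(b-b^L)(w,z)$, bound that right-hand side by Lemma~\ref{th:error_cut}, and conclude via the uniform-in-$L$ solvability. Your explicit attention to the $L$-independence of the inverse norm is a point the paper leaves implicit.
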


\begin{proof}
From the proof of Theorem 24, \cite{Zhang2018e}, when $\zeta_p-\zeta$ is sufficiently small, i.e., when $\|A_p-I_2\|_\infty,\,\|c_p-1\|_\infty$ are small enough, the problem is uniquely solvable in $H_0^n(\Wast;\widetilde{H}^1_\alpha(\Omega^\Lambda_H))$. From the definition of the cutoffed functions $A_p^L$ and $c_p^L$, when \eqref{eq:var_Bloch} is uniquely solvable, the problem \eqref{eq:var_Bloch_cutoff} is also uniquely solvable.  Moreover, there is a constant $C>0$ that does not depend on $L$ such that
\begin{equation}
\|w^L\|_{H_0^n(\Wast;\widetilde{H}^1_\alpha(\Omega^\Lambda_H))}\leq C\|F\|_{H_{00}^n(\Wast;\S;H^{-1/2}_\alpha(\Gamma^\Lambda_H))}.
\end{equation}
 Subtracting \eqref{eq:var_Bloch} from \eqref{eq:var_Bloch_cutoff} and let $w_D:=w^L-w$, then $w_D$ satisfies
\begin{equation}
\int_\Wast a_\alpha(w_D(\alpha,\cdot),z(\alpha,\cdot))\d\alpha+b^L(w_D,z)=b(w,z)-b^L(w,z).
\end{equation}
From Lemma \ref{th:error_cut}, the right hand side is bounded by
\begin{equation}
\left|b(w,z)-b^L(w,z)\right|\leq C|L\Lambda|^{-n}\|w\|_{H_0^n(\Wast;\widetilde{H}^1_\alpha(\Omega^\Lambda_H))}\|z\|_{L^2(\Wast;\widetilde{H}^1_\alpha(\Omega^\Lambda_H))}.
\end{equation}
Thus $w_D$ is the unique solution satisfies
\begin{equation}
\|w_D\|_{L^2(\Wast;\widetilde{H}^1_\alpha(\Omega^\Lambda_H))}\leq C|L\Lambda|^{-n}\|w\|_{H_0^n(\Wast;\widetilde{H}^1_\alpha(\Omega^\Lambda_H))}.
\end{equation}
The proof is finished.
\end{proof}

Theorem \ref{th:cutoff_problem_error} shows that the solution to the new problem \eqref{eq:var_Bloch_cutoff} is a good approximation of the original one, especially when $r$ is large. Now we are prepared to apply the truncation to the modified problem \eqref{eq:var_Bloch_modi_g}. We define the truncated sesquilinear form $\widetilde{b}^L(\cdot,\cdot)$ by
\begin{equation}
\widetilde{b}^L(\widetilde{w},\widetilde{z})=\int_{\Omega_H^{L,\Lambda}}\left[ (A_p-I_2)\grad\left( \widetilde{\J}_{\Omega_H}^{-1} \widetilde{w}\right)\cdot\grad\overline{\left(\widetilde{\J}_{\Omega_H}^{-1} {\widetilde{z}}\right)}-k^2 (c_p-1)\left(\widetilde{\J}_{\Omega_H}^{-1}\widetilde{w}\right)\cdot\overline{\left(\widetilde{\J}_{\Omega_H}^{-1} {\widetilde{z}}\right)}\right]\d x.
\end{equation}
Consider the variational problem 
\begin{equation}\label{eq:var_Bloch_modi_finite}
\widetilde{a}(\widetilde{w}^L,\widetilde{z})+\widetilde{b}^L(\widetilde{w}^L,\widetilde{z})=\left<\widetilde{F},\widetilde{z}\right>,
\end{equation}
for any $\widetilde{z}\in L^2(\Wast;\widetilde{H}^1_{g(t)}(\Omega^\Lambda_H))$. Thus this is equivalent to \eqref{eq:var_Bloch_cutoff} by simply let $\alpha=g(t)$. Thus we have the following well-posedness property of this new variational problem.

\begin{theorem}\label{th:cutoff_err}
Suppose all the  conditions of Theorem \ref{th:final_well_posed} are satisfied. For sufficient large $L$, the variational problem \eqref{eq:var_Bloch_modi_finite} is uniquely solvable. Moreover, the unique solution $\widetilde{w}^L\in C_0^{n,1/2}(\Wast;\widetilde{H}^2_{g(t)}(\Omega^\Lambda_H))$ satisfies
\begin{equation}
\|\widetilde{w}^L-\widetilde{w}\|_{L^2(\Wast;\widetilde{H}^1_{g(t)}(\Omega^\Lambda_H))}\leq C|L\Lambda|^{-n}\|\widetilde{w}\|_{H_0^n(\Wast;\widetilde{H}^1_{g(t)}(\Omega^\Lambda_H))}.
\end{equation}
\end{theorem}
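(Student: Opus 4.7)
The plan is to exploit the isometric isomorphism $\mathcal{T}\colon L^2(\Wast;H^s_\alpha(\Omega^\Lambda_H))\to L^2(\Wast;H^s_{g(t)}(\Omega^\Lambda_H))$ (with its analogues on the Bloch side) already used to pass from \eqref{eq:var_Bloch} to \eqref{eq:var_Bloch_modi_g}, and reduce the statement to the unmodified cutoff problem \eqref{eq:var_Bloch_cutoff}, for which Theorem \ref{th:cutoff_problem_error} is available.

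First I would verify that, under the substitution $\alpha=g(t)$, a function $\widetilde{w}^L\in L^2(\Wast;\widetilde{H}^1_{g(t)}(\Omega^\Lambda_H))$ solves \eqref{eq:var_Bloch_modi_finite} if and only if $w^L:=\mathcal{T}^{-1}\widetilde{w}^L$ solves \eqref{eq:var_Bloch_cutoff} with right-hand side $F=\mathcal{T}^{-1}\widetilde{F}$. This is a bookkeeping step: in $\widetilde{a}$ and $\widetilde{b}^L$ the weights $g'(t)^{-1}$ from the inner product cancel against the Jacobian factors $g'(t)$ carried by $\widetilde{w}^L=\mathcal{T}w^L$; the spatial truncation $\Omega^{L,\Lambda}_H$ is unaffected since the change of variables acts only in the quasi-periodicity parameter.

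Next, since the hypotheses of Theorem \ref{th:final_well_posed} imply those of Theorem \ref{th:cutoff_problem_error} (with $n$ replaced by $n+1$), that theorem yields a unique solution $w^L\in H_0^{n+1}(\Wast;\widetilde{H}^1_\alpha(\Omega^\Lambda_H))$ for $L$ large enough, together with
\[
\|w^L-w\|_{L^2(\Wast;\widetilde{H}^1_\alpha(\Omega^\Lambda_H))}\leq C|L\Lambda|^{-n}\|w\|_{H_0^n(\Wast;\widetilde{H}^1_\alpha(\Omega^\Lambda_H))}.
\]
Applying $\mathcal{T}$ to both sides and using its isometry property gives the claimed error bound for $\widetilde{w}^L-\widetilde{w}$. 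Well-posedness of \eqref{eq:var_Bloch_modi_finite} follows from the equivalence.

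The one nonroutine point, and where I expect the main work to lie, is the regularity claim $\widetilde{w}^L\in C_0^{n,1/2}(\Wast;\widetilde{H}^2_{g(t)}(\Omega^\Lambda_H))$. For this I would argue that since $A_p^L-I_2$ and $c_p^L-1$ have the same $L^\infty$ bound as $A_p-I_2$ and $c_p-1$, the smallness hypothesis that underlies Theorem \ref{th:well-posed_high_order} is preserved by truncation; hence the same fixed-point/Neumann-series argument gives $w^L\in H_0^{n+1}(\Wast;\widetilde{H}^2_\alpha(\Omega^\Lambda_H))$ when $F\in H_{00}^{n+1}(\Wast;\S;H^{1/2}_\alpha(\Gamma^\Lambda_H))$. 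Lemma \ref{th:embedding} then places $w^L$ in $C_0^{n,1/2}(\Wast;\widetilde{H}^2_\alpha(\Omega^\Lambda_H))$. Finally, since $g\in C^{n+1}(\overline{\Wast})$ by Assumption \ref{asp:g}, the composition $t\mapsto w^L(g(t),\cdot)g'(t)$ stays in $C_0^{n,1/2}$ (the chain rule produces only products of $C^n$ factors with $C^{n,1/2}$ functions), which gives the desired regularity of $\widetilde{w}^L=\mathcal{T}w^L$. This last composition step is the part most likely to require care to avoid losing the H\"older exponent at the critical points of $\S$, but it is guaranteed by the vanishing of $g'$ at $\S$ built into Assumption \ref{asp:g}.
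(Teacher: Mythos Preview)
Your proposal is correct and matches the paper's approach: the paper does not write out a proof of this theorem, but immediately before the statement observes that \eqref{eq:var_Bloch_modi_finite} ``is equivalent to \eqref{eq:var_Bloch_cutoff} by simply let $\alpha=g(t)$,'' so the result is read off from Theorem~\ref{th:cutoff_problem_error} via the isometry $\mathcal{T}$, exactly as you outline. Your additional justification of the $C_0^{n,1/2}(\Wast;\widetilde{H}^2_{g(t)})$ regularity (truncation preserves the smallness bounds on $A_p-I_2$, $c_p-1$; then Lemma~\ref{th:embedding} and the argument of Theorem~\ref{th:well_posed_continuous} apply verbatim to $w^L$) fills in a step the paper leaves implicit.
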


Now we are prepared to apply the finite element method to the numerical simulation for the scattering problems. We apply the discretization technique to the variational problem \eqref{eq:var_Bloch_modi_finite}. In the next section, we will introduce the Galerkin method for the numerical solution, and investigate the error estimation of the method.

\section{The finite element method}

In this section, we describe the numerical method to solve the variational problem \eqref{eq:var_Bloch_modi_finite}. The Galerkin method based on the Bloch transform has been applied to numerical solutions of the scattering problems with (locally perturbed) periodic background in \cite{Lechl2016a,Lechl2016b,Lechl2017}. It is also extended to rough surface scattering problems in \cite{Zhang2018c}. In this section, we apply the method to the solution of \eqref{eq:var_Bloch_modi} again, and a high order convergence rate could be obtained for special cases.

We still denote by $\left\{\phi_M^{(\ell)}\right\}^M_{\ell=1}$ the periodic, piecewise linear and globally continuous basis in $\Omega^\Lambda_H$ and $\left\{\psi_N^{(j)}\right\}^N_{j=1}$ the basis in $\Wast$. Let $V_h:=\left\{\phi_M^{(\ell)}\right\}^M_{\ell=1}$, then $V_h\subset \widetilde{H}^1_0(\Omega^\Lambda_H)$. Note that the variational problem \eqref{eq:var_Bloch_modi_finite} is equivalent to find $\widetilde{w}^L\in L^2(\Wast;\widetilde{H}^1_{g(t)}(\Omega^\Lambda_H))$ such that
\begin{equation}\label{eq:var_Bloch_finite_equiv}
a_0(\widetilde{w}^L,\widetilde{z})+b^L_0(\widetilde{w}^L,\widetilde{z})=\left<\widetilde{F},\widetilde{z}\right>
\end{equation}
for any $\widetilde{z}(t,\cdot)=z(g(t),\cdot)$ with $z\in L^2(\Wast;\widetilde{H}^1_\alpha(\Omega^\Lambda_H))$, where
\begin{eqnarray}
&& a_0(\widetilde{w}^L,\widetilde{z})=\int_\Wast a_{g(t)}(\widetilde{w}(t,\cdot),\widetilde{z}(t,\cdot))\d t;\\
&& b_0^L(\widetilde{w}^L,\widetilde{z})=\widetilde{b}^L(\widetilde{w}^L,\widetilde{z}(t,\cdot)g'(t)).
\end{eqnarray} 

We look for the solution of \eqref{eq:var_Bloch_finite_equiv} in the following finite element space
\begin{equation}
X_{N,h}=\left\{v_{N,h}(t,x)=e^{-\i g(t) x_1}\sum_{j=1}^N\sum_{\ell=1}^M v_{N,h}^{(j,\ell)}\psi_N^{(j)}(t)\phi_M^{(\ell)}(x):\,v_{N,h}^{(j,\ell)}\in\C\right\},
\end{equation}
then it is a subspace of $\left\{\phi(g(t),\cdot):\,\phi\in L^2(\Wast;\widetilde{H}^1_\alpha(\Omega^\Lambda_H))\right\}$.

 Let $w_{N,h}^L$ be the approximation of $w^L(g(t),\cdot)$ in $X_{N,h}$, then it is easy to check that
\begin{equation}
\begin{aligned}
\|w_{N,h}^L(t,\cdot)g'(t)-\widetilde{w}^L\|_{L^2(\Wast;\widetilde{H}^1_{g(t)}(\Omega^\Lambda_H))}&=\int_\Wast \|w_{N,h}^L(t,\cdot)-{w}^L(g(t),\cdot)\|_{L^2(\Wast;\widetilde{H}^1_{g(t)}(\Omega^\Lambda_H))}g'(t)\d t\\
&\leq C\int_\Wast \|w_{N,h}^L(t,\cdot)-{w}^L(g(t),\cdot)\|_{L^2(\Wast;\widetilde{H}^1_{g(t)}(\Omega^\Lambda_H))}\d t\\
&\leq C\|w_{N,h}^L(t,\cdot)-w^L(g(t),\cdot)\|_{L^2(\Wast;H^1(\Omega^\Lambda_H))}.
\end{aligned}
\end{equation}
When $w\in H_0^{n+1}(\Wast;\widetilde{H}^1_\alpha(\Omega^\Lambda_H))$ and $g$ satisfies Assumption \ref{asp:g}, from the proof of Theorem \ref{th:well_posed_continuous}, $w(g(t),\cdot)\in C_0^{n,1/2}(\Wast;\widetilde{H}^1_{g(t)}(\Omega^\Lambda_H))$. From theorem \ref{th:interp_qper}, together with \eqref{eq:approx}, when $w\in H_0^{n+1}(\Wast;\widetilde{H}^2_\alpha(\Omega^\Lambda_H))$,  the interpolation $w_{N,h}$ satisfies
\begin{equation}
\|w_{N,h}^L-w^L(g(t),\cdot)\|_{L^2(\Wast;H^1(\Omega^\Lambda_H))}\leq C\left(N^{-n}+h\right)\|\widetilde{w}^L(g(t),\cdot)\|_{C_0^{n,1/2}(\Wast;H^2(\Omega^\Lambda_H))}.
\end{equation}
This implies that
\begin{equation}
\begin{aligned}
\|w_{N,h}^L(t,\cdot)g'(t)-\widetilde{w}^L\|_{L^2(\Wast;\widetilde{H}^1_{g(t)}(\Omega^\Lambda_H))}\leq &C\|w_{N,h}^L(t,\cdot)g'(t)-\widetilde{w}^L\|_{L^2(\Wast;\widetilde{H}^1(\Omega^\Lambda_H)}\\
\leq &C\left(N^{-n}+h\right)\|\widetilde{w}^L(g(t),\cdot)\|_{C_0^{n,1/2}(\Wast;H^2(\Omega^\Lambda_H))}.
\end{aligned}
\end{equation}

Let
\begin{equation}
w^L_{N,h}(t,x):=e^{-\i g(\alpha_N^{(j)}) x_1}\sum_{j=1}^N\sum_{\ell=1}^M w_{N,h}^{(j,\ell)}\psi_N^{(j)}(t)\phi_M^{(\ell)}(x),
\end{equation}
then it satisfies the finite discrete variational problem
\begin{equation}\label{eq:var_Bloch_disc}
\begin{aligned}
a_{g(t)}(w^L_{N,h},z_{N,h})
&+\int_{\Omega^\Lambda_H}\left[\widetilde{\J}_{\Omega_H}\left[(A^L_p-I_2)\grad\left(\widetilde{\J}_{\Omega_H}^{-1}w^L_{N,h}\right)\right]\cdot\grad\overline{z_{N,h}}\right]\d x\\
&-k^2\int_{\Omega^\Lambda_H}\left[\widetilde{\J}_{\Omega_H}\left[(c_p^L-1)\left(\widetilde{\J}_{\Omega_H}^{-1}w^L_{N,h}\right)\right]\overline{z_{N,h}}\right]\d x=\left<F(g(t),\cdot),z_{N,h}\right>,
\end{aligned}
\end{equation}
for any $z_{N,h}\in X_{N,h}$. Following the analysis of the finite element method in \cite{Lechl2017}, we can finally obtain the convergence of the numerical method.

\begin{theorem}\label{th:converge_final}
Suppose $u^i\in H_{n+1}^2(\Omega^p_H)$ for $n\in\N$. Moreover, all the conditions in Theorem \ref{th:final_well_posed} are satisfied. Then there is a unique solution $w_{N,h}^L$ for the variational problem \eqref{eq:var_Bloch_disc}. The error between the numerical solution and the exact one $\widetilde{w}$ is bounded by
\begin{equation}
\|w^L_{N,h}g'(t)-\widetilde{w}\|_{L^2(\Wast;\widetilde{H}^\ell_{g(t)}(\Omega^\Lambda_H))}\leq C\left(h^{1-\ell}N^{-n}+h^{2-\ell}+|L\Lambda|^{-n-1}\right)\|\widetilde{w}\|_{C^{n,1/2}_0(\Wast;\widetilde{H}^2_{g(t)}(\Omega^\Lambda_H))}
\end{equation}
for $\ell=0,1$.
\end{theorem}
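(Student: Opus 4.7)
The plan is to decompose the total error by the triangle inequality
\begin{equation*}
\|w^L_{N,h}g'(t)-\widetilde{w}\|_{L^2(\Wast;\widetilde{H}^\ell_{g(t)}(\Omega^\Lambda_H))} \;\leq\; \|w^L_{N,h}g'(t)-\widetilde{w}^L\|_{L^2(\Wast;\widetilde{H}^\ell_{g(t)}(\Omega^\Lambda_H))} + \|\widetilde{w}^L-\widetilde{w}\|_{L^2(\Wast;\widetilde{H}^\ell_{g(t)}(\Omega^\Lambda_H))}
\end{equation*}
and to attack the two pieces separately. The finite-section piece is handled directly by Theorem \ref{th:cutoff_err}: under the hypothesis $u^i\in H^2_{n+1}(\Omega^p_H)$, Theorem \ref{th:final_well_posed} gives $\widetilde w\in H_0^{n+1}(\Wast;\widetilde H^2_{g(t)}(\Omega^\Lambda_H))$, and plugging $n+1$ into Theorem \ref{th:cutoff_err} produces the $|L\Lambda|^{-n-1}$ contribution. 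A stability estimate for \eqref{eq:var_Bloch_modi_finite}, uniform in $L$, further yields $\|\widetilde w^L\|_{C^{n,1/2}_0(\Wast;\widetilde H^2_{g(t)})}\leq C\|\widetilde w\|_{C^{n,1/2}_0(\Wast;\widetilde H^2_{g(t)})}$, which lets us eventually replace $\widetilde w^L$ by $\widetilde w$ on the right.

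For the Galerkin piece, I would first establish unique solvability of \eqref{eq:var_Bloch_disc} and a C\'ea-type quasi-optimality bound, following Theorem 9 of \cite{Lechl2017}. The sesquilinear form $\widetilde a+\widetilde b^L$ satisfies a G\aa rding inequality: $\widetilde a$ decomposes into a coercive principal part plus a compact $L^2$-perturbation after applying the change of variable $\alpha=g(t)$, while $\widetilde b^L$ is small in the operator norm whenever $\|\zeta_p-\zeta\|_{W^{1,\infty}}$ is small. A standard Schatz-type argument then gives a uniform discrete inf-sup condition on $X_{N,h}$ for $N$ large and $h$ small, yielding
\begin{equation*}
\|w^L_{N,h}g'(t)-\widetilde{w}^L\|_{L^2(\Wast;\widetilde{H}^1_{g(t)}(\Omega^\Lambda_H))}\;\leq\; C\inf_{z_{N,h}\in X_{N,h}}\|\widetilde{w}^L-z_{N,h}(t,\cdot)g'(t)\|_{L^2(\Wast;\widetilde{H}^1_{g(t)}(\Omega^\Lambda_H))}.
\end{equation*}

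The best-approximation error is then bounded by combining the trigonometric interpolation estimate of Theorem \ref{th:interp_qper} with $\gamma=1/2$ (which contributes $N^{-n}$) and the classical piecewise-linear FEM estimate \eqref{eq:error_fem} (which contributes $h$), as summarized in \eqref{eq:approx}. This delivers the bound $C(N^{-n}+h)\|\widetilde w^L\|_{C_0^{n,1/2}(\Wast;\widetilde H^2_{g(t)})}$ and settles the case $\ell=1$. For $\ell=0$ I would run an Aubin--Nitsche duality argument: introduce the adjoint problem with right-hand side equal to the residual $w^L_{N,h}g'(t)-\widetilde w^L$, use the $H^2$-regularity given by Theorem \ref{th:final_well_posed} for its solution, and exploit Galerkin orthogonality together with the interpolation estimates to gain one extra power of $h$, producing the $hN^{-n}+h^2$ factor.

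The main technical obstacle will be verifying the uniform discrete inf-sup / G\aa rding decomposition after the change of variables. The weight $g'(t)$ vanishes at points of $\S$, so one must check that the degeneracies in the weighted inner products $\left<\cdot,\cdot\right>_{L^2(\Wast;H^s_{g(t)})}$ do not destroy coercivity and that the compact perturbation remains compact in the weighted setting, so that the Schatz-type passage from the continuous inf-sup to the discrete inf-sup on $X_{N,h}$ is uniform in $N$, $h$, and $L$. Once this step is secured, the three error contributions $h^{1-\ell}N^{-n}$, $h^{2-\ell}$, and $|L\Lambda|^{-n-1}$ combine into the claimed rate, exactly mirroring the argument of \cite{Lechl2017} but with the regularity supplied by Theorem \ref{th:final_well_posed} and the finite-section estimate of Theorem \ref{th:cutoff_err}.
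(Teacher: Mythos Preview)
Your proposal is correct and follows essentially the same approach as the paper: split the error by the triangle inequality into the finite-section contribution (handled by Theorem \ref{th:cutoff_err} with index $n+1$) and the Galerkin contribution (handled by the approximation estimates leading to \eqref{eq:approx} together with the C\'ea/Schatz/Aubin--Nitsche machinery of \cite{Lechl2017}). The paper's own proof is considerably terser---it simply invokes ``the arguments above'' and Theorem \ref{th:cutoff_err} and then applies the triangle inequality---so your write-up is, if anything, more explicit about the underlying steps (in particular the duality argument needed for $\ell=0$).
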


\begin{proof}
From the arguments above, as $w_{N,h}^L(t,\cdot)g'(t)$ is the solution \eqref{eq:var_Bloch_modi_finite}, $\|w_{N,h}^L(t,\cdot)g'(t)-\widetilde{w}^L\|_{L^2(\Wast;\widetilde{H}^1_{g(t)}(\Omega^\Lambda_H))}\leq C\left(N^{-n}+h\right)\|\widetilde{w}^L(g(t),\cdot)\|_{C_0^{n,1/2}(\Wast;H^2(\Omega^\Lambda_H))}$.
From Theorem \ref{th:cutoff_err},the relative error of $\widetilde{w}^L$ is bounded by $\|\widetilde{w}^L-\widetilde{w}\|_{L^2(\Wast;\widetilde{H}^1_{g(t)}(\Omega^\Lambda_H))}\leq C|L\Lambda|^{-n-1}\|\widetilde{w}\|_{H_0^{n+1}(\Wast;\widetilde{H}^2_{g(t)}(\Omega^\Lambda_H))}$. The proof is finished by the triangular inequality.

\end{proof}

The approximated total field, denoted by $u^L_{N,h}$ is defined as
\begin{equation}
u^L_{N,h}\left(x+\left(\begin{matrix}
\Lambda j\\0
\end{matrix}\right)\right)=\left(\widetilde{\J_{\Omega_H}^{-1}}w^L_{N,h}\right)\left(x+\left(\begin{matrix}
\Lambda j\\0
\end{matrix}\right)\right).
\end{equation}
Then the relative error of $u^L_{N,h}$ is concluded in the following corollary.

\begin{corollary}\label{th:converge_total}
The numerical solution of the total field is bounded by
\begin{equation}
\|u^L_{N,h}-u_T\|_{H^\ell(\Omega_H)}\leq C\left(h^{1-\ell}N^{-n}+h^{2-\ell}+|L\Lambda|^{-n-1}\right)\|\widetilde{w}\|_{C^{n,1/2}_0(\Wast;\widetilde{H}^2_{g(t)}(\Omega^\Lambda_H))}.
\end{equation}
\end{corollary}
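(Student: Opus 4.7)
The plan is to pull the estimate back from the total-field side to the Bloch-transform side by applying the boundedness of the modified inverse Bloch transform, and then invoke Theorem \ref{th:converge_final}.

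First I would identify the exact transformed total field with $u_T = \widetilde{\J}_{\Omega_H}^{-1}\widetilde{w}$. Indeed, performing the change of variable $\alpha = g(t)$ inside the defining formula of $\J_{\Omega_H}^{-1}$ absorbs the Jacobian $g'(t)$ into the relation $\widetilde{w}(t,\cdot)=w(g(t),\cdot)g'(t)$, and produces exactly the formula defining $\widetilde{\J}_{\Omega_H}^{-1}$ with the modified exponential $e^{\i g(t)\Lambda j}$. Under the same identification, the definition of $u^L_{N,h}$ given above corresponds to $\widetilde{\J}_{\Omega_H}^{-1}\bigl(w^L_{N,h}\,g'(t)\bigr)$, which is consistent with Theorem \ref{th:converge_final} since that theorem estimates $w^L_{N,h}\,g'(t)$ against $\widetilde{w}$. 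Consequently, subtracting the two representations gives
\begin{equation*}
u^L_{N,h} - u_T \;=\; \widetilde{\J}_{\Omega_H}^{-1}\bigl(w^L_{N,h}\,g'(t) - \widetilde{w}\bigr).
\end{equation*}

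Next I would apply Corollary \ref{th:prop_modi_bloch} with $n'=0$ and $s=\ell$, which gives the continuity of $\widetilde{\J}_{\Omega_H}^{-1} : L^2(\Wast; H^{\ell}_{g(t)}(\Omega^\Lambda_H)) \to H^{\ell}(\Omega_H)$. This yields
\begin{equation*}
\|u^L_{N,h} - u_T\|_{H^\ell(\Omega_H)} \;\leq\; C\,\bigl\|w^L_{N,h}\,g'(t) - \widetilde{w}\bigr\|_{L^2(\Wast;\widetilde{H}^{\ell}_{g(t)}(\Omega^\Lambda_H))}.
\end{equation*}
Plugging in the bound supplied by Theorem \ref{th:converge_final} immediately produces the claimed rate $h^{1-\ell}N^{-n} + h^{2-\ell} + |L\Lambda|^{-n-1}$ for both $\ell=0$ and $\ell=1$.

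The only real obstacle is the $g'(t)$ bookkeeping: one must confirm that the $u^L_{N,h}$ displayed above coincides with $\widetilde{\J}_{\Omega_H}^{-1}(w^L_{N,h}\,g'(t))$, so that the difference on the total-field side is the image under $\widetilde{\J}_{\Omega_H}^{-1}$ of precisely the quantity controlled in Theorem \ref{th:converge_final}. Once this identification is in place, the corollary reduces to a one-line composition of the operator bound from Corollary \ref{th:prop_modi_bloch} with the convergence estimate already established, and no further analysis is required.
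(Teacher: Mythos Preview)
Your proposal is correct and follows essentially the same approach as the paper: identify $u^L_{N,h}-u_T$ as the image under $\widetilde{\J}_{\Omega_H}^{-1}$ of $w^L_{N,h}\,g'(t)-\widetilde{w}$, invoke the boundedness of $\widetilde{\J}_{\Omega_H}^{-1}$ from Corollary \ref{th:prop_modi_bloch} with $n'=0$, and then apply Theorem \ref{th:converge_final}. Your extra care in verifying the $g'(t)$ bookkeeping (so that the difference on the physical side is exactly the image of the quantity bounded in Theorem \ref{th:converge_final}) is a point the paper's proof glosses over, but the argument is otherwise identical.
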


\begin{proof}
From Theorem \ref{th:converge_final}, the error of $w^L_{N,h}g'(t)$ is bounded by
\begin{equation}
\|w^L_{N,h}g'(t)-\widetilde{w}\|_{L^2(\Wast;\widetilde{H}^\ell_{g(t)}(\Omega^\Lambda_H))}\leq C\left(hN^{-n}+h^2+|L\Lambda|^{-n-1}\right)\|\widetilde{w}\|_{C^{n,1/2}_0(\Wast;\widetilde{H}^2_{g(t)}(\Omega^\Lambda_H))}.
\end{equation}
From Corollary \ref{th:prop_modi_bloch}, $\widetilde{\J}_{\Omega_H}^{-1}$ is bounded from $L^2(\Wast;H^\ell_{g(t)}(\Omega^\Lambda_H))$ to $H^\ell(\Omega_H)$. Then
\begin{equation}
\begin{aligned}
\|u^L_{N,h}-u_T\|_{H^\ell(\Omega_H)}&=\|\widetilde{\J}_{\Omega_H}^{-1}\left(w^L_{N,h}g'(t)-\widetilde{w}\right)\|_{H^\ell(\Omega_H)}\\
&\leq C\|w^L_{N,h}g'(t)-\widetilde{w}\|_{L^2(\Wast;H^\ell_{g(t)}(\Omega_H^\Lambda))}.
\end{aligned}
\end{equation}
The proof is finished from the result in Theorem \ref{th:converge_final}.
\end{proof}

\section{Numerical results}

In this paper, we present several numerical experiments to show that the convergence rate of algorithm could reach the results in Corollary \ref{th:converge_total}. We fix $n=5$, and choose $g$ as in the Appendix. The period $\Lambda$ is fixed as $2\pi$ thus $\Lambda^*=1$. The parameters are chosen as follows
\begin{equation}
H=3;\,H_0=2.9;\, 
\end{equation}
In the numerical examples, we fix $L=N/2$. The incident field is always the downward propagating Herglotz wave function
\begin{equation}
u^i(x)=\int_{-\pi/2}^{\pi/2}e^{\i k (x_1\sin t-x_2\cos t)}h(t)\d t
\end{equation}
where
\begin{equation}
h(t)=\widetilde{h}(t)/\|h\|_\infty\text{ for }h(t)=\begin{cases}
(t-0.5)^6(t-1.3)^6\quad\text{ when }0.5<t<1.3;\\
(t+0.5)^6(t+1.3)^6\quad\text{ when }-1.3<t<-0.5;\\
0\quad\text{ otherwise}.
\end{cases}
\end{equation}
From \cite{Lechl2015e}, the Bloch transform of the incident field has the form of
\begin{equation}
\left(\J_{\Omega}u^i\right)(\alpha,x)=\sum_{j\in\Z:\,|j-\alpha|<k}e^{\i (j-\alpha)x_1-\i\sqrt{k^2-|j-\alpha|^2}\,x_2}\frac{h\left[\arcsin((j-\alpha)/k)\right]}{\sqrt{k^2-|j-\alpha|^2}}.
\end{equation}
For all the examples in this section, we choose three different $k$'s, i.e., $k=1,\sqrt{2},1.5$. It is easy to check that for any of these $k$'s, the Bloch transform of $u^i$ vanishes in the neighborhood of $\S$, which is the set depends on $k$.\\

\noindent {\bf Example 1. }
The first group contains two examples. The periodic surface is a straight line $\R\times\{1\}$ and the perturbation is also a straight line $\R\times\{1.1\}$, i.e., $\zeta(t)=1$ and $\zeta_p(t)=1.1$. In this case, the total field is easily obtained, i.e.,
\begin{equation}
u(x)=\int_{-\pi/2}^{\pi/2}\left[e^{\i k (x_1\sin t-x_2\cos t)}-e^{\i k (x_1\sin t+(x_2-2.2)\cos t)}\right]h(t)\d t.
\end{equation}
We fix $h=0.025$ for the numerical experiments, thus the error caused by this parameter is about $O(10^{-4})$. For $N=4,8,16,32,64$, we compute the total field $u^L_{N,h}$ and the relative error
\begin{equation}
\frac{\left\|u^L_{N,h}-u\right\|_{L^2(\Gamma^\Lambda_H)}}{\|u\|_{L^2(\Gamma^\Lambda_H)}}.
\end{equation}
The errors are listed in Table \ref{eg1}. From the table we can find that the relative error decays very fast at first. But when it reaches $O(10^{-4})$, the relative becomes stable. This implies that the errors brought by $N$ and $L$ are minor compared to that brought by $h$. 

\begin{table}[htb]
\centering
\begin{tabular}
{|p{1.8cm}<{\centering}||p{2cm}<{\centering}|p{2cm}<{\centering}
 |}
\hline
  & $k=1$ & $k=\sqrt{2}$\\
\hline
\hline
$N=8$&$3.77$E$-01$&$2.38$E$-01$\\
\hline
$N=16$&$9.77$E$-03$&$3.20$E$-04$\\
\hline
$N=32$&$9.92$E$-05$&$1.25$E$-04$\\
\hline
$N=64$&$9.54$E$-05$&$1.25$E$-04$\\
\hline
\end{tabular}
\caption{Relative $L^2$-errors for Example 1.}\label{eg1}
\end{table}

\noindent {\bf Example 2. }The second group contains the examples with
\begin{eqnarray}
&&\zeta(t)=1.5+\sin t/3-\cos(2t)/4;\\
&&p(t)=\sin(2t)/20+\sin(\pi t+0.1)/20.
\end{eqnarray}
The surface $\Gamma$ is shown in Figure \ref{fig:periodic} and the graph of $p$ is shown in Figure \ref{fig:p2}.
In this case, the total fields no longer have analytical representations, thus we could only use numerical results with fine enough meshes to approximate the exact functions. For the numerical results, we fix $h=0.1$. For the "exact" solution, we set $N=256$, while for examples, we choose $N=8,16,32,64$. The relative errors are listed in Example \ref{eg2}.

\begin{figure}[htb]
\centering
\includegraphics[width=1\textwidth]{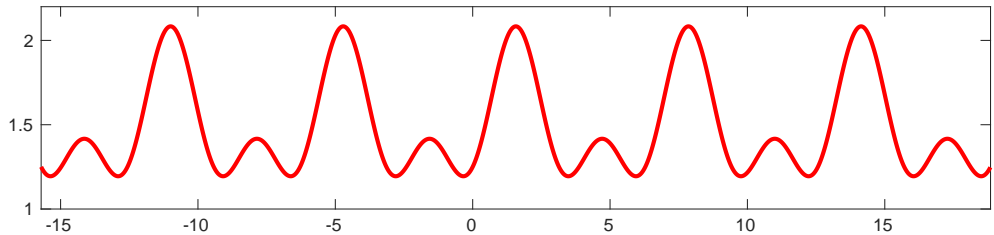} 
\caption{Periodic surface $\Gamma$.}
\label{fig:periodic}
\end{figure}

\begin{figure}[htb]
\centering
\includegraphics[width=1\textwidth]{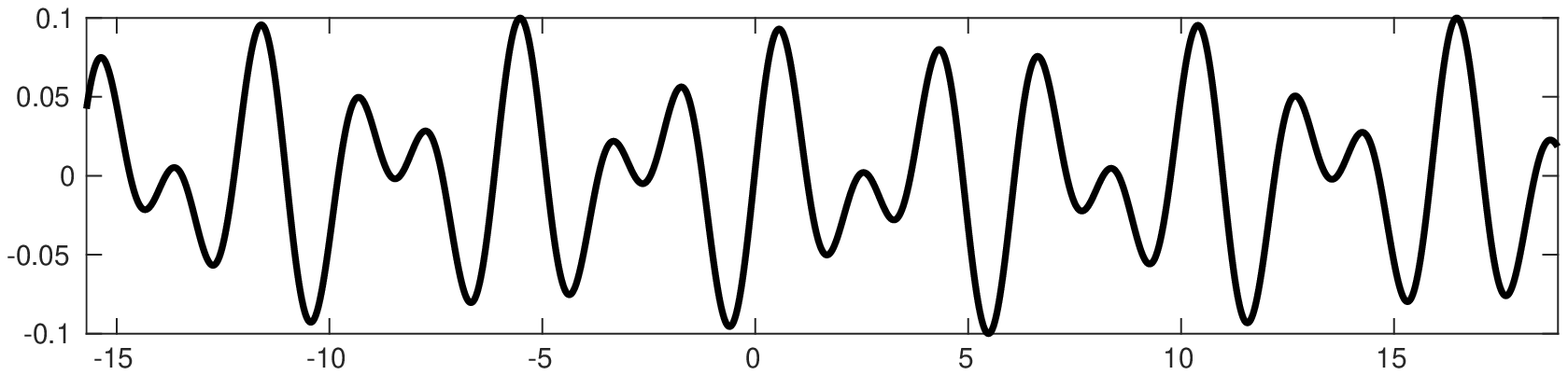} 
\caption{Graph of $p$ in Example 2.}
\label{fig:p2}
\end{figure}

\begin{table}[htb]
\centering
\begin{tabular}
{|p{1.8cm}<{\centering}||p{2cm}<{\centering}|p{2cm}<{\centering}
 |}
\hline
  & $k=1$ & $k=\sqrt{2}$\\
\hline
\hline
$N=8$&$3.71$E$-01$&$7.65$E$-01$\\
\hline
$N=16$&$1.16$E$-02$&$2.09$E$-01$\\
\hline
$N=32$&$4.17$E$-05$&$3.95$E$-04$\\
\hline
$N=64$&$3.50$E$-08$&$8.50$E$-07$\\
\hline
\end{tabular}
\caption{Relative $L^2$-errors for Example 2.}\label{eg2}
\end{table}

\noindent {\bf Example 3. }The second group contains the examples with
\begin{eqnarray}
&&\zeta(t)=1.5+\sin t/3-\cos(2t)/4;\\
&&p(t)=\sin\left[(4+t^2)^{1/3}\right]/20.
\end{eqnarray}
The graph of $p$ is shown in Figure \ref{fig:p3}.
We use the same parameters as Example 2, except for $k=1,1.5$. The relative errors are listed in Example \ref{eg3}.

\begin{figure}[htb]
\centering
\includegraphics[width=1\textwidth]{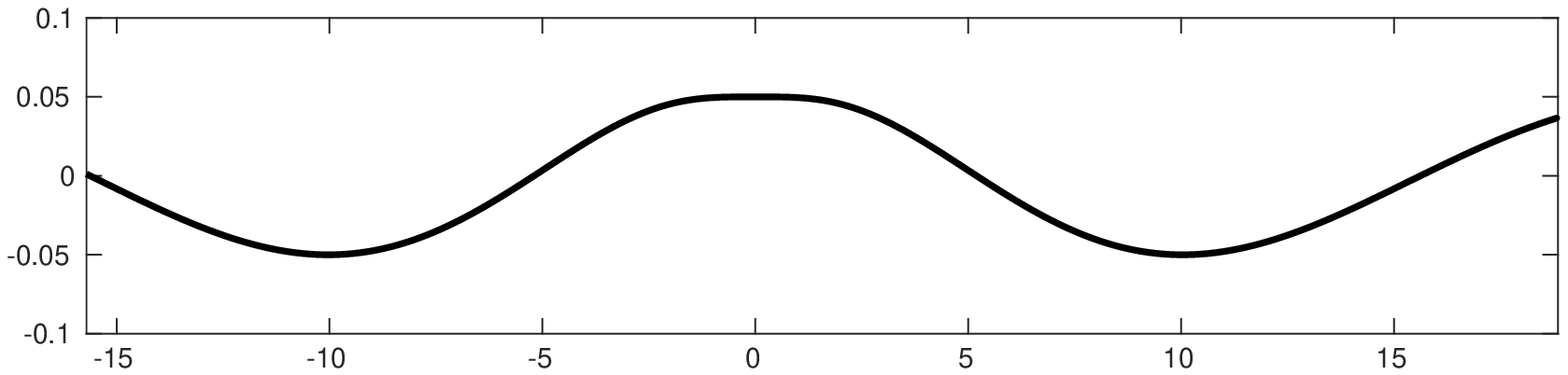} 
\caption{Graph of $p$ in Example 3.}
\label{fig:p3}
\end{figure}

\begin{table}[htb]
\centering
\begin{tabular}
{|p{1.8cm}<{\centering}||p{2cm}<{\centering}|p{2cm}<{\centering}
 |}
\hline
  & $k=1$ & $k=1.5$\\
\hline
\hline
$N=8$&$3.74$E$-01$&$3.86$E$-01$\\
\hline
$N=16$&$1.14$E$-02$&$9.53$E$-04$\\
\hline
$N=32$&$4.21$E$-05$&$1.07$E$-06$\\
\hline
$N=64$&$3.59$E$-08$&$1.23$E$-08$\\
\hline
\end{tabular}
\caption{Relative $L^2$-errors for Example 3.}\label{eg3}
\end{table}

The logarithmic scale of the relative $L^2$-errors for Example 2 and 3. The slopes for the examples are roughly about $-8$ to $-9$, which means that the numerical results converges at least at the rate of $N^{-8}$. However, as $n=5$, from Theorem \ref{th:converge_final} is about $O(N^{-5})$. The convergence rate of the numerical results is even greater than expected. 

\begin{figure}[htb]
\centering
\begin{tabular}{c c}
\includegraphics[width=0.4\textwidth]{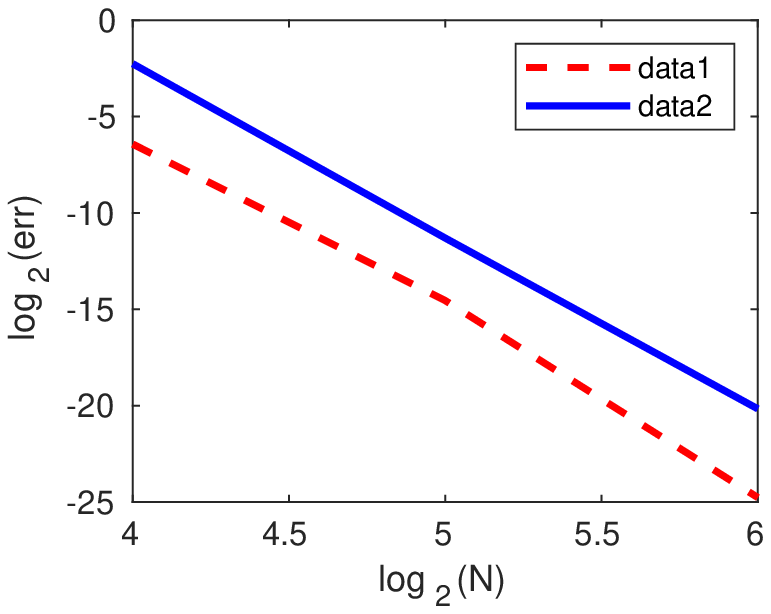} 
& \includegraphics[width=0.4\textwidth]{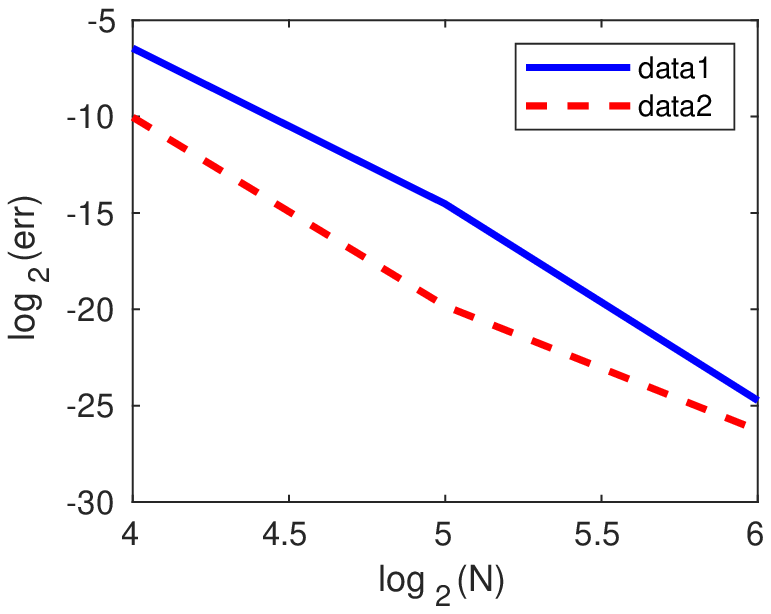}\\[-0cm]
(a) & (b)  
\end{tabular}%
\caption{(a): Example 2; (b): Example 3.}
\label{fig:surface}
\end{figure}

\section*{Appendix}

\subsection*{ The Floquet-Bloch transform}

The main tool used in this paper is the Floquet-Bloch transform. In this section, we will recall the definition and some basic properties of the Bloch transform in periodic domains in $\R^d$ (for details see \cite{Lechl2016}).

Suppose $\Omega\subset\R^d$ is $\Lambda$-periodic in $\widetilde{x}$-direction, i.e., for any $\widetilde{x}^\top=(\widetilde{x}^\top,x_d)\in\Omega$, the translated point $x+\left(\begin{smallmatrix}\Lambda j\\0 \end{smallmatrix}\right)\in\Omega,\,\forall{ j}\in\Z$. 
Define one periodic cell by $\Omega^\Lambda:=\Omega\cap\left[\W\times\R\right]$. For any $\phi\in C_0^\infty(\Omega)$, define the (partial)  Bloch transform in $\Omega$, i.e., $\J_{\Omega}$, of $\phi$ as
\begin{equation*}
\left(\J_\Omega\phi\right)({\alpha},{ x})=C_\Lambda\sum_{{ j}\in\Z^{d-1}}\phi\left({ x}+\left(\begin{matrix}
\Lambda { j}\\0
\end{matrix}\right)\right)e^{-\i{\alpha}\cdot\Lambda{ j}},\quad {\alpha}\in\R ,\,{ x}\in\Omega^\Lambda
\end{equation*}
where $C_\Lambda=\sqrt{\frac{|\Lambda|}{2\pi}}$. The Bloch transform is well-defined for any smooth function with compact support, and could be extended to more generalized Sobolev spaces.

\begin{remark}
The periodic domain $\Omega$ is not required to be bounded in $x_d$-direction.  
\end{remark}

We can also define the weighted Sobolev  space on the unbounded domain $\Omega$ by
\begin{equation*}
H_r^s(\Omega):=\left\{\phi\in \mathcal{D}'(\Omega):\,(1+|{ x}|^2)^{r/2}\phi({ x})\in H^s(\Omega)\right\}.
\end{equation*}
For any $\ell\in\N$, $s\in\R$, we can also define the following Hilbert space by
\begin{equation*}
H^\ell(\Wast;H^s(\Omega^\Lambda)):=\left\{\psi\in\mathcal{D}'(\Wast\times\Omega^\Lambda):\,\sum_{m=0}^\ell\int_\Wast\left\|\partial^m_{\alpha}\psi({\alpha},\cdot)\right\|\d{\alpha}<\infty\right\},
\end{equation*}
and extend the index $\ell\in\N$ to any $r\in\R$ by interpolation and duality arguments. The space $H_0^r(\Wast;H_\alpha^s(\Omega^\Lambda))$ is the subspace of $H^\ell(\Wast;H^s(\Omega^\Lambda))$ such that all functions in the space is periodic with respect to $\alpha$ and quasi-periodic with respect to the second variable. The following properties for the $d$-dimensional (partial) Bloch transform $\J_\Omega$ is also proved in \cite{Lechl2016}.

\begin{theorem}\label{th:Bloch_property}
The Bloch transform $\J_\Omega$ extends to an isomorphism between $H_r^s(\Omega)$ and $H_0^r(\Wast;H_\alpha^s(\Omega^\Lambda))$ for any $s,r\in\R$. Its inverse has the form of
\begin{equation*}
(\J^{-1}_\Omega\psi)\left({ x}+\left(\begin{matrix}
\Lambda { j}\\0
\end{matrix}\right)\right)=C_\Lambda\int_\Wast \psi({\alpha},{ x})e^{\i{\alpha}\cdot\Lambda{ j}}\d{\alpha},\quad x_1\in\Omega^\Lambda,\,{ j}\in\Z,
\end{equation*}
and the adjoint operator $\J^*_\Omega$ with respect to the scalar product in $L^2(\Wast;L^2(\Omega^\Lambda))$ equals to the inverse $\J^{-1}_\Omega$. Moreover, when $r=s=0$, the Bloch transform $\J_\Omega$ is an isometric isomorphism.
\end{theorem}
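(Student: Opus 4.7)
I would organize the proof around first pinning down the base case $r=s=0$ and then bootstrapping in the two regularity indices separately, using the fact that $s$ controls spatial regularity in $x\in\Omega^\Lambda$ while $r$ controls (via weights on $\Omega$) $\alpha$-regularity in $\Wast$.

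First I would treat $r=s=0$. For $\phi\in C_c^\infty(\Omega)$ write $\phi_j(x):=\phi\big(x+(\Lambda j,0)^\top\big)$ for $x\in\Omega^\Lambda$, $j\in\Z$. Because $\Omega$ is $\Lambda$-periodic in $\tilde x$, the $\{\phi_j\}_{j\in\Z}\subset L^2(\Omega^\Lambda)$ is an $\ell^2$-sequence whenever $\phi\in L^2(\Omega)$, and by the disjoint-union structure
\[
\|\phi\|_{L^2(\Omega)}^2=\sum_{j\in\Z}\|\phi_j\|_{L^2(\Omega^\Lambda)}^2.
\]
The definition $(\J_\Omega\phi)(\alpha,x)=C_\Lambda\sum_{j}\phi_j(x)e^{-\i\alpha\cdot\Lambda j}$ is, pointwise in $x$, a rescaled Fourier series in $\alpha\in\Wast$. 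Parseval's identity on $\Wast$ therefore yields $\|\J_\Omega\phi\|_{L^2(\Wast\times\Omega^\Lambda)}=\|\phi\|_{L^2(\Omega)}$, establishing the isometry for smooth compactly supported $\phi$ and, by density, for all $\phi\in L^2(\Omega)$. The inverse formula and the $\alpha$-quasi-periodicity in $x$ of $\J_\Omega\phi$ follow from standard Fourier inversion, and $\J_\Omega^*=\J_\Omega^{-1}$ comes immediately from the isometry and surjectivity.

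Next I would raise the spatial index $s$. The Bloch transform commutes with derivatives with respect to $x$ (shifts in $\tilde x$ only touch the discrete index $j$), so for $s=m\in\N$ and any smooth $\phi$,
\[
\|\J_\Omega\phi\|_{L^2(\Wast;H^m_\alpha(\Omega^\Lambda))}^2=\sum_{|\beta|\le m}\|\partial^\beta\phi\|_{L^2(\Omega)}^2=\|\phi\|_{H^m(\Omega)}^2,
\]
using the $r=s=0$ identity on each $\partial^\beta\phi$ and the fact that $\J_\Omega\partial^\beta\phi$ is $\alpha$-quasi-periodic so that its $H^m_\alpha$ seminorm equals the $H^m$ seminorm on $\Omega^\Lambda$. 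Non-integer and negative $s$ follow by interpolation and duality. To raise the weighted index $r$, one observes heuristically that multiplication by $\Lambda j$ on the discrete side of the Fourier series corresponds to $\i\partial_\alpha$ on the $\Wast$ side. Making this rigorous for $r=n\in\N_+$ reduces to the identity
\[
\partial_\alpha^n(\J_\Omega\phi)(\alpha,x)=C_\Lambda\sum_{j\in\Z}(-\i\Lambda j)^n\phi_j(x)e^{-\i\alpha\cdot\Lambda j},
\]
which combined with the base case yields $\|\J_\Omega\phi\|_{H^n_0(\Wast;L^2(\Omega^\Lambda))}\simeq\||\Lambda j|^n\phi\|_{L^2(\Omega)}\simeq\|\phi\|_{H^n_n(\Omega)}$ using the equivalence of the weight $(1+|x|^2)^{n/2}$ with $(1+|\tilde x|^2)^{n/2}$ up to the bounded-in-$x_d$ factor (on a bounded strip this is immediate; when $\Omega$ is unbounded in $x_d$, one splits the weight as before, noting that the $x_d$-component is identical on both sides of $\J_\Omega$ and is absorbed into the $L^2(\Omega^\Lambda)$-norm). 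Finally $s$ and $r$ are combined, and fractional/negative values are reached by complex interpolation and duality against the $L^2(\Wast\times\Omega^\Lambda)$ pairing, which is compatible with $\J_\Omega^*=\J_\Omega^{-1}$.

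The main obstacle I would expect is keeping the three simultaneously active structures — weighted $L^2$ in $\Omega$, $\alpha$-quasi-periodic Sobolev in $x\in\Omega^\Lambda$, and periodic Sobolev in $\alpha\in\Wast$ — compatible under interpolation/duality. In particular the norms on $H^s_\alpha(\Omega^\Lambda)$ depend on $\alpha$, and one must check that the identification $\J_\Omega$ sends the family of norms measurably and uniformly so that the vector-valued space $H_0^r(\Wast;H_\alpha^s(\Omega^\Lambda))$ is well-defined and interpolates correctly with the $H_r^s(\Omega)$ scale; once the integer diagonal $r=n,\ s=m$ is secured and the commutation $\partial_x\J_\Omega=\J_\Omega\partial_x$, $\i\partial_\alpha\J_\Omega=\J_\Omega(\Lambda j\cdot)$ identities are verified, the remaining cases should follow from standard abstract interpolation and the duality $\J_\Omega^*=\J_\Omega^{-1}$.
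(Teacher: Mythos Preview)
The paper does not prove this theorem; it is stated in the Appendix with an explicit reference to \cite{Lechl2016} for the proof. So there is nothing in the paper to compare your argument against directly.

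Your outline is the standard route and is essentially what one finds in the cited reference: Parseval on $\Wast$ for the $r=s=0$ isometry, commutation $\J_\Omega\partial_x^\beta=\partial_x^\beta\J_\Omega$ to handle integer $s$, the identification of $\partial_\alpha$ with multiplication by $-\i\Lambda j$ on the Fourier-coefficient side to handle integer $r$ (this is exactly the content of Remark~\ref{rem:Four} in the paper), and then interpolation/duality for the remaining indices. The surjectivity and the inverse formula follow from Fourier inversion on $\Wast$, and $\J_\Omega^*=\J_\Omega^{-1}$ from the isometry.

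One point deserves more care than you give it. Your claim that the full weight $(1+|x|^2)^{n/2}$ is equivalent to $(1+|\tilde x|^2)^{n/2}$ ``up to the bounded-in-$x_d$ factor'' is fine on a strip such as $\Omega_H$, which is the only case the paper actually uses, but your parenthetical handling of the unbounded-in-$x_d$ case is not convincing: if $\Omega$ is genuinely unbounded in $x_d$, the $x_d$-part of the weight cannot simply be ``absorbed into the $L^2(\Omega^\Lambda)$-norm'', since the target space $H_0^r(\Wast;H_\alpha^s(\Omega^\Lambda))$ carries no $x_d$-weight. Either restrict to domains bounded in $x_d$ (as in all applications here), or check that the weighted space in the cited reference is in fact defined with a weight only in the periodic direction. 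Apart from this, your argument is correct.
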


Another important property of the Bloch transform is the commutes with partial derivatives, see \cite{Lechl2016}. If $u\in H_r^n(\Omega)$ for some $n\in\N$, then for any ${ \gamma}=(\gamma_1,\dots,\gamma_d)\in\N^d$ with $|\gamma|=\leq N$,
\begin{equation*}
\partial^{ \gamma}_{  x} \left(\J_\Omega u\right)({ \alpha},{  x})=\J_\Omega[\partial^{ \gamma} u]({ \alpha},{  x}).
\end{equation*}

\begin{remark}
The definition of the partial Bloch transform could also be extended to other periodic domains, for example, periodic hyper-surfaces. If $\Gamma$ is a $\Lambda$-periodic surface defined in $\R^d$, then we can define $\J_\Gamma$ in the same way, and obtain similar properties. In this paper, we will denote the Bloch transform $\J_X$ by the partial Bloch transform in the domain $X\subset\R^d$, which is periodic with respect to $\widetilde{x}$-direction.
\end{remark}

\begin{remark}\label{rem:Four}
There is an alternative definition for the space $H_0^r(\Wast;X_{ \alpha})$, where $X_{ \alpha}$ is a family of Hilbert spaces that are ${ \alpha}$-quasi-periodic in $\widetilde{  x}$. Let 
\begin{equation*}
\phi_{\Lambda^*}^{({  j})}({ \alpha})=C_\Lambda e^{-\i{ \alpha}\cdot\Lambda {  j}},\,{  j}\in\Z
\end{equation*}
be a complete orthonormal system in $L^2(\Wast)$, then any function $\psi\in \mathcal{D}'(\Wast\times\Omega^\Lambda)$ has a Fourier series
\begin{equation*}
\psi({ \alpha},{  x})=C_\Lambda \sum_{{ \ell}\in\Z}\hat{\psi}_{\Lambda^*}({ \ell},{  x})e^{-\i{ \alpha}\cdot\Lambda{ \ell}},
\end{equation*}
where $\hat{\psi}_{\Lambda^*}({ \ell},{  x})=<\psi(\cdot,{  x}),\phi_{\Lambda^*}^{({ \ell})}>_{L^2(\Wast)}$. Then the squared norm of any $\psi\in H_0^r(\Wast;X_{ \alpha})$ equals to
\begin{equation*}
\|\psi\|^2_{H_0^r(\Wast;X_{ \alpha})}=\sum_{{ \ell}\in\Z}(1+|{ \ell}|^2)^r\left\|\hat{\psi}_{\Lambda^*}({ \ell},\cdot)\right\|^2_{X_{ \alpha}}.
\end{equation*}
\end{remark}

\subsection{Cutoff functions}

We look for a smooth cutoff functions $g$ in the interval $[a,b]$ such that
\begin{equation}
g(a)=0,\,g(b)=1,\, g^{(j)}(a)=g^{(j)}(b)=0\text{ for any }j=1,2,\dots,n,
\end{equation}
where $n\in\N^+$ is a positive integer.
We can define the function by
\begin{equation}
g(t)=c\int_a^t (\tau-a)^{n+1}(b-\tau)^{n+1}\d\tau,
\end{equation}
where the constant $c$ is defined by
\begin{equation}
c=\left(\int_a^b (\tau-a)^{n+1}(b-\tau)^{n+1}\d\tau\right)^{-1}.
\end{equation}

If the cutoff function satisfies
\begin{equation}
g(a)=0,\,g(b)=1,\, g^{(j)}(a)=g^{(j)}(b)=0\text{ for any }j=1,2,\dots,\infty,
\end{equation}
we can define 
\begin{equation}
g(t)=c\int_a^t \exp\left[-\frac{1}{((a+b)/2)^2-(\tau-(b-a)/2)^2}\right]\d\tau,
\end{equation}
where
\begin{equation}
c=\left(\int_a^b \exp\left[-\frac{1}{((a+b)/2)^2-(\tau-(b-a)/2)^2}\right]\d\tau\right)^{-1}.
\end{equation}

\bibliographystyle{alpha}
\bibliography{ip-biblio} 

\end{document}